\documentclass[12pt,english]{article}
\usepackage{geometry}
\usepackage{longtable}
\usepackage{booktabs}
\usepackage{multirow}
\usepackage{setspace}
\usepackage[authoryear,round,colon, sort&compress]{natbib}
\bibpunct{(}{)}{,}{autheryear}{,}{,}

\usepackage[english]{babel}
\usepackage{amsmath, amsthm, amssymb, amscd, amsfonts, bm, bbm, mathrsfs}   % math packages
\usepackage{graphicx}
\usepackage{enumerate}
\usepackage[shortlabels]{enumitem}
\usepackage{url} % not crucial - just used below for the URL
\usepackage[colorlinks]{hyperref}
\usepackage{xr-hyper}
\usepackage[capitalize,nameinlink,noabbrev]{cleveref} % to emulate \autoref style
\usepackage[algo2e,linesnumbered,lined,ruled]{algorithm2e}

\usepackage[bottom]{footmisc}
\usepackage[dvipsnames]{xcolor}
\hypersetup{
	breaklinks,
	linkcolor=blue,
	urlcolor=blue,
	anchorcolor=blue,
	citecolor=blue
}

\theoremstyle{plain}

\newtheorem{thm}{Theorem}[section]
\theoremstyle{definition}

\newtheorem*{rem}{Remark}

\theoremstyle{remark}

\RequirePackage{amsthm,amsmath,amsfonts}

\usepackage{multirow}
\usepackage[english]{babel}
\usepackage{amscd, bm, bbm, mathrsfs}   % math packages
\usepackage{graphicx}
\usepackage[shortlabels]{enumitem}
\usepackage[colorlinks]{hyperref}
\usepackage[dvipsnames]{xcolor}
\hypersetup{
	breaklinks,
	linkcolor=blue,
	urlcolor=blue,
	anchorcolor=blue,
	citecolor=blue
}

\allowdisplaybreaks

\newtheorem{lem}{Lemma}[section]
\newtheorem{prop}{Proposition}[section]
\numberwithin{equation}{section}
\numberwithin{figure}{section}
\numberwithin{table}{section}

\newcommand{\bsh}{\backslash} % set difference

\newcommand{\argmin}{\operatornamewithlimits{argmin}} %argmin
\newcommand{\ed}{\overset{\mathsf{d}}{=}} % the same in distribution

\newcommand{\pr}{\mathsf{P}}
\newcommand{\eo}{\mathsf{E}}

\newcommand{\var}{\mathsf{var}}

\newcommand{\nd}{\mathsf{N}}

\newcommand{\ap}{\alpha} %alpha
\newcommand{\g}{\gamma} %gamma
\newcommand{\ga}{\Gamma} %Gamma
\newcommand{\dt}{\delta} % small delta
\newcommand{\Dt}{\Delta} % BIG Delta
\newcommand{\e}{\varepsilon} % epsilon

\newcommand{\ka}{\kappa} % kappa
\newcommand{\s}{\sigma} % sigma
\newcommand{\ld}{\lambda} % small lambda
\newcommand{\B}{\mathbb{B}} % B
\newcommand{\C}{\mathbb{C}} % C
\newcommand{\HH}{\mathbb{H}} % H
\newcommand{\I}{\mathbb{I}} % I
\newcommand{\N}{\mathbb{N}} % N
\newcommand{\R}{\mathbb{R}} % R
\newcommand{\W}{\mathbb{W}} % W
\newcommand{\aaa}{\mathcal{A}}	% A
\newcommand{\bb}{\mathcal{B}}	% B
\newcommand{\dd}{\mathcal{D}}	% D
\newcommand{\ee}{\mathcal{E}}	% E
\newcommand{\jj}{\mathcal{J}}	% J
\newcommand{\pp}{\mathcal{P}}	% P
\newcommand{\rr}{\mathcal{R}}	% R
\newcommand{\sss}{\mathcal{S}}	% S
\newcommand{\ttt}{\mathcal{T}}	% T
\newcommand{\xx}{\mathcal{X}}	% X
\newcommand{\opnorm}[1]{
	{\vert\kern-0.25ex\vert\kern-0.25ex\vert #1 \vert\kern-0.25ex\vert\kern-0.25ex\vert}
}
\newcommand{\opnormbig}[1]{
	{\left\vert\kern-0.25ex\left\vert\kern-0.25ex\left\vert #1 \right\vert\kern-0.25ex\right\vert\kern-0.25ex\right\vert}
}

\newcommand{\opip}[1]{
	{\langle\kern-0.25ex\langle #1 \rangle\kern-0.25ex\rangle}
}
\newcommand{\opipbig}[1]{
	{\left\langle\kern-0.25ex\left\langle #1 \right\rangle\kern-0.25ex\right\rangle}
}

\begin{document}

\newcommand{\thmautorefname}{Theorem}
\newcommand{\defnautorefname}{Definition}
\newcommand{\propautorefname}{Proposition}
\newcommand{\corautorefname}{Corollary}
\newcommand{\lemautorefname}{Lemma}
\newcommand{\remautorefname}{Remark}

\newcommand*{\Appendixautorefname}{Appendix}
\renewcommand{\sectionautorefname}{Section}
\renewcommand{\subsectionautorefname}{Section}
\renewcommand{\subsubsectionautorefname}{Section}

\newcommand{\tred}[1]{{\color{Red}#1}} % textcolor red

\title{Gaussian and bootstrap approximations \\  for functional principal component regression}
\author{
	Hyemin Yeon\thanks{
		Department of Statistics, George Mason University
%		, Email: hyeon2@gmu.edu.
	}
}
\date{}

\maketitle

\begin{abstract}
\noindent
Asymptotic inference using functional principal component regression (FPCR) has long been considered difficult, 
largely because, upon \textit{any scalar scaling}, the FPCR estimator fails to satisfy a central limit theorem, 
leading to the prevailing belief that it is unsuitable for direct statistical inference. 
In this paper, we revisit this traditional viewpoint by establishing a new result: 
upon \textit{suitable operator scaling}, valid Gaussian and bootstrap approximations hold for the FPCR estimator. 
We apply this surprising finding to hypothesis testing for the significance of the slope function in functional regression models and demonstrate the strong numerical performance of the resulting tests. 
Our finding also provides a new framework for projection inference.
While concise, our results yield powerful inferential tools for functional regression.
We believe it paves the way for new lines of inferential methodology for more complex functional regression settings.
\medskip

\noindent\textit{Keywords and phrases:}
bootstrap consistency;
central limit theorem;
functional data analysis;
principal component analysis;
resampling methods;
Wasserstein distance.
\end{abstract}

%\tableofcontents

\newpage

\section{Introduction}

\subsection{Background}

As modern scientific technologies advance, 
data are increasingly viewed not as discrete observations but as realizations of underlying functions defined on a continuum. 
Functional data analysis addresses this perspective by treating the data space $\HH$ as infinite-dimensional, 
which introduces substantial inferential challenges. 
Regression, one of the most fundamental frameworks in Statistics and Machine Learning, is no exception, 
and models that incorporate functional predictors or responses fall under the umbrella of \emph{functional regression}. 
This area has been extensively studied for decades in both Statistics and Machine Learning (for a review, see \citealp{Cuevas14, WCM16, Morr15, GRLG24, ST25} among others)
and has found wide-ranging applications across numerous scientific fields \citep{UF13, SGS13, Warm24, OOO25, AD25}.

The most fundamental functional regression model, known as \textit{scalar-on-function regression (SoFR)}, is a linear model in which the response $Y$ remains scalar while the regressor $X$ is a function-valued random element,
linked to $Y$ through the \textit{slope function} $\beta$ in the function space $\HH$.
To mitigate challenges posed by the infinite dimensionality of $\HH$,
a classical and widely used approach is to employ functional principal components (FPCs).
Namely, in the spirit of the finite-dimensional principal component regression \citep[Chapter~8]{Joll86}, 
this method proceeds 
by (i) projecting functional regressors onto the first  $J$ eigenfunctions, 
(ii) performing least squares estimation using these projections as multivariate regressors, 
and (iii) reconstructing the estimated slope function from the fitted coefficients.
The resulting slope estimator is hereafter denoted by $\hat{\beta}_J$ and referred to as the \textit{functional principal component regression (FPCR)} estimator.
It is worth noting that principal components are particularly important in functional regression,
more so than in finite- or even high-dimensional regression, 
because a general Hilbert space does not possess the standard basis as the Euclidean space does. 

In addition to its conceptual generality, 
this methodology enjoys strong asymptotic properties such as minimax optimality in estimation \citep{HH07} and prediction \citep{CH06},
allowing $J \to \infty$ as the sample size diverges. 
While alternative estimators have been proposed in more structured settings, such as those assuming $\beta$ lies in a restricted subset of $\HH$ \cite[e.g.,][]{YC10},
the FPCR estimator nevertheless remains a central estimator used for SoFR
and has been extensively adapted to more general functional regression models,
including those with 
functional \citep{YMW05b, CM13}
or non-Gaussian \citep{DPZ12} responses, 
additional scalar covariates \citep{SL12, KXYZ16}, 
non-linear regression functions \citep{MY08}, 
sparsely observed \citep{ZYZ23} or high-dimensional \citep{CCYZ25} regressor functions, 
and regressor elements taking values in a Riemannian manifold \citep{CP25}.
While our findings can be further extended,
to more clearly deliver our message, 
we focus on the FPCR estimator for SoFR models.

\subsection{Challenges}

Despite its popularity and practical value,
statistical inference based on the FPCR estimator remains challenging,
primarily because its limiting distribution is degenerate.
In a landmark result, \cite{CMS07} showed that, upon \textit{any scalar scaling}, 
the estimator $\hat{\beta}_J$ fails to satisfy a central limit theorem (CLT).
More precisely, for any increasing sequence $\{a_n\}$ of positive real numbers, the asymptotic distribution of $a_n(\hat{\beta}_J-\beta)$ is necessarily degenerate. 
This phenomenon has serious implications for inference;
for instance, it prevents the construction of meaningful confidence regions for $\beta$ \citep{CR18}. 
This difficulty arises,
because the entire function space $\HH$ is too rich, 
yet the FPCR estimator still targets the slope parameter $\beta$ lying in this infinite-dimensional space $\HH$ through pseudo-inversion of the covariance operator.
	Thus, the degeneracy of the limiting distribution of $\hat{\beta}_J$ is closely tied to the ill-posed inverse nature of the problem.
As a consequence, bootstrap developments for $\hat{\beta}_J$ could be meaningless in functional regression models.

This challenge is unique to settings where the regressor lies in an infinite-dimensional space; 
for finite-dimensional linear regression,
CLTs \citep[Example~2.28]{vaart98} and bootstrap theory \citep{free81,Eck18}  were well-established in different ways;
even in high-dimensional regression models, 
where the dimension diverges with the sample size, 
this pathological behavior does not arise \citep{KF00,CL11, DBZ17, CKR23, LS26}.
Consequently, within functional regression, it has long been accepted that the FPCR estimator is unsuitable for direct statistical inference.
Existing work has therefore circumvented this issue 
by (i) focusing on inference for the mean response at a new functional regressor rather than for the slope function itself \citep{CMS07, YDN23RB}, 
(ii) restricting the parameter space to a considerably smaller subset of $\HH$ \citep{SC15,DT24}, 
or (iii) still targeting the full slope function but relying on CLTs for lower-dimensional summaries 
such as quadratic forms \citep{KSM16}, 
the squared norm \citep{KDD22}, 
or extrema of the estimated coefficients \citep{LL25}.
%where the last work borrows state-of-the-art  techniques from high-dimensional statistics .

\subsection{Contributions}

In this work, we overturn this longstanding belief and establish a new  result that enables statistical inference for functional regression: 
upon \textit{suitable operator scaling}, 
Gaussian and bootstrap approximations hold for the FPCR estimator $\hat{\beta}_J$.
Specifically, we show that the distribution of the operator-scaled functional statistic $T_J \equiv \sqrt{n/J} \widehat{\ga}_J^{1/2}(\hat{\beta}_J-\beta)$ can be approximated 
by (i) the Gaussian distribution on $\HH$ with mean zero and covariance equal to a rank-$J$ operator
and (ii) the bootstrap distribution constructed by a residual-type bootstrap method.
Here, $\widehat{\ga}:\HH\to\HH$ denotes the empirical covariance operator constructed by the observed functional regressors, 
while $\widehat{\ga}_J^{1/2}$ is its approximate square-root operator constructed by the first $J$ eigencomponents. 
A key implication is that a meaningful distributional result for $\hat{\beta}_J$ generally cannot be achieved through scalar scaling alone; 
instead, at minimum, one may need scaling by an operator  such as $\widehat{\ga}_J^{1/2}$.
The role of this operator scaling is to scale down the effect of the unbounded pseudo-inverse of the covariance operator involved in estimating $\beta$ by $\hat{\beta}_J$.
This result reveals that a simple yet carefully chosen operator scaling fundamentally changes the asymptotic landscape, 
making principled inference for functional regression genuinely feasible.

As a statistical application, 
we illustrate our finding through hypothesis testing for the significance of the slope function $\beta$.
Namely, we construct the $L^2$ and supremum norms of the functional statistic $T_{0,J} \equiv \sqrt{n/J} \widehat{\ga}_J^{1/2}\hat{\beta}_J$ as test statistics for $H_0:\beta=0$,
with their sampling distributions approximated by the proposed bootstrap method.
Notably,
this supremum-norm statistic has not been previously analyzed in the functional regression literature,
and its approximate sampling distribution is far from straightforward to derive.
In contrast, the $L^2$-based statistic corresponds to a classical quadratic-form statistic \citep{KSM16}, 
and existing bootstrap results \cite[e.g.,][]{KH24} may follow as consequences of our theory that is established under weaker assumptions.
Through numerical studies, we demonstrate that the resulting tests for $H_0:\beta=0$ are competitive with several state-of-the-art testing methods,
showcasing the practical relevance of our finding.

	Our approximation results also lead to an important implication for inference on the projection of $\beta$ along a direction $v \in \HH$.
	In particular, the resulting inference is valid even when the scaling $\tau_J(v)$ defined in \autoref{thmProj} grows faster than $J$, a regime not covered by the classical results of \cite{CMS07, GM11, KH16a}. 
	Thus, under minimal assumptions, the main result of this paper provides a counterpart to the projection-inference results in \cite{YDN23RB}, even when the direction $v$ is non-random.

It is our view that these findings will have substantial impact on the regression literature involving infinite‑dimensional data and infinite-dimensional parameters,
as many existing approaches for SoFR models have been successfully extended to far more general settings. 
For example, methods developed for SoFR with a single functional regressor naturally extend to
SoFR with multivariate functional regressors \citep[Section~4]{CGLC24},
generalized SoFR models \citep{DPZ12, SC15},
non-linear functional regression \citep{MY08, PCTM18},
SoFR with extra scalar covariates \citep{KXYZ16, LZ20}, 
models involving functional responses \citep{KMSZ08, CM13}, 
and even regression frameworks with manifold data \citep{CP25}.
Consequently, although our theoretical and methodological advancements are rooted in the simple SoFR setting, 
they are poised to influence a broad class of functional regression models and to support future advances across this expanding research area.

Before concluding the Introduction, we elaborate on several technical aspects of our contribution.
First, the main theoretical results we obtain are  genuinely new to the literature;
although the statements are simple, their implications are substantial. 
Second, the technical arguments required for our result do not follow directly from existing work such as \cite{CMS07} and \cite{YDN23RB}. 
While we draw on certain auxiliary lemmas from these papers, 
the underlying problems differ in essential ways, 
necessitating new analytical tools. 
Third, 
our bootstrap consistency result deserves separate emphasis:
unlike many bootstrap results in Statistics, 
the bootstrap consistency result does not rely on \textit{any} Gaussian approximation. 
Remarkably, the assumptions required for bootstrap consistency are weaker than those imposed for the Gaussian approximation itself.
In fact, in the development of this paper, the Gaussian and bootstrap results were obtained independently;
the bootstrap consistency was established first, and only subsequently did it become clear that a corresponding Gaussian approximation could be derived.
Nevertheless, the Gaussian approximation remains crucial for applying the bootstrap to statistical inference, since the continuity and boundedness of the resulting test statistics’ densities help ensure the validity of their statistical guarantees.
Fourth,
establishing the Gaussian approximation is somewhat more delicate, 
requiring 
%a combination of functional calculus techniques \cite[e.g.,][]{CMS07} and 
bounds on the Wasserstein distance to multivariate normal distributions \cite[e.g.,][]{Bonis20}.
Finally, our main approximation results are formulated in terms of the Wasserstein distance, which yields a stronger mode of distributional approximation; for example, they imply approximations in the Prohorov metric.

\subsection{Organization of the paper}

This paper is structured as follows.
In \autoref{sec_main}, after reviewing the FPCR estimation and a subsequent residual bootstrap, 
we present our main approximation results.
	\autoref{secStatApp} then applies these results to hypothesis testing for the significance of the slope function and projection inference onto a non-random direction $v \in \HH$, and examine their performance numerically.
A brief concluding remark is given in \autoref{sec_conc}, and all technical details are deferred to the Appendix.

%\tred{
	%1) importance/prevalence of CLT in classical statistics, particularly in multivariate/high-dimensional cases.
	%2) but CLT may not hold for functional regression upon any \textit{scalar scaling},
	%which is not even in high-dim
	%3) we found that \textit{upon operator scaling} Gaussian approximation hold. This operator scaling looks optimal, as without it, bias term not vanishes \cite[Proposition~1?]{CMS07}
	%4) more on this?
	%5) application to hypothesis testing
	%6) organization
	%}

\section{Main results} \label{sec_main}

%Upon introducing the FPCR estimator and a residual bootstrap for SoFR  through \autoref{
	%
	%\tred{separate these into three:
		%	1) notation for FLRM and conditions
		%	2) Gau approx upon Prohorov and FPCR estimation
		%	3) BTS approx upon Wasserstein and residual bootstrap }
	%
	%\tred{ need to edit intro and proof all parts in the paper accordingly}
	
	\subsection{SoFR models}

	We start with setting our theoretical framework. 
	Let $\HH$ denote the underlying function space,
	which we assume to be an infinite-dimensional separable Hilbert space.
	Main examples include the space 
	$L^2([0,1]) \equiv \left\{ f:[0,1]\to\R:\int_0^1 f(u)^2 du < \infty \right\}$ 
	of square-integrable real-valued functions on $[0,1]$ with inner product $\langle f,g \rangle \equiv \int_0^1 f(u)g(u)du$
	and the Sobolev space 
	$\W^{1,2}([0,1]) \equiv \left\{ f \in L^2([0,1]): f' \in L^2([0,1]) \right\}$ 
	of (once differentiable) functions on $[0,1]$ whose derivatives are also square-integrable, with inner product $\langle f,g \rangle \equiv \int_0^1 f(u)g(u) du + \int_0^1 f'(u)g'(u) du$. 
	%Another notable example is the Bayes space of density functions \citep{VEP14}.
	We consider the following linear model with scalar response $Y$ but functional regressor $X$:
	\begin{align} \label{eq_sofr}
		Y = \ap + \langle \beta, X \rangle + \e.
	\end{align}
	Here, $\beta \in \HH$ is the slope parameter of importance in our framework;
	$X$ is the random element that takes values in $\HH$;
	$\e$ denotes the scalar-valued random error with homoscedastic variance $\s^2 \equiv \eo[\e^2|X] \in (0,\infty)$; and
	$\ap \in \R$ is the scalar intercept parameter.

	For $x,y \in \HH$, we define the tensor product $x\otimes y:\HH\to\HH$ by $(x\otimes y)(z) = \langle z,x \rangle y$ for $z \in \HH$. 
	Throughout the paper, we assume that $X$ has finite second moment as $\eo[\|X\|^2]<\infty$.
	The covariance operator $\ga$ of $X$ is defined by $\ga \equiv \eo[(X-\eo[X])^{\otimes 2}]$, 
	where $x^{\otimes 2} \equiv x \otimes x$ is the shorthand for the squared tensor product of $x \in \HH$. 
	Since the covariance operator $\ga$ is a positive semi-definite, self-adjoint, and compact operator on $\HH$,
	and hence admits spectral decomposition as 
	$\ga =\sum_{j=1}^\infty \g_j \phi_j^{\otimes 2}$,
	where $(\g_j,\phi_j)$ is the $j$-th eigenvalue-eigenfunction pair of $\ga$.
	Here, the eigenvalues $\{\g_j\}_{j=1}^\infty$ satisfy $\g_1 \geq \g_2 \geq \cdots \geq 0$ and $\g_j\to0$ as $j\to\infty$,
	while the eigenfunctions $\{\phi_j\}_{j=1}^\infty$  form an orthonormal system of $\HH$. 
	For later developments, 
	we consider the sequence $\{\dt_j\}_{j=1}^\infty$ of eigengaps defined as $\dt_1 \equiv \g_1 - \g_2$ and $\dt_j \equiv \min\{ \g_j - \g_{j+1}, \g_{j-1} - \g_j \}$ for integer $j \geq 2$.
	We guide the readers to \cite{HE15} for general Hilbert space theory related to functional data analysis. 
	
	We consider the following systematic conditions:
	\begin{enumerate}[label=(A\arabic*)]
		\item $\ker \ga \equiv \{ x \in \HH: \ga x = 0 \} = \{0\}$; and \label{condA1ker}
		\item the eigenvalues $\{\g_j\}_{j=1}^\infty$ are positive and distinct. \label{condA2ev}
	\end{enumerate}
	Condition~\ref{condA1ker} guarantees the identifiability of the SoFR model in \eqref{eq_sofr} \cite[e.g.,][]{CFS99}.
	Through Condition~\ref{condA2ev}, 
	we can simplify the framework by removing potential multiplicities of the eigencomponents 
	and exclude some trivial cases where the functional regressors are supported on a finite-dimenionsional subspace \cite[e.g.,][]{HH07}. 
	Unless otherwise stated, the subsequent discussion proceeds under the above framework together with Conditions~\ref{condA1ker}--\ref{condA2ev}.

	\subsection{FPCR estimator and residual bootstrap}
	
	To describe the FPCR estimation, 
	we suppose that the data observations $\{(Y_i, X_i)\}_{i=1}^n$ are independently generated from the SoFR model \eqref{eq_sofr}. 
	Then, the sample covariance operator $\widehat{\ga}$ of the regressors $\{X_i\}_{i=1}^n$ is defined by $\widehat{\ga} \equiv n^{-1} \sum_{i=1}^n (X_i - \bar{X})^{\otimes 2}$, where $\bar{X} \equiv n^{-1} \sum_{i=1}^n X_i$. 
	It also admits spectral decomposition as 
	$\widehat{\ga} =\sum_{j=1}^n \hat{\g}_j \hat{\phi}_j^{\otimes 2}$,
	where $\hat{\g}_j$ and $\hat{\phi}_j$ are the $j$-th eigenvalue and eigenfunction of $\widehat{\ga}$, respectively.
	The FPCR estimator is then obtained through the following steps.
	First, we project the functional regressors $\{X_i\}_{i=1}^n$ onto the first $J$ eigenfunctions $\{\hat{\phi}_j\}_{j=1}^J$,
	while allowing $J\to\infty$ as $n\to\infty$.
	Second, denoting by $\bm{X}_i \equiv [\langle X_i, \hat{\phi}_j \rangle]_{1 \leq j \leq J}$ the $i$-th projected regressor onto the first $J = J_n$ eigenfunctions for each $i=1,\dots,n$, 
	we find the least square estimator 
	$\hat{\bm{\beta}}_{\mathrm{coef},J} 
	= (\hat{\beta}_{\mathrm{coef},1}, \dots, \hat{\beta}_{\mathrm{coef},J})^\top$
	from the multiple linear regression of the original responses $\{Y_i\}_{i=1}^n$ on $\{\bm{X}_i\}_{i=1}^n$.
	Finally, the FPCR estimator $\hat{\beta}_J$ is defined by
	\begin{align} \label{eq_fpcr_est}
		\hat{\beta}_J \equiv \sum_{j=1}^J \hat{\beta}_{\mathrm{coef},j} \hat{\phi}_j.
	\end{align}

	We introduce a residual bootstrap procedure to create bootstrap data $\{(Y_i^*, X_i)\}_{i=1}^n$ to mimic the original observations $\{(Y_i, X_i)\}_{i=1}^n$. 
	Considering the (centered) residuals 
	\begin{align*}
		\hat{\e}_i  \equiv Y_i - \bar{Y} - \langle \hat{\beta}_J, X_i - \bar{X} \rangle, \quad i=1,\dots,n,
	\end{align*}
	we draw bootstrap errors $\e_1^*, \dots, \e_n^*$ uniformly from $\{\hat{\e}_i\}_{i=1}^n$. 
	Then, following the original SoFR model in \eqref{eq_sofr}, the bootstrap responses are computed as
	\begin{align*}
		Y_i^* \equiv \bar{Y} + \langle \hat{\beta}_J, X_i - \bar{X} \rangle + \e_i^*, \quad i=1,\dots,n.
	\end{align*}
	%Here, $J$ is an additional truncation level, where the corresponding FPCR estimator $\hat{\beta}_J$ is used for computing residuals and generating the bootstrap responses but not for  main bootstrap inference tasks. 
	We then construct a bootstrap version $\hat{\beta}_J^*$ of the FPCR estimator akin to the original estimator $\hat{\beta}_J$  in \eqref{eq_fpcr_est}. 
	Specifically, 
	using the fitted coefficients $\hat{\bm{\beta}}_{\mathrm{coef},J}^* = (\hat{\beta}_{\mathrm{coef},1}^*, \dots, \hat{\beta}_{\mathrm{coef},J}^*)^\top$ from the linear regression of the bootstrap responses $\{Y_i^*\}_{i=1}^n$ on the projected regressors $\{\bm{X}_i\}_{i=1}^n$,
	the bootstrap FPCR estimator $\hat{\beta}_J^*$ is then given by
	\begin{align} \label{eq_fpcr_est_bts}
		\hat{\beta}_J^* \equiv \sum_{j=1}^J \hat{\beta}_{\mathrm{coef},j}^* \hat{\phi}_j. 
	\end{align}

	\subsection{Gaussian and bootstrap approximation results}

	\newcommand{\wsd}{\mathsf{W}}
	Our approximation results are stated with the (second-order) Wasserstein distance between two probability distributions. 
	Let $\B$ be a generic Banach space with norm $\|\cdot\|$ and we write $\pp(\B)$ for the set of all probability distributions on $\B$.
	Then, the Wasserstein metric $\wsd$ on $\pp(\B)$ is defined by 
	\begin{align} \label{eqWassDist}
		\wsd(P,Q) \equiv \inf_{U' \sim P, V' \sim Q} \eo[\|U'-V'\|^2]^{1/2}, \quad P\in \pp(\B), \quad Q \in \pp(\B).
	\end{align}
	With a slight abuse of notation, 
	for $P\in \pp(\B)$ and $Q \in \pp(\B)$, and two random elements $U \sim P$ and $V \sim Q$ taking values in $\B$,
	we write 
	\begin{align*}
		\wsd(U,V) \equiv \wsd(P, Q) = \inf_{U' \ed U, V' \ed V} \eo[\|U'-V'\|^2]^{1/2}
	\end{align*}
	and call it the Wasserstein distance between $U$ and $V$.
	Since our results hold conditionally on the observed regressors $\xx_n \equiv \{X_i\}_{i=1}^n$, we work with the conditional distribution $\pr^X \equiv \pr(\cdot|\xx_n)$ given $\xx_n$,
	leading to defining the conditional Wasserstein distance $\wsd^X$ by replacing $\eo$ with the conditional expectation $\eo^X \equiv \eo[\cdot|\xx_n]$ in \eqref{eqWassDist},
	i.e.,
	\begin{align} \label{eqWassDistCond}
		\wsd^X(P,Q) \equiv \inf_{U' \sim P, V' \sim Q} \eo^X[\|U'-V'\|^2]^{1/2}, \quad P\in \pp(\B), \quad Q \in \pp(\B).
	\end{align}
	We note that the probability distributions $P,Q$ in \eqref{eqWassDistCond} can depend on the regressors $\xx_n \equiv \{X_i\}_{i=1}^n$.
	We refer to \citet[Section~8]{Mallows72} for more details about the Wasserstein distance and its theoretical properties.

	We list technical conditions for our approximation results. 
	In what follows, all asymptotic statements are understood to hold as the sample size $n$ diverges to infinity.
	\begin{enumerate}[label=(C\arabic*)]
		\item $\sup_{j \in \N} \g_j^{-2} \eo[\langle X-\eo[X]	, \phi_j \rangle^4] < \infty$; \label{condMomentX}
		
		\item $\g_j$ is a convex positive function of $j$ for sufficiently large $j$; \label{condConvexEV}
		
		\item $\sup_{j \in \N} \g_j j \log j  < \infty$; \label{condDecayEV}
		
		\item  \label{condBasicTrunc}
		$n^{-1} \sum_{j=1}^J \dt_j^{-2} + n^{-1/2} J^2 \log J = o(1)$;
		
	\end{enumerate}
	%	\begin{enumerate}[label=(B$_v$)]
		%		\item \label{condSlope}
		%		$J^v \sum_{j>J} \langle \beta, \phi_j \rangle^2 = o(1)$ for $v \in (1,\infty)$; and
		%		
		%	\end{enumerate}
	%	\begin{enumerate}[label=(E)]
		%		\item  \label{condErr}$\eo[\e^4|X] \equiv \eo[\e^4] < \infty$; 
		%	\end{enumerate}
	%	\begin{enumerate}[label=(R$_w$)]
		%		\item  \label{condTrunc} $J = o(n^{1/w})$ for $w \in (0,\infty)$.
		%		
		%	\end{enumerate}
	%	Conditions~\ref{condMomentX}--\ref{condTrunc} are required to apply theory of functional calculus, which is a standard technique in functional data analysis; see \cite{CMS07} and \cite{YDN23RB} for example.
	%	Condition~\ref{condMomentX} ensures that the normalized functional principal component scores have finite fourth moments,
	%	implying that $\eo[\|X\|^4]<\infty$.
	%	The next tree conditions, \ref{condConvexEV}--\ref{condBasicTrunc}, are basic assumptions that enable the derivation of perturbation lemmas (cf.~\autoref{appTechBias}). 
	%	Condition~\ref{condSlope} implies a mild smoothness requirement on the slope function $\beta$,
	%	which holds, for example, when $|\langle \beta, \phi_j \rangle| \asymp j^{-b}$ with $b > (1+v)/2$.
	%	The finite fourth moment condition \ref{condErr} on the error is used to derive the bootstrap approximation of the variance term (cf.~\autoref{appTechVar}).
	%	Finally, Condition~\ref{condTrunc} specifies a sufficient growing rate for the truncation level $J$,
	%	where a larger value of $w$ corresponds to a slower growth rate.
	%	%for instance, it is satisfied when $J \asymp n^{1/(u+\ka)}$ for some $\ka>0$.
	Conditions~\ref{condMomentX}--\ref{condBasicTrunc} are required to apply theory of functional calculus, which is a standard technique in functional data analysis; 
	see \cite{CMS07} and \cite{YDN23RB} for example.
	Condition~\ref{condMomentX} ensures that the normalized FPC scores have finite fourth moments,
	implying that $\eo[\|X\|^4]<\infty$.
	The next three conditions, \ref{condConvexEV}--\ref{condBasicTrunc}, facilitate the derivation of perturbation lemmas (cf.~\autoref{appTechBias}). 
	Among them, Condition~\ref{condDecayEV} is implicitly assumed in exiting literature, such as in \cite{CMS07}, 
		despite a pathological case where it fails to hold.
		For completeness, we provide this counterexample in \autoref{egCounterCond3} of the Appendix.
		Condition~\ref{condBasicTrunc} regulates the divergence of $J$ through two components. 
		The first part $n^{-1}\sum_{i=1}^n \dt_j^{-2} = o(1)$ enables us to replace the sample contours with their population versions for functional calculus; 
		see \autoref{appTechBias} and \citet[Lemma~S2]{YDN23RB} for a related discussion.
		Under polynomial decay rate $\dt_j \asymp j^{-a-1}$ for $a>1$, this can be translated as $J= o(n^{1/(2a+3)})$,
		where $r_j \asymp s_j$ means that the ratio $r_j/s_j$ of two sequences $\{r_j\}$ and $\{s_j\}$ of positive real numbers is bounded away from zero and infinity.
		The second part $n^{-1/2} J^2 \log J = o(1)$ is required to determine moment bounds via Condition~\ref{condConvexEV} and the convexity lemma (\autoref{lemCMS07}(a) of the Appendix). 
		Such mild conditions are ubiquitous in functional data analysis \citep{HH07,LL25} even within non-FPC-based methodologies
		\citep{SC15}.

	We now provide our new result
	for estimating the sampling distribution of the operator-scaled functional statistic
	\begin{align} \label{eqStat}
		T_J \equiv \sqrt{n / J} \widehat{\ga}_J^{1/2}(\hat{\beta}_J - \beta)
	\end{align}
	%with the standardized Gaussian distribution supported on $\HH_J \equiv \mathrm{span}(\{\phi_j\}_{j=1}^J)$ or 
	with the bootstrap distribution of its bootstrap version
	\begin{align} \label{eqStatBTS}
		T_J^* \equiv \sqrt{n / J} \widehat{\ga}_J^{1/2}(\hat{\beta}_J^* - \hat{\beta}_J)
	\end{align}
	conditionally on the regressors $\xx_n \equiv \{X_i\}_{i=1}^n$.
	Here, the operator scaling $\widehat{\ga}_J^{1/2} \equiv \sum_{j=1}^J \hat{\g}_j^{1/2} \hat{\phi}_j^{\otimes 2}$ is the pseudo-square-root of the sample covariance operator $\widehat{\ga}$.
	The proof of the following \autoref{thmMain} is given in Appendix~\ref{appTechMain1}.

		\begin{thm} \label{thmMain}
			
			Suppose that Conditions~\ref{condMomentX}--\ref{condBasicTrunc} hold.
			\begin{enumerate}[(a)]

				\item (Gaussian approximation)
				We further suppose that $\eo[\e^4|X] \equiv \eo[\e^4] < \infty$ and $J = o(n^{1/w})$ for some $w >6$. 
				Then, the sampling distribution of $T_J$ is approximated by a Gaussian distribution as
				\begin{align*}
					%				\wsd^X ( T_J, \s J^{-1/2}G_J ) = o_\pr(1),
					\wsd^X ( T_J, \s J^{-1/2}G_J ) = O_\pr(n^{-1/2}J),
				\end{align*}
				where $G_J$ is a Gaussian random element taking values in $\HH$ with mean zero and covariance operator $\widehat{\Pi}_J \equiv \sum_{j=1}^J \hat{\phi}_j^{\otimes 2}$.

				\item (Bootstrap approximation)
				Without any further assumptions beyond Conditions~\ref{condMomentX}--\ref{condBasicTrunc}, 
				the residual bootstrap is valid for approximating the distribution of $T_J$ as
				\begin{align*}
					%				\wsd^X ( T_J, T_J^* )=o_\pr(1).
					\wsd^X ( T_J, T_J^* )=O_\pr(n^{-1/2}) + O_\pr(J/n) + O \left( \sum_{j>J}\g_j \langle \beta, \phi_j \rangle^2 \right).
				\end{align*}

			\end{enumerate}
			
		\end{thm}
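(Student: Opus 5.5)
We start from the algebraic form of the FPCR estimator. Since the projected regressors are scores along the empirical eigenfunctions, least squares gives $\hat{\beta}_J = \widehat{\ga}_J^{\dagger}\widehat{\Dt}$, where $\widehat{\ga}_J^{\dagger} \equiv \sum_{j\le J}\hat{\g}_j^{-1}\hat{\phi}_j^{\otimes 2}$ and $\widehat{\Dt} \equiv n^{-1}\sum_i (Y_i-\bar Y)(X_i-\bar X)$. Substituting the model gives
\begin{align*}
\hat{\beta}_J - \beta = \widehat{\ga}_J^{\dagger} U_n - (I-\widehat{\Pi}_J)\beta, \qquad U_n \equiv n^{-1}\sum_{i=1}^n \e_i (X_i-\bar X),
\end{align*}
using $\widehat{\ga}_J^{\dagger}\widehat{\ga}=\widehat{\Pi}_J$. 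Since $\widehat{\ga}_J^{1/2}(I-\widehat{\Pi}_J)=0$, the bias term disappears exactly, so $T_J = \sqrt{n/J}\,(\widehat{\ga}_J^{\dagger})^{1/2} U_n$. Conditionally on $\xx_n$, this is a linear combination of the independent errors:
\begin{align*}
T_J = \sum_{i=1}^n \e_i\, \xi_i, \qquad \xi_i \equiv (nJ)^{-1/2}\sum_{j\le J} \hat{\g}_j^{-1/2}\langle X_i-\bar X,\hat{\phi}_j\rangle \hat{\phi}_j.
\end{align*}
The vectors $\bm z_i \equiv [\hat{\g}_j^{-1/2}\langle X_i-\bar X,\hat\phi_j\rangle]_{j\le J}$ are the whitened scores. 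They satisfy $n^{-1}\sum_i \bm z_i\bm z_i^\top = I_J$ exactly. Hence $\mathrm{Cov}^X(T_J)=\s^2 J^{-1}\widehat{\Pi}_J$, which matches $\s J^{-1/2}G_J$.

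For part (a), we work in the coordinates $\hat\phi_1,\dots,\hat\phi_J$. This is an isometry onto $\R^J$, so we need to bound $\wsd^X$ between $J^{-1/2}n^{-1/2}\sum_i \e_i \bm z_i$ and $\nd(0,\s^2 J^{-1} I_J)$. We would invoke a Wasserstein CLT for sums of independent, non-identically distributed vectors with identity total covariance, such as the bound of \cite{Bonis20}. It controls $\wsd$ by terms of the type $n^{-1}\big(\sum_i \eo^X\|\e_i\bm z_i\|^4/n\big)^{1/2}$, together with a cross term. After the prefactor $J^{-1/2}$, this gives a bound of order $n^{-1/2}J^{-1/2}\big(\eo[\e^4]\, n^{-1}\sum_i\|\bm z_i\|^4\big)^{1/2}$. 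The crux is then to show $n^{-1}\sum_i \|\bm z_i\|^4 = O_\pr(J^2)$, which yields the rate $O_\pr(n^{-1/2}J)$, or better, after the $J^{-1/2}$.

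To bound $\|\bm z_i\|^2=\sum_{j\le J}\hat\g_j^{-1}\langle X_i-\bar X,\hat\phi_j\rangle^2$, we compare it with the population version $\sum_{j\le J}\g_j^{-1}\langle X_i-\eo X,\phi_j\rangle^2$. The population version has fourth moment $O(J^2)$ by Condition~\ref{condMomentX} and Cauchy--Schwarz. The perturbation error comes from replacing $(\hat\g_j,\hat\phi_j)$ by $(\g_j,\phi_j)$. We would control it with the contour-integral and convexity lemmas (\autoref{lemCMS07} and \citet[Lemma~S2]{YDN23RB}), using Condition~\ref{condBasicTrunc} to make $n^{-1}\sum_{j\le J}\dt_j^{-2}$ and $n^{-1/2}J^2\log J$ negligible. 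The restriction $J=o(n^{1/w})$ with $w>6$ is used to absorb the higher-order remainders of these expansions. This perturbation control is the step I expect to be the main obstacle. Relative eigenvalue errors $\hat\g_j/\g_j-1$ must be uniformly small for $j\le J$, even though $\g_j\to0$. I would first try a bound on the operator $\widehat{\ga}_J^{-1/2}$-weighted difference through resolvent expansions along the population contours, rather than bounding coordinates one by one.

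For part (b), the same algebra gives $T_J^*=\sum_i \e_i^*\xi_i$, with $\e_i^*$ i.i.d. from the empirical distribution $\hat F$ of the residuals. We couple the two representations directly, which avoids any Gaussian approximation. We take $(\e_i,\e_i^*)$ i.i.d. distributed as an optimal $\wsd$-coupling of $F$, the error law, and $\hat F$ (after centering). Since the $\xi_i$ are fixed given $\xx_n$ and $\sum_i\xi_i\otimes\xi_i=J^{-1}\widehat{\Pi}_J$ has trace $1$,
\begin{align*}
\eo^X\Big\|\sum_i(\e_i-\e_i^*)\xi_i\Big\|^2 = \sum_i\|\xi_i\|^2\,\eo(\e_i-\e_i^*)^2 + \Big\|\sum_i \xi_i\Big\|^2(\eo[\e-\e^*])^2 .
\end{align*}
The last sum vanishes because $\sum_i \bm z_i=0$. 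So $\wsd^X(T_J,T_J^*)\le \wsd(F,\hat F)$. It remains to bound the Wasserstein distance between the residual and error distributions. We split the residual as $\hat\e_i = \e_i-\bar\e - \langle\hat\beta_J-\beta,X_i-\bar X\rangle$. The empirical versus true error law contributes $o_\pr(1)$; this is made quantitative as $O_\pr(n^{-1/2})$ via the centering term. The second piece contributes $n^{-1}\sum_i\langle\hat\beta_J-\beta,X_i-\bar X\rangle^2=\langle\widehat{\ga}(\hat\beta_J-\beta),\hat\beta_J-\beta\rangle$. This splits into a variance part $\s^2 J/n$ and a bias part $\langle\widehat{\ga}(I-\widehat{\Pi}_J)\beta,\beta\rangle$. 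The bias part is compared with $\sum_{j>J}\g_j\langle\beta,\phi_j\rangle^2$ using the same perturbation bounds, which produces the three stated terms.
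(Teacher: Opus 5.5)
Your proposal follows the paper's proof essentially step for step: the exact cancellation $\widehat{\ga}_J^{1/2}(I-\widehat{\Pi}_J)=0$ reduces $T_J$ and $T_J^*$ to $\sqrt{n/J}\,\widehat{\ga}_J^{-1/2}U_n$ and $\sqrt{n/J}\,\widehat{\ga}_J^{-1/2}U_n^*$. Part (a) applies Bonis's Wasserstein CLT to the whitened scores, with $n^{-1}\sum_i\|\bm{z}_i\|^4=O_\pr(J^2)$ obtained by comparing $\widehat{\ga}_J^{-1}$ with $\ga_J^{-1}$ in operator norm (\autoref{lemPerturb2}). Part (b) uses the Kreiss--Franke coupling to get $\mathsf{W}^X(T_J,T_J^*)\le \mathsf{W}(F,\hat F)$, then compares residuals with errors through $\bar{\e}$ and the in-sample prediction error $\|\widehat{\ga}^{1/2}(\hat{\beta}_J-\beta)\|^2$, split into variance and bias parts.

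One small correction: the centering term only contributes $\bar{\e}^2=O_\pr(n^{-1})$ to the squared distance between $\hat F$ and the empirical law of $\e_1,\dots,\e_n$. The distance between that empirical law and $F$ therefore needs its own bound, which the paper asserts separately. Also, exactly as in \eqref{eq_thmBTSrate}, these residual bounds control the squared distance $\mathsf{W}(F,\hat F)^2$ rather than $\mathsf{W}(F,\hat F)$ itself.
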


		\autoref{thmMain} have several implications.
		First, the required conditions are minimal; 
		bootstrap does not impose any further conditions outside Conditions~\ref{condMomentX}--\ref{condBasicTrunc}.
		In particular, both approximation results does not require any condition for the true slope parameter $\beta$,
		which is contrasted with existing results \cite[e.g.,][]{HH07, CH06,CMS07,YDN23RB,YC10}. 
		Second, the bootstrap result requires less conditions than the Gaussian approximation.
		Nevertheless, the additional conditions for the Gaussian approximation are not stringent. 
		The finite fourth moment in $\eo[\e^4|X] \equiv \eo[\e^4] < \infty$ is assumed to use multivariate Gaussian approximation in \cite{Bonis20};
		the assumption that $J = o(n^{1/w})$ for some $w >6$ is standard when using perturbation theory \cite[e.g.,][]{CMS07,YDN23RB},
		where  more details can be found in \autoref{lemPerturb2} of \autoref{appTechBias}.
		Third, the functional statistic $T_J$ in \eqref{eqStat} does not exhibit a \textit{limiting} distribution, instead it is \textit{approximated} by a Gaussian distribution with identity-like operator $\widehat{\Pi}_J$.
		Approximation by a Gaussian distribution is still crucial,
		as it helps finding bootstrap approximation of the scalar-valued statistics in the following sections with respect to Kolmogorov distance.

	\begin{rem}
		In both statistics \eqref{eqStat}--\eqref{eqStatBTS}, 
		the scaling terms $J^{-1/2}$ and $\widehat{\ga}_J^{1/2}$ are both essentially indispensable alongside the usual factor $\sqrt{n}$.
		Specifically,
		without scaling by $\widehat{\ga}_J^{1/2}$, 
		the variance terms could fail to stabilize,
		%		 (cf.~Propositions~\ref{propGauVar} and~\ref{propVar} in the Appendix),
		and omitting either $J^{-1/2}$ or $\widehat{\ga}_J^{1/2}$ prevents the bias term from becoming negligible.
		%		(cf.~\autoref{lemPerturb1} in the Appendix). 
		%		\tred{} further, this operator scaling removes bias in the original statistic, which is rare in the literature and useful for deriving this result.
	\end{rem}

	\section{Statistical applications} \label{secStatApp}

		Our main theoretical development naturally produces approaches for two important statistical tasks in the SoFR literature: 
		inference for the entire slope $\beta$ and its projection onto a direction $v \in \HH$.

	\subsection{Hypothesis testing for the significance of  the slope function} \label{ssecTest}
	
	%$X$ mean square continuous, \cite[Section~7.3]{HE15} $\lim_{m\to\infty}\eo[\|X(t_m)-X(t)\|^2]$ for any $t \in [0,1]$ and sequence $\{t_m\}_{m=1}^\infty$ in $[0,1]$ converging to $t$. 

	Hypothesis testing for the slope parameter is one of the most important problems for functional regression, analogously to those for finite-dimensional regression.
	The null hypothesis is represented as $H_0:\beta=0$. 
	Historically, quadratic-form statistics have been widely used, 
	as they asymptotically resemble the classical F test in finite-dimensional cases \citep{CFMS03, HMV13, lei14, KSM16}.
	Recently, \cite{LL25} develop a new test using extrema statistics and along with state-of-the-art techniques including high-dimensional bootstrap and partial standardization \cite[cf.][]{LLM23}. 
	
	In this section, we apply our approximation results to this fundamental problem.
	As test statistics, we consider the $L^2$ and supremum norms of the functional test statistic $T_{0,J} \equiv \sqrt{n/J} \widehat{\ga}_J^{1/2} \hat{\beta}_J$.
	These statistics and their bootstrap counterparts are defined by
	\begin{alignat}{3}
		S_{\mathrm{sq},J}
		& \equiv  \int_0^1 T_{0,J}(u)^2 du,
		&& \qquad S_{\mathrm{sup},J}
		&& \equiv \sup_{u \in [0,1]} |T_{0,J}(u)|, \label{eqStatTest} \\
		S_{\mathrm{sq},J}^*
		& \equiv  \int_0^1 T_J^*(u)^2 du,
		&& \qquad S_{\mathrm{sup},J}^*
		&& \equiv \sup_{u \in [0,1]} |T_J^*(u)|. \label{eqStatTestBTS}
	\end{alignat}
	%\begin{align}
	%	S_{\mathrm{sq},J}
	%	& \equiv \int_0^1 T_{0,J}(u)^2 du, \label{eqStatL2} \\
	%	S_{\mathrm{sup},J}
	%	& \equiv \sup_{u \in [0,1]} |T_{0,J}(u)|. \label{eqStatSup}
	%\end{align}
	%Their bootstrap counterparts are given by  
	%\begin{align}
	%	S_{\mathrm{sq},J}^*
	%	& \equiv \int_0^1 T_J^*(u)^2 du, \label{eqStatL2} \\
	%	S_{\mathrm{sup},J}
	%	& \equiv \sup_{u \in [0,1]} |T_J^*(u)|. \label{eqStatSup}
	%\end{align}
	%where $T_{0,J}^* \equiv \sqrt{n/J} \widehat{\ga}_J^{1/2} \hat{\beta}_J^*$.
	Based on the following theorem, for each $l \in \{\mathrm{sq},\mathrm{sup}\}$, 
	in the Kolmogorov distance,
	the distributions of the original statistics $S_{l,J}$ in \eqref{eqStatTest} are consistently estimated by their bootstrap counterparts of $S_{l,J}^*$ in \eqref{eqStatTestBTS}, 
	which helps calibrating the critical value for testing. 
	Its proof is deferred to Appendix~\ref{appTechTest}.

		\begin{thm} \label{thmTest}
			Suppose that the assumptions of \autoref{thmMain}(a) hold.
			%		Suppose that Conditions~\ref{condMomentX}--\ref{condBasicTrunc} and \ref{condErr} hold along with Conditions~\ref{condSlope} and~\ref{condTrunc}.
			Under $H_0:\beta=0$, we have 
			\begin{align*}
				\sup_{s \in \R} |\pr(S_{l,J} \leq s|\xx_n) - \pr^*(S_{l,J}^* \leq s|\xx_n)| = o_\pr(1)
			\end{align*}
			(i) for $l=\mathrm{sq}$ when $\HH = L^2([0,1])$ 
			%		if $v>2$ and $w>6$
			and (ii) for $l=\mathrm{sup}$ when $\HH = \W^{1,2}([0,1])$ 
			%		if $v>2.25$ and $w>6.5$. 
			if it additionally holds that $J^{3/4} \sum_{j>J} \g_j \langle \beta, \phi_j \rangle^2 = o(1)$.
		\end{thm}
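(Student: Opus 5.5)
Under $H_0:\beta=0$ we have $T_{0,J}=T_J$, so $S_{l,J}=\|T_J\|_l$, where $\|\cdot\|_{\mathrm{sq}}$ is the squared $L^2$ norm and $\|\cdot\|_{\mathrm{sup}}$ the supremum norm. The bootstrap counterparts are $S_{l,J}^*=\|T_J^*\|_l$. The plan is to pass through the Gaussian surrogate $Z_J\equiv \s J^{-1/2}G_J$ via the triangle inequality:
\begin{align*}
\sup_s|\pr(S_{l,J}\le s|\xx_n)-\pr^*(S^*_{l,J}\le s|\xx_n)|
\le \sup_s|\pr^X(\|T_J\|_l\le s)-\pr^X(\|Z_J\|_l\le s)| + \sup_s|\pr^X(\|Z_J\|_l\le s)-\pr^*(\|T^*_J\|_l\le s)|.
\end{align*}

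The first term is controlled by \autoref{thmMain}(a). For the second term, since $\beta=0$ the bias term in \autoref{thmMain}(b) vanishes, so $\wsd^X(T_J^*,Z_J)\le \wsd^X(T_J^*,T_J)+\wsd^X(T_J,Z_J)=O_\pr(n^{-1/2}J)+O_\pr(J/n)=o_\pr(1)$. Here $J=o(n^{1/w})$ with $w>6$ is used. In case (ii), the tail condition $J^{3/4}\sum_{j>J}\g_j\langle\beta,\phi_j\rangle^2=o(1)$ is trivially satisfied under $H_0$. It is stated presumably so that the bootstrap, which resamples around $\hat\beta_J$, is controlled at a rate fine enough for the anti-concentration step below; I will keep track of it but expect it to be harmless when $\beta=0$.

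The conversion from Wasserstein closeness to Kolmogorov closeness is the standard smoothing argument. If $\wsd(U,V)\le\eta$, then by Markov's inequality $\pr(\|U-V\|>\eta^{1/2})\le\eta$ for an optimal coupling. For each $l$, the map $x\mapsto\|x\|_l$ is Lipschitz with respect to the norm of $\HH$: in case (i) $\|x\|_{L^2}$ itself is 1-Lipschitz, and we work with its square root. In case (ii), the Sobolev embedding $\sup_u|f(u)|\le C\|f\|_{\W^{1,2}}$ gives the Lipschitz property, which is why $\HH=\W^{1,2}([0,1])$ is needed there. Consequently
\[
\sup_s|\pr(\|U\|_l\le s)-\pr(\|V\|_l\le s)|\le \eta + \sup_s\pr(s<\|V\|_l\le s+C\eta^{1/2}),
\]
and it remains to show anti-concentration of $\|Z_J\|_l$ uniformly in $J$, conditionally on $\xx_n$.

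\textbf{Main obstacle: anti-concentration.} For (i), $\|Z_J\|^2_{L^2}=\s^2J^{-1}\chi^2_J$ exactly, because $G_J$ has covariance equal to the projection $\widehat\Pi_J$. Its density on the scale $\s^2J^{-1}\chi^2_J$ is of order $J^{1/2}$, since $\chi^2_J$ has standard deviation $\sqrt{2J}$. Hence the Lévy concentration at resolution $\epsilon$ is $O(J^{1/2}\epsilon)$. With $\epsilon\asymp (n^{-1/2}J)^{1/2}$, or with $\epsilon=n^{-1/2}J$ after a sharper coupling argument using the Wasserstein bound directly together with the Lipschitz property, we need $J^{1/2}\cdot(n^{-1/2}J)^{1/2}=n^{-1/4}J=o(1)$. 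This holds under $w>6$.

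For (ii), $\sup_u|Z_J(u)|$ is the supremum of a Gaussian process with $\mathrm{var}\,Z_J(u)=\s^2J^{-1}\sum_{j\le J}\hat\phi_j(u)^2$. I would apply Nazarov's inequality, or the Chernozhukov–Chetverikov–Kato anti-concentration bound for Gaussian suprema, after discretizing $[0,1]$ using the $\W^{1,2}$ (Hölder-$1/2$) continuity of the paths. This gives a concentration bound of order $\epsilon\,(\text{expected sup})/\underline{\s}^2$, or $\epsilon\sqrt{\log N}/\underline\s$. The delicate point is a lower bound on the pointwise standard deviation: $J^{-1}\sum_{j\le J}\hat\phi_j(u)^2$ is only of order $J^{-1}$ (or smaller), because the $\W^{1,2}$-normalized $\hat\phi_j$ can have small sup-norm. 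Balancing this against the rate $n^{-1/2}J$ is where the extra power $J^{3/4}$ and $w>6$ enter. I would bound the concentration by $C\epsilon J^{1/2}\sqrt{\log J}$, possibly using the CCK bound in terms of $\eo\sup$ rather than the minimal variance, and then check that $\epsilon J^{1/2}\sqrt{\log J}\to0$ in probability. This final rate bookkeeping is the step I expect to fail most easily and would revisit first.
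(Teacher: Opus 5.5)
Your overall reduction matches the paper's. Under $H_0$, $T_{0,J}=T_J$. You pass through $Z_J=\s J^{-1/2}G_J$ using Theorem~2.1; the bootstrap term has no bias when $\beta=0$, and the $O_\pr(n^{-1/2})$ part you dropped is harmless. You then push the Wasserstein bounds through the $1$-Lipschitz $L^2$ norm or the $\sqrt{2}$-Lipschitz sup-norm on $\W^{1,2}([0,1])$, and convert to Kolmogorov distance via a density bound for the Gaussian functional. Your Markov-coupling smoothing plays the role of \autoref{lemKDbddWD}.

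Part (i) is fine. $\|Z_J\|$ is $\s J^{-1/2}$ times a chi variable with $J$ degrees of freedom, so its density is $O(J^{1/2})$ uniformly in $s$. This gives $O_\pr(n^{-1/4}J)=o_\pr(1)$ under $w>6$, for both the Gaussian and bootstrap comparisons.

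The gap is part (ii). There the anti-concentration step is left open, and the tools you propose cannot close it. Nazarov's inequality and the Chernozhukov--Chetverikov--Kato bound both need a floor $\underline{\s}>0$ on the pointwise standard deviation. The CCK version stated via $\eo\sup$ still has constants depending on $\underline{\s}$ when the variance is not constant. Here $\var\, Z_J(u)=\s^2J^{-1}\sum_{j\le J}\hat\phi_j(u)^2$, with data-dependent $\W^{1,2}$-orthonormal $\hat\phi_j$. Nothing in (C1)--(C4) bounds $\inf_u\sum_{j\le J}\hat\phi_j(u)^2$ away from zero, let alone uniformly in $J$. Yet that is exactly what your claimed bound $C\epsilon J^{1/2}\sqrt{\log J}$ would require. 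As outlined, the argument for the sup statistic does not go through.

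The missing idea is to avoid pointwise variances altogether and work in coefficient space.
\begin{itemize}
\item Write $G_J=\sum_{j\le J}\xi_j\hat\phi_j$ with $\bm\xi\sim\nd(\bm 0,\bm I_J)$. Then $\sup_u|G_J(u)|=\|\bm\xi\|_\phi$, where $\|\bm v\|_\phi\equiv\sup_u|\bm v^\top\bm\phi(u)|$ is a norm on $\R^J$.
\item Hence $\{\sup_u|G_J(u)|\le r\}=\{\bm\xi\in B_{\phi,r}\}$ for a convex body $B_{\phi,r}$. Moreover $B_{\phi,r+\e}$ lies inside the Euclidean $\e/C_1$-enlargement of $B_{\phi,r}$, where $C_1$ is the norm-equivalence constant.
\item Ball's (1993) bound on the Gaussian measure of boundary layers of convex sets in $\R^J$ then gives a density bound of order $J^{1/4}$ for $\sup_u|G_J(u)|$ (\autoref{lemSupDensity}). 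This holds regardless of how $u\mapsto\var\, G_J(u)$ behaves.
\item Combined with \autoref{lemKDbddWD}, the squared Kolmogorov distance is of order $J^{1/4}\cdot J^{1/2}\,\mathsf{W}^X(T_J,Z_J)=O_\pr(n^{-1/2}J^{7/4})$. The bootstrap side adds $J^{3/4}\,\mathsf{W}^X(T_J,T_J^*)$. This product is where the factor $J^{3/4}$ comes from, and both terms vanish under $H_0$ and $w>6$.
\end{itemize}
If you follow this route, keep track of $C_1=\min_{\|\bm v\|_{\R^J}=1}\|\bm v\|_\phi$. It enters the density bound as $CJ^{1/4}/C_1$. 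This constant, not a variance floor, is the structural quantity you need to control.
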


	We made several observations about these test statistics.
	First, the supremum statistic $S_{\mathrm{sup},J}$ is analyzed for the first time in our work.
	Although its validity for testing $H_0:\beta=0$ does not follow directly from classical theory,
	it emerges immediately from our approximation result in \autoref{thmMain} 
	together with (i) continuity of the supremum norm mapping on $\W^{1,2}([0,1])$ ensured by an embedding theorem 
	and (ii) the continuity and boundedness of the density of the absolute supremum of a Gaussian process; 
	see Lemmas~\ref{lemSobolevEmb} and~\ref{lemSupDensity} of the Appendix for these observations, respectively.
	The extra condition $J^{3/4} \sum_{j>J} \g_j \langle \beta, \phi_j \rangle^2 = o(1)$ is mild, which covers most decay rates;
		for instance, if $\g_j \asymp j^{-a}$ and $|\langle \beta, \phi_j \rangle| \asymp j^{-b}$ for some $a>1$ and $b>1/2$, then it is satisfied when $a+2b>7/4$.
	On the other hand, the $L^2$ statistic $S_{\mathrm{sq,J}}$ is essentially equivalent to those used by \citet{CFMS03, HMV13, lei14, KSM16}.
	Its sampling distribution is shown to be approximated with a (scaled) chi-square distribution with $J$ degrees of freedom, 
	as established by prior work such as \cite{KSM16} and \cite{KH24} under stringent assumptions,
	where the bootstrap result in the latter work can be a direct consequence of \autoref{thmMain}.
	More generally, \autoref{thmMain} combined with \autoref{lemWSDcont} of the Appendix implies that a bootstrap approximation under the Wasserstein distance hold for any Lipschitz continuous function of $T_J$.

	To facilitate numerical comparison with existing test procedures, we adopt the simulation settings of \cite{LL25}, 
	enabling us to compare our test statistics 
	with their proposed test and 
	with the methods by \cite{HMV13, lei14, SC15} that \cite{LL25} have been already assessed.
	Among these, only the test by \cite{SC15} does not rely on FPCs.
	We call these four tests the benchmark methods in what follows.
	We generate random samples $\{(Y_i,X_i)\}_{i=1}^n$ of size $n \in \{50, 200\}$ from the SoFR model \eqref{eq_sofr}, imposing $\eo[X]=0$ and $\eo[Y]=0$, hence $\ap=0$, for simplicity. 
	The functional regressors $\{X_i\}_{i=1}^n$ are independent centered Gaussian processes with the Mat\'{e}rn covariance function
	\begin{align*}
		C(u_1,u_2)
		= C(u_1,u_2; \ka^2, \rho_{\mathrm{Mat}}, \nu) 
		\equiv \ka^2 {2^{1-\nu} \over \ga_{\mathrm{func}}(\nu)} \left( {\sqrt{2\nu} |u_1-u_2| \over \rho_{\mathrm{Mat}}} \right)^\nu K_\nu \left( {\sqrt{2\nu} |u_1-u_2| \over \rho_{\mathrm{Mat}}} \right)
	\end{align*}
	for $(u_1,u_2)^\top \in [0,1]^2$,
	where $\ga_{\mathrm{func}}$ denotes the gamma function, $K_\nu$ is the modified Bessel function of the second kind, and the corresponding covariance operator is given by the integral operator as $(\ga x)(u) = \int_0^1 C(u,u') x(u') du'$ for $u \in [0,1]$ and $x \in L^2([0,1])$.
	We fix $(\ka^2, \rho_{\mathrm{Mat}}) = (1,1)$ but vary $\nu \in \{1/2,1,3/2\}$ to see the effects of the smoothness of $\{X_i\}_{i=1}^n$. 
	%	$\nu=2$ (smoother), $\nu=1/4$ (wiggler). 
	%very smooth: 1st eigencomponent has 83.61\% explanation
	Following \cite{LL25}, we generate the errors from the Laplacian distribution with zero mean and unit variance. 
	We use the set $\{\phi_{\mathrm{tri},j}\}_{j=1}^\infty$ of trigonometric functions for constructing slope functions:
	\begin{align}
		\phi_{\mathrm{tri},1}(u) = 1, \ \ 
		\phi_{\mathrm{tri},2m}(u) = \sqrt{2} \sin (2m\pi u), \ \ 
		\phi_{\mathrm{tri},2m+1}(u) = \sqrt{2} \cos (2m\pi u), \label{eqBasisTri}
	\end{align}
	for $u \in [0,1]$ and $m \in \N$.
	The true slope parameter is $\beta = c \beta_1$, where the degree $c$ of alternatives range over $c \in \{0, 0.2, 0.4, 0.6, 0.8, 1\}$ and the alternative function $\beta_1$ is considered among the following:
	\begin{itemize}
		\item (sparsest) $\beta_1(u) = 1$;
		
		\item (sparse) $\beta_1(u) = \sum_{j=1}^3 {11 \over 4} (j+2)^{-1} \phi_{\mathrm{tri},j}(u)$;
		
		\item (dense) $\beta_1(u) = \sum_{j=1}^{100} {11 \over 4} (j+2)^{-1} \phi_{\mathrm{tri},j}(u)$; and
		
		\item (densest) $\beta_1(u) = {6 \over 4} u^2 e^u$.
		
	\end{itemize}
		As suggested by a reviewer, we have expanded simulation design by adding extra scenarios as follows:
		\begin{itemize}
			\item (smoother) $\beta_1(u) = \sum_{j=1}^{100} 20(2+j)^{-3} \phi_{\mathrm{tri},j}(u)$;
			\item (smoothest) $\beta_1(u) = \sum_{j=1}^{100} 200(2+j)^{-5} \phi_{\mathrm{tri},j}(u)$; and
			\item (exponential) $\beta_1(u) = \sum_{j=1}^{100} 3e^{-j} \phi_{\mathrm{tri},j}(u)$.
	\end{itemize}

	All functions are evaluated at 200 equi-distant time grid points on $[0,1]$,
	the tests are conducted at significance level 0.05,
	the experiments are repeated with 1,000 iterations,
	and the number of bootstrap resamples are set to be 1,000.
	We record whether the bootstrap tests based on the statistics in \eqref{eqStatTest} reject the null $H_0:\beta=0$ and compute the averages of the rejection indicators to present empirical rejection rates.

		The Gaussian approximations could also be used to construct tests after estimating the error variance $\s^2$. 
		For instance, one may use $\hat{\s}_J^2 \equiv n^{-1} \sum_{i=1}^n (Y_i - \bar{Y} - \langle \hat{\beta}_J, X_i - \bar{X} \rangle)^2$, 
		which is consistent for $\s^2$ whenever $\hat{\beta}_J$ is consistent for $\beta$ \cite[cf.][Corollary~S1]{YDN23RB}. 
		However, as observed in \cite{YDN23RB} for mean-response inference,
		inference based directly on such Gaussian approximations are expected to perform poorly in finite samples. 
		For this reason, we do not report these Gaussian-approximation tests.

		The truncation level $J$ is selected by the minimum among the candidates $\jj \equiv \{1, \dots, 49\}$ whose fractions of variance explained exceed a certain threshold $\rho \in (0,1)$ \cite[cf.][Section~12.2]{KR17}.
		Specifically, 
		we select the truncation level as
		\begin{align}
			\widehat{J} 
			= \widehat{J} (\rho)
			\equiv \argmin_{J \in \jj} \left\{ {\sum_{j=1}^J \hat{\g}_j \over \sum_{j=1}^{n-1} \hat{\g}_j} \geq \rho \right\}. \label{eqFVE}
		\end{align}
		We report the results when picking $\rho=0.75$, as the subsequent tests perform well in general.
		See \autoref{sec_conc} for more discussion on the choice of the truncation level for FPCR estimation.

		\begin{figure}[b!]
			\centering
			\includegraphics[width=0.89\linewidth]{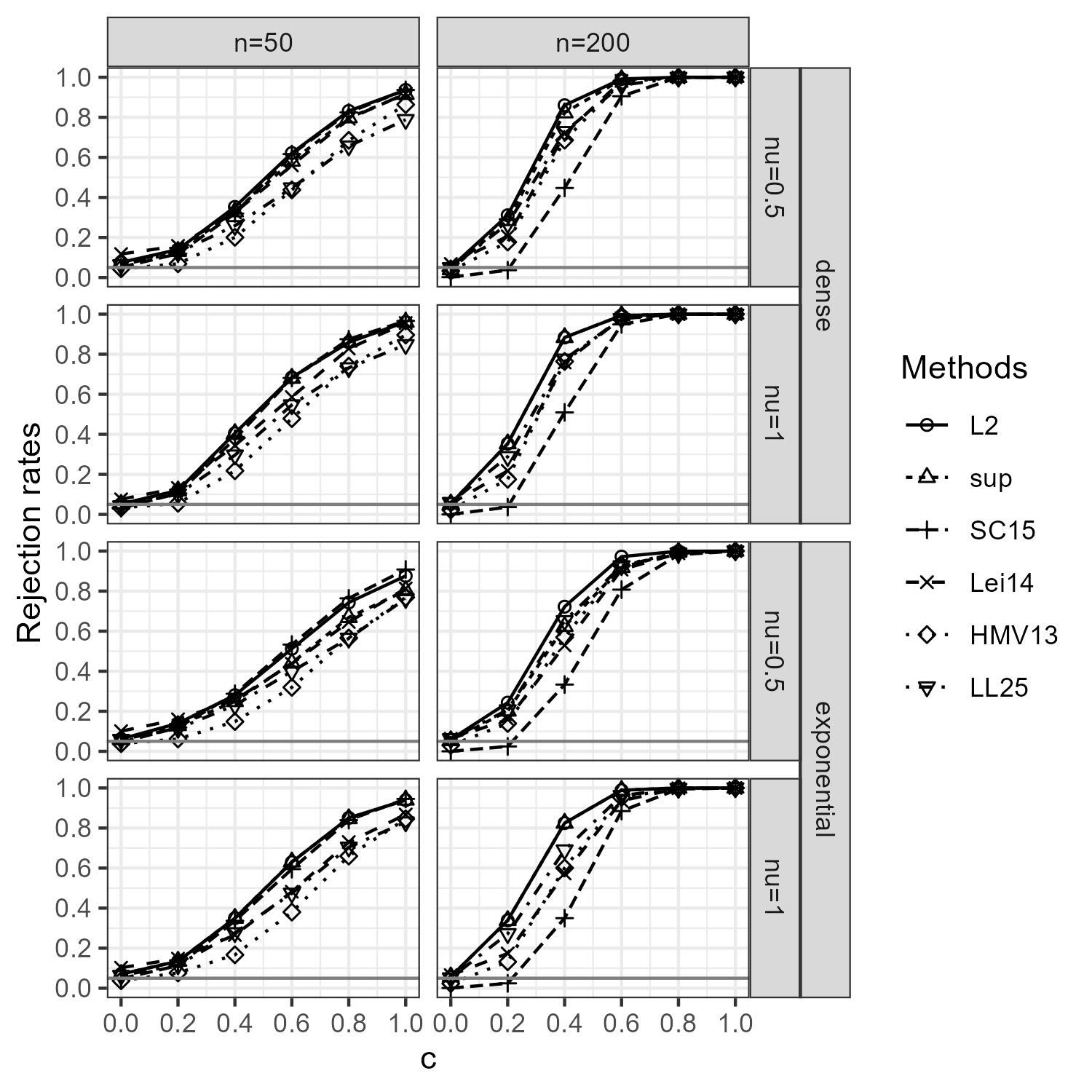}
			\caption{
				Empirical rejection rates of the bootstrap tests using statistics  $S_{\mathrm{sq},J}$ and $S_{\mathrm{sup},J}$ in \eqref{eqStatTest} and the benchmark tests.
			}
			\label{fig_test1sofr}
		\end{figure}

		\autoref{fig_test1sofr} presents the power curves for the tests based on \(S_{\mathrm{sq},J}\) and \(S_{\mathrm{sup},J}\) in \eqref{eqStatTest}, 
		with critical values calibrated via the bootstrap approximations in \autoref{thmTest}. 
		While only selected scenarios are displayed here, the full results are provided in \autoref{fig_test1sofr_appendix}. 
		Notably, unlike the conservative sizes of certain benchmark methods, 
		our bootstrap tests maintain empirical sizes much closer to the nominal 0.05 level. 
		This confirms that the approximations in Theorems~\ref{thmMain}--\ref{thmTest} are highly accurate. 
		Furthermore, both proposed tests display substantial power even under weak signals (\(c=0.2\) or \(c=0.4\)). 
		%	particularly when the sample size is small as $n=50$. 
		Overall, the power of all tests increases with the smoothness of \(X\) (as \(\nu \) increases), 
		but our bootstrap tests consistently outperform or remain competitive across all scenarios.
		
		The average computation times for each test across sample sizes $n \in \{50,200\}$ over 100 replications are reported in \autoref{tb_computing}.
		The proposed bootstrap tests require reasonable times for computing; the procedure takes only a few minutes even without parallelization. This is faster than the unparallelized version of the approach by \cite{LL25} and can even yield a lower computation time than the resampling-free approach of \cite{SC15}.

		\subsection{Confidence interval for a projection} \label{ssecProj}

		Inference for projections of $\beta$ is another important problem in SoFR models. 
		The projection $\langle \beta, v \rangle$ onto a direction $v \in \HH$ can be interpreted as a functional version of the classical contrast;
		it represents the contribution of $v$ to $\beta$.
		%	and it is closely related to the conditional mean response $\mu(v) \equiv \eo[Y|X=v] = \ap + \langle \beta, v \rangle$ at $v$. 
		This problem has been studied in different ways. 
		\cite{CMS07} firstly provide a CLT for $\langle \beta, v \rangle$ when $v$ is either random or non-random. 
		Both \cite{GM11} and \cite{KH16a} follows the framework of \cite{CMS07} with non-random $v$,
		in which case the assumption that 
		\begin{align}
			\sup_{j \in \N}\g_j^{-1} \langle v,\phi_j \rangle^2<\infty \label{condProjOther}
		\end{align}
		is commonly imposed.
		In simpler words, this means that the function $v$ is relatively smooth compared to the random function $X$,
		which facilitates deriving asymptotic normality results there. 
		Recently, \cite{YDN23RB} suggest a residual bootstrap for this problem with random $v$. 
		See \autoref{ssecFurtherDiscussionProjInfer} for further discussion of the existing literature on projection inference.

		In this section, we demonstrate that our main theoretical development naturally induces inference results for projection $\langle \beta, v \rangle$ outside the frameworks in these previous works.
		Namely, our result below is applicable to the non-random projection direction $v \in \HH$ such that the condition in \eqref{condProjOther} is not satisfied,
		which may not be straightforward from the classical analysis.
		To state the theorem, let $v \in \HH \bsh \{0\}$ and we introduce projection statistics
		\begin{alignat}{3} \label{eqStatProj}
			T_{\mathrm{proj},J}(v) 
			& \equiv \sqrt{n \over \hat{\tau}_J(v)} (\langle \hat{\beta}_J,  v \rangle - \langle \beta, v \rangle), 
			&& \qquad T_{\mathrm{proj},J}^*(v)
			&& \equiv \sqrt{n \over \hat{\tau}_J(v)} (\langle \hat{\beta}_J^*, v \rangle - \langle \hat{\beta}_J, v \rangle),
		\end{alignat}
		%	\begin{align*}
			%		T_{\mathrm{proj},J}(v) & \equiv \sqrt{n \over \hat{\tau}_J(v)} \langle \hat{\beta}_J - \beta, v \rangle, \\
			%		T_{\mathrm{proj},J}^*(v) & \equiv \sqrt{n \over \hat{\tau}_J(v)} \langle \hat{\beta}_J^* - \hat{\beta}_J, v \rangle,
			%	\end{align*}
		where $\hat{\tau}_J(v) \equiv \sum_{j=1}^J \hat{\g}_j^{-1} \langle v, \hat{\phi}_j \rangle^2$.

		\begin{thm} \label{thmProj}
			In addition to the assumptions for \autoref{thmMain}(a), we suppose the following:
			\begin{enumerate}[label=(P\arabic*)]
				\item $J = O ( \tau_J(v) )$, where $\tau_J(v) \equiv  \sum_{j=1}^J \g_j^{-1} \langle v, \phi_j \rangle^2$; \label{condProjScale}
				
				\item $J^{7+\dt} \asymp n$ for some $\dt \in (0,\infty)$; and \label{condProjRate}
				
				\item $J^{3+\dt/2} \max \left\{ \sum_{j>J} \langle \beta, \phi_j \rangle^2, \sum_{j>J} \langle v, \phi_j \rangle^2 \right\} = o(1)$. \label{condProjSmooth}
				
			\end{enumerate}
			Then, 
			the Gaussian and bootstrap approximations are valid for the projection statistic $T_{\mathrm{proj},J}(v)$ \eqref{eqStatProj} onto $v$ in the sense that 
			\begin{align*}
				\sup_{s \in \R} | \pr ( T_{\mathrm{proj},J}(v) \leq s |\xx_n ) - \Phi(s/\s) | & \xrightarrow{\pr} 0, \\
				\sup_{s \in \R} | \pr ( T_{\mathrm{proj},J}(v) \leq s |\xx_n ) 
				- \pr^* ( T_{\mathrm{proj},J}^*(v) \leq s |\xx_n ) | & \xrightarrow{\pr} 0.
			\end{align*}

		\end{thm}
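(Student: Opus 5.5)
Reducing $T_{\mathrm{proj},J}(v)$ to a linear functional of the operator-scaled statistic $T_J$ is the core of the plan. Set $\hat v_J \equiv \widehat{\ga}_J^{-1/2} v = \sum_{j=1}^J \hat{\g}_j^{-1/2}\langle v,\hat\phi_j\rangle \hat\phi_j$. Since $\hat\beta_J - \hat\beta_J^*$ and $\hat\beta_J$ lie in $\mathrm{span}\{\hat\phi_j\}_{j\le J}$, we can write
\begin{align*}
\langle \hat\beta_J - \beta, v\rangle = \langle \widehat{\ga}_J^{1/2}(\hat\beta_J-\beta), \hat v_J\rangle - \langle \beta, (I-\widehat{\Pi}_J) v\rangle .
\end{align*}
Note that $\|\hat v_J\|^2 = \hat\tau_J(v)$. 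It follows that
\begin{align*}
T_{\mathrm{proj},J}(v) = \sqrt{J}\,\langle T_J, \hat v_J/\|\hat v_J\|\rangle - \sqrt{n/\hat\tau_J(v)}\,\langle \beta,(I-\widehat{\Pi}_J)v\rangle ,
\end{align*}
and the bootstrap analogue $T^*_{\mathrm{proj},J}(v) = \sqrt J\langle T_J^*, \hat v_J/\|\hat v_J\|\rangle$ holds exactly, with no remainder. The map $x\mapsto \sqrt J\langle x, u\rangle$ for a unit vector $u$ that is $\xx_n$-measurable is $\sqrt J$-Lipschitz. By \autoref{thmMain}(a), $\wsd^X(\sqrt J\langle T_J,u\rangle, \s\langle G_J,u\rangle) \le \sqrt J\, O_\pr(n^{-1/2}J) = O_\pr(J^{3/2}n^{-1/2})$, which is $o_\pr(1)$ under \ref{condProjRate}. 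Moreover, $\langle G_J,u\rangle \sim \nd(0,1)$ conditionally, because $u\in\mathrm{range}(\widehat\Pi_J)$.

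The bootstrap part follows the same route through \autoref{thmMain}(b). The bound there is multiplied by $\sqrt J$ and becomes $O_\pr(J^{1/2}n^{-1/2}) + O_\pr(J^{3/2}/n) + O(\sqrt J\sum_{j>J}\g_j\langle\beta,\phi_j\rangle^2)$. The last term is $o(1)$ by \ref{condProjSmooth}, since $\g_j$ is bounded. Wasserstein convergence to a fixed $\nd(0,\s^2)$ law, whose distribution function is continuous, upgrades to Kolmogorov convergence. The triangle inequality then combines the Gaussian and bootstrap statements, provided the bias remainder below is negligible in probability (by Slutsky, with the normal limit's continuity).

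The main obstacle is the bias remainder $R \equiv \sqrt{n/\hat\tau_J(v)}\,\langle \beta,(I-\widehat\Pi_J)v\rangle$, which needs two ingredients. First, $\hat\tau_J(v)/\tau_J(v)\to 1$ in probability, which gives $\hat\tau_J(v)\gtrsim J$ by \ref{condProjScale}. I would prove this by expanding $\hat\g_j^{-1}\langle v,\hat\phi_j\rangle^2 - \g_j^{-1}\langle v,\phi_j\rangle^2$ with the perturbation bounds of \autoref{appTechBias} (\autoref{lemPerturb2}, the contour-integral representation used under \ref{condBasicTrunc}). Relative errors $|\hat\g_j/\g_j - 1| = O_\pr(n^{-1/2})$ uniformly, which follows from \ref{condMomentX}, together with $\|\hat\phi_j-\phi_j\| = O_\pr(n^{-1/2}\dt_j^{-1})$, should give a relative error that is $o_\pr(1)$ after dividing by $\tau_J(v)\gtrsim J$. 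The delicate piece is the cross term $\sum_{j\le J}\g_j^{-1}\langle v,\phi_j\rangle\langle v,\hat\phi_j-\phi_j\rangle$. For it I would use Cauchy--Schwarz against $\tau_J(v)^{1/2}$ together with the convexity lemma (\autoref{lemCMS07}(a)).

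Second, for $\langle\beta,(I-\widehat\Pi_J)v\rangle$, I would split $I-\widehat\Pi_J = (I-\Pi_J) + (\Pi_J-\widehat\Pi_J)$. The first piece is bounded by $(\sum_{j>J}\langle\beta,\phi_j\rangle^2)^{1/2}(\sum_{j>J}\langle v,\phi_j\rangle^2)^{1/2} = o(J^{-3-\dt/2})$ by \ref{condProjSmooth}. Multiplying by $\sqrt{n/J}\asymp J^{3+\dt/2}$, using \ref{condProjRate}, gives $o(1)$; this exponent matching is presumably why the conditions take their stated form. The second piece needs $\|\widehat\Pi_J-\Pi_J\|$ paired with the tails of $\beta$ and $v$. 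I would bound $\langle (\widehat\Pi_J-\Pi_J)v,\beta\rangle$ by splitting $\beta$ and $v$ into their components in $\Pi_J$ and $I-\Pi_J$; the in-span/in-span part vanishes to first order. The remaining cross parts are controlled by $O_\pr(n^{-1/2}J^{c})$ times the tail norms, again using \ref{condProjSmooth} and \ref{condProjRate}. This projection-perturbation estimate is where I expect most of the technical work to lie.
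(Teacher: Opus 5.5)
Your proposal is correct and follows the paper's proof essentially step for step: the same reduction of $T_{\mathrm{proj},J}(v)$ to $\sqrt{J}\,\langle T_J,\widehat{\ga}_J^{-1/2}v\rangle/\hat{\tau}_J(v)^{1/2}$ minus an $\xx_n$-measurable bias (with a bias-free bootstrap analogue), the same $\sqrt{J}$-Lipschitz transfer of the rates in \autoref{thmMain} followed by the bounded-normal-density passage to Kolmogorov distance, and essentially the same quadratic/cross/tail--tail split of the bias controlled by \autoref{lemPerturb2}(a), \ref{condProjRate} and \ref{condProjSmooth}, the only difference being that the paper gets $\hat{\tau}_J(v)/\tau_J(v)\to1$ in one line from $|\hat{\tau}_J(v)-\tau_J(v)|\le\opnorm{\widehat{\ga}_J^{-1}-\ga_J^{-1}}_\infty\|v\|^2$ and \autoref{lemPerturb2}(b) rather than by your componentwise eigenvalue/eigenfunction expansion. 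One caveat applies equally to the paper: the hypotheses cannot hold simultaneously, since \ref{condBasicTrunc} with \ref{condProjRate} gives $\g_j\ge\sum_{k=j}^{2j-1}\dt_k\ge j^{3/2}\bigl(\sum_{k<2j}\dt_k^{-2}\bigr)^{-1/2}$, i.e.\ $\g_j^{-1}=o(j^{2+\dt/2})$, while \ref{condProjSmooth} gives $\langle v,\phi_j\rangle^2=o(j^{-3-\dt/2})$, so $\tau_J(v)=o(\log J)$, contradicting \ref{condProjScale}.
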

		
		The proof of \autoref{thmProj} is provided in Appendix~\ref{appTechProj}. 
		This result provide cases where the approximations in \autoref{thmProj} hold even when $\sum_{j=1}^J \g_j^{-1} \langle v, \phi_j \rangle^2 \gtrsim J \to \infty$.
		Here, $r_j \gtrsim s_j$ means that $s_j/r_j$ is bounded above.
		The direction $v$ can be less smoother relative to $X$,
		which is not considered the current literature such as \citet[Theorem~3]{CMS07} or
		\citet[Theorem~1]{KH16a}.
		For instance, under polynomial decay with  $\g_j \asymp j^{-a}$, the direction $v$ allows to have coefficients with slower decay as $|\langle v, \phi_j \rangle| \gtrsim \g_j^{1/2} \asymp j^{-a/2}$,
		implying $\sum_{j=1}^J \g_j^{-1} \langle v, \phi_j \rangle^2 \gtrsim J \to\infty$.
		Condition~\ref{condProjScale} also can be beyond the assumption in \eqref{condProjOther}, for example, if $\g_j \asymp j^{-5}$ and $|\langle v, \phi_j \rangle| \asymp j^{-2}$, then $\sum_{j=1}^J \g_j^{-1} \langle v, \phi_j \rangle^2 \asymp J^2 \to\infty$ but $\sup_{j \in \N} \g_j^{-1} \langle v, \phi_j \rangle^2 = \infty$.
		Some other interpretations of these additional assumptions are listed. 
		Condition~\ref{condProjScale} is a deterministic version of the one $h_n t_{h_n}(X_0)^{-1} = O_\pr(1)$ assumed in \citet[Theorem~1]{YDN23RB}.
		Condition~\ref{condProjRate} simplifies the analysis, which can be relaxed, but where similar conditions are often assumed in the literature \citep{CMS07, HH07}.
		The last condition \ref{condProjSmooth} is needed to remove the bias term related to \(\sum_{j>J} \langle \beta, \phi_j \rangle \langle v, \phi_j \rangle\) 
		that appears where the projection direction $v$ interacts with the slope parameter $\beta$.

		\autoref{thmProj} provides an important inferential tool for projection $\langle\beta, v\rangle$.
		Specifically, denoting the $(1-\ap)$-quantile of $|T_{\mathrm{proj},J}^*(v)|$ by $\hat{q}_{1-\ap}(v)$ , 
		a (symmetrized) bootstrap confidence interval for $\langle \beta, v \rangle$ is constructed by
		\begin{align}
			CI(v)
			\equiv \left[ 
			\langle \hat{\beta}_J,v \rangle - \hat{q}_{1-\ap} \sqrt{\hat{\tau}_J(v) \over n}, 
			\langle \hat{\beta}_J,v \rangle + \hat{q}_{1-\ap} \sqrt{\hat{\tau}_J(v) \over n} 
			\right]. \label{eqProjCI}
		\end{align}

		We illustrate this result through numerical studies,
		where all functions are generated using the trigonometric functions in \eqref{eqBasisTri}. 
		In the settings of the studies in the previous section, 
		we generate the functional regressors $\{X_i\}_{i=1}^n$ through the Karhunen--Loève expansion
		to explicitly manipulate the decay rates:
		\begin{align} \label{eqKLexp}
			X \ed \sum_{j=1}^{100} \langle X, \phi_{\mathrm{tri},j} \rangle \phi_{\mathrm{tri},j}.
		\end{align}
		Specifically, with independent projections $\langle X, \phi_{\mathrm{tri},j} \rangle \sim \nd(0,\g_j)$, 
		we consider two eigenvalue decay rates as
		$\g_j = 100(2+j)^{-4}$ and $\g_j = 300(2+j)^{-5}$,
		and generate $\{X_i\}_{i=1}^n$ as iid copies of $X$ in \eqref{eqKLexp}. 
		The projection directions $v$ under considerations are (i) $v \equiv \sum_{j=1}^{100} j^{-2} \phi_{\mathrm{tri},j}$ and (ii) $v \in L^2([0,1])$ defined by an indicator as $v(u) = \I(0.4<u<0.6)$ for $u \in [0,1]$,
		which are noted by ``Smooth'' and ``Indicator'', respectively.
		The former direction is smoother, 
		while both do not fall into the assumption in \eqref{condProjOther} (cf.~\autoref{lemProjIndicator}). 
		Using the latter indicator direction leads naturally to a statistical test for the sign of $\int_{0.4}^{0.6} \beta(u) du$,
		which is relevant to the question of whether the slope function $\beta$ is positive or negative somewhere within the window $[0.4,0.6]$.
		With the other factors equal to the ones given \autoref{ssecTest},
		we approximate the coverage probabilities and the widths of the bootstrap interval in \eqref{eqProjCI} through Monte Carlo iterations.
		
		The empirical coverage probabilities of the 95\% bootstrap intervals for the projections \(\langle \beta, v \rangle\) are summarized in \autoref{tb_proj}, 
		along with their corresponding average widths in parentheses. 
		These results are based on a truncation level \(J\) chosen via the FVE criterion in \eqref{eqFVE} with %either \(\rho=0.75\) or
		\(\rho=0.95\). 
		In general, the coverage rates remain close to the nominal level 0.95, even for a small sample size of \(n=50\) in some scenarios. 
		%	As expected, slight undercoverage occurs in challenging scenarios,
		%	specifically when (i) the decay rate of \(\beta \) is slow, 
		%	or (ii) the decay of \(\gamma _{j}\) does not match that of \(|\langle \beta, \phi_j \rangle \langle v, \phi_j \rangle|\).
		%	Nevertheless, selecting larger truncation levels substantially improves these cases, leading to better coverage results.

		\begin{table}[htbp]
			\centering
			\caption{Empirical coverage rates of the 95\% bootstrap intervals for the projection $\langle \beta, v \rangle$
			}
			\label{tb_proj}
			\renewcommand{\arraystretch}{1.2}\medskip
			\resizebox{0.95\linewidth}{!}{\begin{tabular}{cc|ccc|ccc}
					\hline\hline
					\multicolumn{2}{c|}{$\g_j$} & \multicolumn{3}{c|}{$ 100(2+j)^{-4}$} & \multicolumn{3}{c}{$300(2+j)^{-5}$} \\
					\hline
					\multicolumn{2}{c|}{$|\langle \beta, \phi_j \rangle|$}  & $20(2+j)^{-3}$ & $200(2+j)^{-5}$ & $3e^{-j}$ & $20(2+j)^{-3}$ & $200(2+j)^{-5}$ & $3e^{-j}$ \\
					\hline
					%				\multirow{2}{*}{$\rho = 0.75$} 
					%				& $n = 50$  & 0.927 (0.760) & 0.942 (0.760) & 0.945 (0.760) & 0.916 (0.750) & 0.924 (0.758) & 0.904 (0.753) \\
					%				& $n = 200$ & 0.941 (0.385) & 0.944 (0.384) & 0.932 (0.386) & 0.913 (0.390) & 0.937 (0.388) & 0.874 (0.387) \\
					%				\hline
					\multirow{2}{*}{Smooth} 
					& $n = 50$  & 0.942 (0.816) & 0.931 (0.821) & 0.944 (0.822) & 0.931 (0.839) & 0.937 (0.845) & 0.933 (0.839) \\
					& $n = 200$ & 0.952 (0.416) & 0.942 (0.416) & 0.944 (0.417) & 0.951 (0.423) & 0.941 (0.422) & 0.934 (0.421) \\ \hline
					\multirow{2}{*}{Indicator} & $n=50$  & 0.931 (0.778) & 0.930 (0.789) & 0.924 (0.773) & 0.920 (0.649) & 0.930 (0.650) & 0.941 (0.646) \\
					& $n=200$ & 0.951 (0.469) & 0.955 (0.473) & 0.950 (0.472) & 0.951 (0.332) & 0.963 (0.332) & 0.947 (0.332) \\
					\hline\hline
			\end{tabular}}
		\end{table}
		
		We report results based on relatively large truncation levels, which are necessary because of the intricate interplay among $\beta$, $\g_j$, and $v$. 
		Although a small truncation level may be adequate for capturing the main variation in $X$, 
		it may be generally insufficient for inference on $\langle \beta, v \rangle$, since $\beta$ or $v$ may retain substantial values in later eigenfunction coefficients. 
		This illustrates that the choice of truncation level could need to be driven by the specific statistical objective, 
		rather than by variance explanation alone. 
		See the last paragraph of \autoref{sec_conc} for further discussion of truncation-level selection.

	\section{Conclusion} \label{sec_conc}
	
	It is worth emphasizing three features of our contribution: 
	(i) our main result is concise and conceptually transparent (\autoref{sec_main}),
	(ii) the testing procedures it motivates are remarkably simple to implement yet provide highly effective tools for statistical inference  (\autoref{ssecTest}),
	and 
	(iii) it also yield a new result for projection inference that goes beyond the classical framework (\autoref{ssecProj}).
	A key takeaway is that the FPCR estimator, although often regarded as a classical or even outdated approach, remains valuable 
	and, in fact, leads to powerful inferential tools for this challenging functional regression setting with infinite dimensionality. 
	We believe that the insights developed here will open the door to new avenues of research on inference for more complex functional regression models.

		We list several potential extensions. First, the ideas developed in this work can be extended to other functional regression models, including function-on-function regression \citep{CM13} and functional generalized linear models \citep{DPZ12}, 
		where inference tools for the entire slope function remain limited. 
		An extension to dependent data is also of clear interest, especially because quadratic-form test statistics are commonly studied under temporal \citep{HKR13} or spatial \citep{KKD24} dependence.
		More broadly, our findings suggest new directions for statistical inference in complex regression models with infinite-dimensional covariates, beyond standard functional regression. 
		Recent developments such as intrinsic Riemannian \citep{LY19}, Wasserstein \citep{CLM23}, and Hilbert--Schmidt \citep{CP25} regression models have gained increasing attention; 
		these approaches rely on FPCR-type estimators adapted to their respective settings and can be viewed as extensions of the FPCR estimator studied here. However, formal inferential foundations for these models are still largely unavailable. 
		The theoretical and methodological results of this paper provide a natural first step toward developing inference for regression models with density-valued, manifold-valued, or Riemannian functional predictors.
		%	including data on spheres, tori, and spaces of symmetric positive-definite matrices. 
		In a related direction, principal components analysis has recently been developed for other non-Euclidean or infinite-dimensional data objects, 
		such as infinite-dimensional spherical data \citep{Dai22}, flows of covariance operators \citep{SP26}, and replicated point processes \citep{PRRP26}. 
		These developments may lead to corresponding regression frameworks and FPCR-type estimators, for which the ideas in this paper could be adapted. 
		Although such extensions are beyond the scope of the present study, they warrant careful investigation in future work.

		As a closing remark, we discuss the choice of the truncation level $J$.
		This is still an open problem in the literature,
		and, to our knowledge, beyond the selection based on FVE, there is no formal work that studies this choice in general,
		although \cite{lei14} and \cite{HMV13} propose  adaptive choices focusing only on testing for $H_0:\beta=0$.
		The optimal choice of $J$ is likely to depend on the target task, such as hypothesis testing for $\beta$, inference for a projection $\langle \beta, v \rangle$, or prediction of the new response $Y_0$ at a new response $X_0$.
		For the testing problem, in unreported preliminary simulations, we observed that cross-validation led to unreliable inference for the problems considered here;
		for this reason, we do not include those results.
		The threshold value $\rho$ in the FVE criterion \eqref{eqFVE} does not appear to substantially affect the power performance of our tests, unless it is chosen to be extreme; 
		see \autoref{fig_test1sofr_FVE} for additional simulation results.
		For the latter two problems, a best choice for inference may depend on the direction $v$ or on the new regressor $X_0$, as discussed \cite{YDN25WB}. 
		In subsequent work, we plan to investigate this important problem in greater generality, together with associated inference procedures.

%\clearpage

%%%%%%%%%%%%%%%%%%%%%%%%%%%%%%%%%%%%%%%%%%%%%%
%% Single Appendix:                         %%
%%%%%%%%%%%%%%%%%%%%%%%%%%%%%%%%%%%%%%%%%%%%%%
%\begin{appendix}
%\section*{???}%% if no title is needed, leave empty \section*{}.
%\end{appendix}
%%%%%%%%%%%%%%%%%%%%%%%%%%%%%%%%%%%%%%%%%%%%%%
%% Multiple Appendixes:                     %%
%%%%%%%%%%%%%%%%%%%%%%%%%%%%%%%%%%%%%%%%%%%%%%
\begin{appendix}

\section{Preliminaries} \label{appTechPre}

This appendix mainly collects the technical details supporting the theorems.
It is organized into six parts. 
After introducing some notation in \autoref{appTechPre},
\autoref{appTechMain} establishes our main theorems including the Gaussian and bootstrap approximations. 
%The subsequent sections, \autoref{appTechVar} and \autoref{appTechBias}, contain the proofs for the variance and bias components, respectively.
The subsequent \autoref{appTechBias} includes functional calculus theory, 
while \autoref{appTechLem} gathers several ancillary lemmas used throughout the analysis.
Finally, in \autoref{secAddDiscuss}, we further discuss several theoretical points,
and  we provide extra simulation results in \autoref{appExtraSim} to conclude the Appendix.

We introduce new notation for further developments.
The cross-covariance functions of the regressors $\{X_i\}_{i=1}^n$ with the responses $\{Y_i\}_{i=1}^n$ and the errors $\{\e_i\}_{i=1}^n$ are defined by
\begin{alignat*}{3}
	\widehat{\Dt} 
	& \equiv n^{-1} \sum_{i=1}^n (X_i - \bar{X})(Y_i - \bar{Y}), 
	&& \qquad U_n 
	&& \equiv n^{-1} \sum_{i=1}^n (X_i - \bar{X})(\e_i - \bar{\e}),
\end{alignat*}
where $\bar{\e} \equiv n^{-1} \sum_{i=1}^n \e_i$. 
Their bootstrap counterparts are
\begin{alignat*}{3}
	\widehat{\Dt}^*
	& \equiv n^{-1} \sum_{i=1}^n (X_i - \bar{X})(Y_i^* - \bar{Y}^*), 
	&& \qquad U_n^* 
	&& \equiv n^{-1} \sum_{i=1}^n (X_i - \bar{X})(\e_i^* - \bar{\e}^*),
\end{alignat*}
where $\bar{Y}^* \equiv n^{-1} \sum_{i=1}^n Y_i^*$ and $\bar{\e}^* \equiv n^{-1} \sum_{i=1}^n \e_i^*$. 
At both levels, these quantities are related as 
\begin{align*}
	\widehat{\Dt} = \widehat{\ga}\beta + U_n
	\quad
	\text{and}
	\quad
	\widehat{\Dt}^* = \widehat{\ga}\hat{\beta}_J + U_n^*,
\end{align*}
respectively.
Also, for each $a \in \R$, we define the pseudo $a$-th powers of $\ga$ and $\widehat{\ga}$ by
\begin{align*}
	\ga_J^a 
	\equiv \sum_{j=1}^J \g_j^a \phi_j^{\otimes 2},
	\quad
	\widehat{\ga}_J^a 
	\equiv \sum_{j=1}^J \hat{\g}_j^a \hat{\phi}_j^{\otimes 2},
\end{align*}
respectively.
When $a=0$, these represent Riesz projection operators respectively given by 
\begin{align*}
	\Pi_J \equiv \ga_J^0 = \sum_{j=1}^J \phi_j^{\otimes 2},
	\quad 
	\widehat{\Pi}_J
	\equiv \widehat{\ga}_J^0
	\equiv \sum_{j=1}^J \hat{\phi}_j^{\otimes 2}.
\end{align*}

We start with a simple observation in \autoref{propFPCRequiv}, 
as we have not found an exact reference.
This explicitly guarantees that the slope estimators in various papers based on FPCs (e.g., \cite{CFS99, CMS07} versus \cite{KSM16} and \citet[Equation~(8.19)]{HK12}) are in fact the same.
Recall that the projected regressors are denoted by $\bm{X}_i \equiv [ \langle X_i, \hat{\phi}_j \rangle ]_{1 \leq j \leq J}$ for $i=1,\dots,n$.

\begin{prop} \label{propFPCRequiv}
	The fitted coefficients from the linear regression of $\{Y_i\}_{i=1}^n$ on $\{\bm{X}_i\}_{i=1}^n$
	can be represented as
	$\hat{\beta}_{\mathrm{coef},j} \equiv \hat{\g}_j^{-1} \langle \widehat{\Dt}, \hat{\phi}_j \rangle$ for $j =1,\dots,J$,
	further providing the following expression for the FPCR estimator $\hat{\beta}_J$ in \eqref{eq_fpcr_est}:
	\begin{align*}
		\hat{\beta}_J 
		\equiv \widehat{\ga}_J^{-1} \widehat{\Dt}
		= \sum_{j=1}^J \hat{\g}_j^{-1} \langle \widehat{\Dt}, \hat{\phi}_j \rangle \hat{\phi}_j.
	\end{align*}
\end{prop}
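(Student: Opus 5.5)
The plan is to solve the least-squares normal equations explicitly in the basis of empirical eigenfunctions, using the fact that the projected regressors are empirically uncorrelated. Because an intercept is present in the multiple linear regression, the slope coefficients are the ones obtained after centering. In other words, $\hat{\bm{\beta}}_{\mathrm{coef},J}$ minimizes $\sum_{i=1}^n (Y_i-\bar Y - (\bm{X}_i-\bar{\bm{X}})^\top \bm b)^2$ over $\bm b\in\R^J$, where $\bar{\bm X} = [\langle \bar X, \hat\phi_j\rangle]_{j\le J}$. The normal equations then read $\bm S \bm b = \bm s$, with
\begin{align*}
\bm S_{jk} = n^{-1}\sum_{i=1}^n \langle X_i-\bar X,\hat\phi_j\rangle\langle X_i-\bar X,\hat\phi_k\rangle,
\qquad
\bm s_j = n^{-1}\sum_{i=1}^n \langle X_i-\bar X,\hat\phi_j\rangle (Y_i-\bar Y).
\end{align*}

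The key step is to identify these entries through $\widehat{\ga}$ and $\widehat{\Dt}$. By the definition of the tensor product, $\bm S_{jk} = \langle \widehat{\ga}\hat\phi_j, \hat\phi_k\rangle$. Since $\widehat{\ga}\hat\phi_j = \hat\g_j\hat\phi_j$ and $\{\hat\phi_j\}$ is orthonormal, this gives $\bm S = \mathrm{diag}(\hat\g_1,\dots,\hat\g_J)$. Similarly, by linearity of the inner product, $\bm s_j = \langle \widehat{\Dt}, \hat\phi_j\rangle$.

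Provided $\hat\g_J>0$, $\bm S$ is invertible and the unique solution is $\hat\beta_{\mathrm{coef},j} = \hat\g_j^{-1}\langle\widehat{\Dt},\hat\phi_j\rangle$. Substituting into \eqref{eq_fpcr_est} gives $\hat\beta_J = \sum_{j\le J}\hat\g_j^{-1}\langle \widehat{\Dt},\hat\phi_j\rangle\hat\phi_j$. This equals $\widehat{\ga}_J^{-1}\widehat{\Dt}$ by the definition of the pseudo-power $\widehat{\ga}_J^{a}$ with $a=-1$.

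The only point needing care is the positivity $\hat\g_J>0$, which the statement implicitly requires. Under \ref{condA1ker}--\ref{condA2ev}, and with $J<n$ so that at most $n-1$ empirical eigenvalues can be positive, this holds with probability tending to one. I would note it as a standing convention rather than an obstacle; otherwise the argument is purely algebraic.
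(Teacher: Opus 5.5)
Your proposal is correct and is essentially the paper's own proof. Both solve the centered normal equations, identify the Gram matrix as $[\langle \widehat{\ga}\hat{\phi}_j,\hat{\phi}_k\rangle]_{j,k\le J} = \mathrm{diag}(\hat{\g}_1,\dots,\hat{\g}_J)$ and the cross-covariance vector as $[\langle \widehat{\Dt},\hat{\phi}_j\rangle]_{j\le J}$, and then invert. One correction concerns your side remark on $\hat{\g}_J>0$. Conditions \ref{condA1ker}--\ref{condA2ev} together with $J<n$ do not by themselves make this hold with probability tending to one: an atom in the law of $X$ caps the sample rank well below $n-1$. Condition \ref{condBasicTrunc} does guarantee it, since it forces $\g_J\geq\dt_J\gg n^{-1/2}$ while Weyl's inequality gives $|\hat{\g}_J-\g_J|\leq\opnorm{\widehat{\ga}-\ga}_\infty=O_\pr(n^{-1/2})$. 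The paper itself simply takes this invertibility for granted.
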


\begin{proof}
	Let
	\begin{alignat*}{3}
		\widehat{\bm{\ga}}_J 
		& \equiv n^{-1} \sum_{i=1}^n (\bm{X}_i - \bar{\bm{X}}) (\bm{X}_i - \bar{\bm{X}})^\top, 
		&& \qquad \widehat{\bm{\Dt}}
		&& \equiv n^{-1} \sum_{i=1}^n (\bm{X}_i - \bar{\bm{X}}) (Y_i - \bar{Y})
	\end{alignat*}
	denote the $J\times J$ covariance matrix of $\{\bm{X}_i\}_{i=1}^n$ and the $J$-dimensional cross-covariance vector between $\{\bm{X}_i\}_{i=1}^n$ and $\{Y_i\}_{i=1}^n$, respectively,
	where $\bar{\bm{X}} \equiv n^{-1} \sum_{i=1}^n \bm{X}_i = [ \langle \bar{X}, \hat{\phi}_j \rangle ]_{1 \leq j \leq J}$.
	Then, the least squares estimator from the regression of $\{Y_i\}_{i=1}^n$ on $\{\bm{X}_i\}_{i=1}^n$ is then
	\begin{align*}
		\hat{\bm{\beta}}_{\mathrm{coef},J}
		= \widehat{\bm{\ga}}_J^{-1} \widehat{\bm{\Dt}}.
	\end{align*}
	Note that the covariance matrix $\widehat{\bm{\ga}}_J $ is a diagonal matrix with the first $J$ eigenvalues being diagonal elements,
	as
	\begin{align*}
		\widehat{\bm{\ga}}_J  
		%		& = (\bm{X}_i-\bar{\bm{X}}) (\bm{X}_i-\bar{\bm{X}})^\top
		& = n^{-1} \sum_{i=1}^n \left( [ \langle X_i - \bar{X}, \hat{\phi}_j \rangle ]_{1 \leq j \leq J} \right) \left( [ \langle X_i - \bar{X}, \hat{\phi}_j \rangle ]_{1 \leq j \leq J} \right)^\top
		%		\\& = n^{-1} \sum_{i=1}^n [\langle X_i - \bar{X}, \hat{\phi}_j \rangle \langle X_i - \bar{X}, \hat{\phi}_{j'} \rangle]_{1 \leq j, j' \leq J}
		\\& =  n^{-1} \sum_{i=1}^n [ \langle (\langle X_i - \bar{X})^{\otimes 2} \hat{\phi}_j, \hat{\phi}_{j'} \rangle]_{1 \leq j, j' \leq J}
		= [\langle \widehat{\ga}  \hat{\phi}_j, \hat{\phi}_{j'} \rangle]_{1 \leq j, j' \leq J}
		= \mathrm{diag}([\hat{\g}_j]_{1 \leq j \leq J}).
	\end{align*}
	Similarly, the cross-covariance vector $\widehat{\bm{\Dt}}$ is expressed as
	\begin{align*}
		\widehat{\bm{\Dt}}
		& = n^{-1} \sum_{i=1}^n \left( [ \langle X_i - \bar{X}, \hat{\phi}_j \rangle ]_{1 \leq j \leq J} \right) (Y_i-\bar{Y})
		%		\\& = n^{-1} \sum_{i=1}^n \left( [ \langle (X_i - \bar{X})(Y_i-\bar{Y}), \hat{\phi}_j \rangle ]_{1 \leq j \leq J} \right) 
		= [ \langle \widehat{\Dt}, \hat{\phi}_j \rangle ]_{1 \leq j \leq J}. 
	\end{align*}
	Combining these two, we have $\hat{\bm{\beta}}_{\mathrm{coef},J} = [ \hat{\g}_j^{-1} \langle \widehat{\Dt}, \hat{\phi}_j \rangle ]_{1 \leq j \leq J}$, deriving the desired result.
\end{proof}

We introduce operator norms, $\opnorm{\cdot}_\infty$, $\opnorm{\cdot}_2$, and $\opnorm{\cdot}_1$,
which will be used throughout the Appendix.
For a bounded linear operator $\ttt$ on $\HH$, 
the supremum norm $\opnorm{\cdot}_\infty$ defined by
\begin{align*}
	\opnorm{\ttt}_\infty & \equiv \sup_{x \in \HH:\|x\|=1} \|\ttt x\|.
\end{align*}
For a compact operator $\ttt$ on $\HH$ with eigenvalues $\{\ld_j\}_{j=1}^\infty$ \cite[cf.][Chapter~4]{HE15}, 
its Hilbert--Schmidt and nuclear norms are defined by
\begin{alignat*}{3}
	\opnorm{\ttt}_2^2 
	& \equiv \sum_{j=1}^\infty \ld_j^2, 
	&& \qquad \opnorm{\ttt}_1
	&& \equiv \sum_{j=1}^\infty |\ld_j|,
\end{alignat*}
respectively,
when the sums are finite.
These norms are related as $\opnorm{\cdot}_\infty \leq \opnorm{\cdot}_2 \leq \opnorm{\cdot}_1$;
see \citet[Equation~(1.55)]{Bosq00} for example.

In what follows, we assume that $\eo[X] = 0$ and $\eo[Y]=0$, implying $\ap=0$, which does not lose generality for theoretical development.

%The operator-scaled functional statistic $T_J$ in \eqref{eqStat} is decomposed as
%\begin{align}
%	\sqrt{n/J} \widehat{\ga}^{1/2} (\hat{\beta}_J - \beta)
%	& = \sqrt{n/J}  \ga_J^{-1/2} U_n  \nonumber
%	\\& \hspace{12pt} + \sqrt{n/J} (\widehat{\ga}_J^{1/2} - \ga_J^{1/2}) (\widehat{\ga}_J^{-1} - \ga_J^{-1}) U_n 
%	+ \sqrt{n/J} \ga_J^{1/2} (\widehat{\ga}_J^{-1} - \ga_J^{-1}) U_n \nonumber
%	\\& \hspace{12pt} + \sqrt{n/J}  (\widehat{\ga}_J^{1/2} - \ga_J^{1/2}) (\widehat{\Pi}_J - \Pi_J) \beta
%	+ \sqrt{n/J}  \ga_J^{1/2} (\widehat{\Pi}_J - \Pi_J) \beta \nonumber
%	\\& \hspace{12pt} + \sqrt{n/J}  (\widehat{\ga}_J^{1/2}-\ga_J^{1/2}) (\Pi_J \beta - \beta) 
%	+ \sqrt{n/J}  \ga_J^{1/2} (\Pi_J \beta - \beta). \label{eqStatDecomp}
%\end{align}
%We will derive the Gaussian approximation of the first term in \eqref{eqStatDecomp} in the conditional Wasserstein distance and the convergence in zero of the other terms.

\section{Proof of the theorems} \label{appTechMain}

\subsection{Proof of \autoref{thmMain}} \label{appTechMain1}
To investigate our approximation results, 
we see how the original and bootstrap statistics are decomposed. 
At the data-level, 
using $\widehat{\Dt} = \widehat{\ga}\beta + U_n$, 
the FPCR estimator is decomposed as
\begin{align*}
	\hat{\beta}_J - \beta 
	= \widehat{\ga}_J^{-1} \widehat{\Dt} - \beta
	= \widehat{\ga}_J^{-1} U_n + (\widehat{\Pi}_J-I) \beta,
\end{align*}
where these terms serve as the variance and bias parts in the asymptotic analysis; see \citet[Equation~(11)]{CMS07} and \citet[Equation~(11)]{YDN23RB} for similar decompositions.
Unlike these previous works, our operator-scaling $\widehat{\ga}_J^{1/2}$ not only stabilize the variance term $\widehat{\ga}_J^{-1} U_n$ but also deterministically remove the bias term as $\widehat{\ga}_J^{1/2}(\widehat{\Pi}_J-I) = 0$. 
On the other hand, by construction, the bootstrap FPCR estimator does not leave any bias term regardless of scaling,
as
\begin{align*}
	\hat{\beta}_J^* - \hat{\beta}_J
	& = \widehat{\ga}_J^{-1} \widehat{\Dt}^* - \hat{\beta}_J
	= \widehat{\ga}_J^{-1} U_n^* + \widehat{\Pi}_J\hat{\beta}_J - \hat{\beta}_J,
	\\& = \widehat{\ga}_J^{-1} U_n^*.
\end{align*}
To summarize, 
the operator-scaled functional statistics $T_J$ in \eqref{eqStat} and its bootstrap counterpart $T_J^*$ in \eqref{eqStatBTS} are represented as
\begin{align}
	T_J 
	& \equiv \sqrt{n/J} \widehat{\ga}_J^{1/2} (\hat{\beta}_J - \beta)
	= \sqrt{n/J}  \widehat{\ga}_J^{-1/2} U_n, \label{eqDecompStat} \\
	T_J^*
	& \equiv \sqrt{n/J} \widehat{\ga}_J^{1/2} (\hat{\beta}_J^* - \hat{\beta}_J)
	%	& = \sqrt{n/J} \widehat{\ga}_J^{1/2} (\widehat{\ga}_J^{-1} U_n^* + \widehat{\Pi}_J \hat{\beta}_J - \hat{\beta}_J)  
	= \sqrt{n/J}  \widehat{\ga}_J^{-1/2} U_n^*, \label{eqDecompStatBTS}
\end{align}
where both consists only of their respective variance terms.
Let $Q$ denote the common distribution of the errors $\{\e_i\}_{i=1}^n$
and $\widehat{Q}$ represent the (discrete) uniform distribution on $\{\hat{\e}_i\}_{i=1}^n$. 
With this background, the proofs of the theorems are given next.

\begin{proof}[Proof of \autoref{thmMain}(a)]	
	To derive the Gaussian approximation, 
	observing that $$\widehat{\ga}_J^{-1/2} U_n  = \sum_{j=1}^J \hat{\g}_j^{-1/2} \langle U_n, \hat{\phi}_j \rangle \hat{\phi}_j,$$
	we investigate the behavior of the coefficients $[\hat{\g}_j^{-1/2} \langle U_n, \hat{\phi}_j \rangle]_{1 \leq j \leq J}$ instead of $\ga_J^{-1/2} U_n$ itself. 
	For $i=1,\dots,n$, we write 
	\begin{align*}
		Z_{ij} = \hat{\g}_j^{-1/2} \langle X_i-\bar{X}, \hat{\phi}_j \rangle\e_i,
		\quad j=1,\dots,J,
		\quad
		\bm{Z}_{iJ} \equiv [\hat{\g}_j^{-1/2} \langle X_i-\bar{X}, \hat{\phi}_j \rangle \e_i]_{1 \leq j \leq J} 
	\end{align*}
	so that $\sqrt{n}\bar{\bm{Z}}_J = [\hat{\g}_j^{-1/2}\langle \sqrt{n}U_n, \hat{\phi}_j \rangle]_{1 \leq j \leq J}$, where $\bar{\bm{Z}}_J \equiv n^{-1} \sum_{i=1}^n \bm{Z}_{iJ}$. 
	Then, we have $\eo^X[\bm{Z}_{iJ}] = \bm{0}$ and $n^{-1} \sum_{i=1}^n \var^X[\bm{Z}_{iJ}] = \s^2 \bm{I}_J$.
	Also, we derive
	\begin{align*}
		\eo^X[\|\bm{Z}_{iJ}\|_{\R^J}^4]
		& = \eo^X \left[ \left( \e_i^2 \sum_{j=1}^J \hat{\g}_j^{-1} \langle X_i - \bar{X},\hat{\phi}_j \rangle^2 \right)^2 \right]
		\\& = \eo^X[\e_1^4] \langle \widehat{\ga}_J^{-1} (X_i - \bar{X}), (X_i - \bar{X}) \rangle^2
		\\& \leq 2\eo^X[\e_1^4] \langle (\widehat{\ga}_J^{-1}-\ga_J^{-1})(X_i - \bar{X}), (X_i - \bar{X}) \rangle^2
		\\& \hspace{12pt} + 2\eo^X[\e_1^4] \langle \ga_J^{-1}(X_i - \bar{X}), (X_i - \bar{X}) \rangle^2
		\\& \leq 2\eo^X[\e_1^4] \left\{ \opnorm{\widehat{\ga}_J^{-1}-\ga_J^{-1}}_\infty^2 \|X_i - \bar{X}\|^4 + \left( \sum_{j=1}^J \g_j^{-1} \langle X_i - \bar{X}, \phi_j \rangle^2 \right)^2 \right\}
		\\& \leq 2\eo^X[\e_1^4] \left\{ 8\opnorm{\widehat{\ga}_J^{-1}-\ga_J^{-1}}_\infty^2 (\|X_i\|^4 + \|\bar{X}\|^4) + J \sum_{j=1}^J \g_j^{-2} \langle X_i - \bar{X}, \phi_j \rangle^4 \right\}.
	\end{align*}
	
	Using the assumptions, 
	recall from \autoref{lemPerturb2} that 
	$\opnorm{\widehat{\ga}_J^{-1} - \ga_J^{-1}}_\infty = D_{1n} + D_{2n}$, where $\eo[D_{1n}] = o(n^{-1/2} J^3 (\log J)^2)^{1/2}$ and $r_nD_{2n} = o_\pr(1)$ for any random quantity $r_n$. 
	Note that the term $D_{2n}$ absorbs any terms, thus not affecting our results \cite[e.g.,][]{CMS07}.
	Therefore, for simplicity, we omit this term from the subsequent analysis and treat $\opnorm{\widehat{\ga}_J^{-1} - \ga_J^{-1}}_\infty$ as $o_\pr(n^{-1/2} J^3 (\log J)^2)^{1/2}$.
	Consequently, 
	we have that
	\begin{align*}
		n^{-1} \sum_{i=1}^n \eo^X[\|\bm{Z}_{iJ}\|_{\R^J}^4] 
		& = O_\pr \left( \opnorm{\widehat{\ga}_J^{-1}-\ga_J^{-1}}_\infty^2 + J^2 \right)
		\\& = O_\pr(n^{-1/2} J (\log J)^2 + 1)J^2)
		= O_\pr(J^2)
		, \\
		n^{-1} \sum_{i=1}^n \eo^X[\|\bm{Z}_{iJ}\|_{\R^J}^4]^2
		& = O_\pr \left( \opnorm{\widehat{\ga}_J^{-1}-\ga_J^{-1}}_\infty^2 + J^2 \right)^2
		\\& = \{O_\pr(n^{-1/2} J (\log J)^2 + 1)J^2\}^2
		= O_\pr(J^4),
	\end{align*}
	since $n^{-1/2} J (\log J)^2 = o(1)$ due to Condition~\ref{condBasicTrunc}.
	%	if $\opnorm{\widehat{\ga}_J^{-1}-\ga_J^{-1}}_\infty = o_\pr(1)$. 
	Then, using \citet[Theorem~5]{Bonis20}, 
	we find that
	\begin{align*}
		& \wsd^X(\sqrt{n}\bar{\bm{Z}}_J, \s \bm{G}_J)
		\\\leq& 
		%		{n^{1/2} \{O_\pr(J^2) + O_\pr(D_{2n})\}^{1/2} \over n}
		%		\\& + \left[ \{ O_\pr(J^2) + O_\pr(D_{2n}) \}^{1/2}
		%		\{O_\pr(J^2) + O_\pr(D_{2n}) + O_\pr(J^4) + O_\pr(D_{2n})\}^{1/2} \over n \right]^{1/2}
		{O_\pr(nJ^2)^{1/2} \over n} + \left[  {O_\pr(J^2)^{1/2} \{O_\pr(J^2) + O_\pr(J^4)\}^{1/2} \over n} \right]^{1/2}
		\\=& O_\pr(n^{-1/2}J) + O_\pr(n^{-1/2}J^{3/2})
		= O_\pr(n^{-1/2}J^{3/2}),
	\end{align*}
	where $\bm{G}_J \sim \nd(\bm{0},\bm{I}_J)$ is a $J$-dimensional Gaussian random vector. 
	This yields the desired result as
	\begin{align}
		\wsd^X( \sqrt{n/J}\widehat{\ga}_J^{-1/2} U_n, \s J^{-1/2} G_J )
		= J^{-1/2} \wsd^X(\sqrt{n}\bar{\bm{Z}}_J, \s \bm{G}_J)
		= O_\pr(n^{-1/2} J). \label{eq_thmGau_rate}
	\end{align}
	%	It follows because
	%	\begin{align*}
		%		\wsd^X(T_J, \s J^{-1/2}G_J)^2
		%		&\leq 2 \wsd^X( \sqrt{n/J}  \widehat{\ga}_J^{-1/2} U_n, \s J^{-1/2}G_J)^2 + 2\eo^X[\|B_n\|^2]
		%		\\& \leq 2 O_\pr(n^{-1}J^2) + 2\|B_n\|^2 = o_\pr(1), \\
		%		\wsd^X(T_J, T_J^*)^2
		%		&\leq 2 \wsd^X( \sqrt{n/J}  \widehat{\ga}_J^{-1/2} U_n, \sqrt{n/J}  \widehat{\ga}_J^{-1/2} U_n^*)^2 + 2\eo^X[\|B_n\|^2]
		%		\\& \leq 2 \wsd^X(Q,\widehat{Q})^2 + 2\|B_n\|^2 = o_\pr(1),
		%	\end{align*}
	%	by Propositions~\ref{propGauVar}--\ref{propBias}.
\end{proof}

\begin{proof}[Proof of \autoref{thmMain}(b)]
	The bootstrap approximation follows from the following two results, along with the representations in \eqref{eqDecompStat}--\eqref{eqDecompStatBTS}:
	\begin{align}		
		\wsd^X( \sqrt{n/J}  \widehat{\ga}_J^{-1/2} U_n, \sqrt{n/J}  \widehat{\ga}_J^{-1/2} U_n^*)
		& \leq \wsd^X(Q,\widehat{Q}), \label{eq_thmMainBTS1} \\		
		\wsd^X(Q,\widehat{Q})^2 
		& = o_\pr(1). \label{eq_thmMainBTS2}
	\end{align}
	Here, the inequality \eqref{eq_thmMainBTS1} holds deterministically,
	while the result in \eqref{eq_thmMainBTS2} requires the assumptions.
	In particular, the the inequality \eqref{eq_thmMainBTS1} is one of our key results. 
	It shows that the Wasserstein distance between the variance terms in \eqref{eqStat}–\eqref{eqStatBTS} is controlled by the Wasserstein distance between the bootstrap and true error distributions, $\widehat{Q}$ and $Q$. 
	This reduction is crucial for our analysis, 
	as it does not require stringent assumptions.
	The convergence in \eqref{eq_thmMainBTS2} says the consistency of the bootstrap error distribution $\widehat{Q}$ for the original error distribution $Q$.
	Even though a related result appears in \cite{YDN23RB},
	our version is substantially refined and provides a clearer convergence rate, which justifies presenting it here.

	To derive the inequality in \eqref{eq_thmMainBTS1},
	we follow the argument in the proof of \citet[Theorem~4.1]{KF92}.
	Since the infimum in the Wasserstein distance is attained \cite[Lemma~8.1]{Mallows72},
	there exist independently and identically distributed pairs of random variables $\{(\e_i', {\e_i^*}')\}_{i=1}^n$ such that
	\begin{enumerate}[(i)]
		\item $\e_i' \sim Q$, ${\e_i^*}' \sim \widehat{Q}$,
		
		\item $(\e_i', {\e_i^*}')$ is independent of $X_i$, and
		
		\item $\eo^X[(\e_i' - {\e_i^*}')^2] = \eo[(\e_i' - {\e_i^*}')^2] = \wsd(Q, \widehat{Q})^2$.
		
	\end{enumerate}
	We write 
	\begin{alignat*}{3}
		U_n'
		& \equiv n^{-1} \sum_{i=1}^n (X_i - \bar{X})(\e_i' - \bar{\e}'),  
		&& \qquad {U_n^*}' 
		&& \equiv n^{-1} \sum_{i=1}^n (X_i - \bar{X}) ({\e_i^*}' - {\bar{\e}^*}{'}),
	\end{alignat*}
	where $\bar{\e}' \equiv n^{-1} \sum_{i=1}^n \e_i'$ 
	and ${\bar{\e}^*}{'} \equiv n^{-1} \sum_{i=1}^n {\e_i^*}'$. 
	Using \autoref{lemAlge} and the properties in (i)--(iii),
	we have
	\begin{align*}
		n\wsd^X(\widehat{\ga}_J^{-1/2} U_n, \widehat{\ga}_J^{-1/2} U_n^*)^2
		& \leq n\eo^X[\|\widehat{\ga}_J^{-1/2}(U_n'-U_n^{*'})\|^2]
		\\& = n^{-1}\sum_{i=1}^n \|\widehat{\ga}_J^{-1/2} (X_i-\bar{X})\|^2 \eo^X[(\e_i' - {\e_i^*}')^2]
		\\& = n^{-1}\sum_{i=1}^n \|\widehat{\ga}_J^{-1/2} (X_i-\bar{X})\|^2 \wsd^X(Q,\widehat{Q})^2
		\\& = \opnormbig{n^{-1}\sum_{i=1}^n \{(\widehat{\ga}_J^{-1/2} (X_i-\bar{X})\}^{\otimes 2}}_1 \wsd^X(Q,\widehat{Q})^2
		\\& = \opnorm{\widehat{\Pi}_J}_1 \wsd^X(Q,\widehat{Q})^2
		\\& = J \wsd^X(Q,\widehat{Q})^2,
	\end{align*}
	where $\widehat{\Pi}_J \equiv \sum_{j=1}^J \hat{\phi}_j^{\otimes 2}$.

	To derive the result in \eqref{eq_thmMainBTS2}, let $R$ denote the empirical distribution of the errors $\{\e_i\}_{i=1}^n$. 
	First, \citet[Theorem~1]{Bonis20} indicates that $\wsd^X(Q, R) = \wsd(Q, R) \leq O(n^{-1/2})$.
	Recall that the errors and the residuals are represented as
	\begin{align*}
		\e_i
		& = Y_i - \eo[Y] - \langle \beta, X_i - \eo[X] \rangle, \\
		\hat{\e}_i 
		& = Y_i - \bar{Y} - \langle \hat{\beta}_J, X_i - \bar{X} \rangle,
	\end{align*}
	yielding their difference as
	\begin{align*}
		\e_i - \hat{\e}_i
		= \bar{Y}-\eo[Y] 
		+ \langle \hat{\beta}_J - \beta, X_i - \bar{X} \rangle 
		- \langle \beta, \bar{X} - \eo[X] \rangle .
	\end{align*}
	Then, we have
	\begin{align*}
		\wsd^X(R,\widehat{Q})^2
		& \leq n^{-1} \sum_{i=1}^n (\e_i - \hat{\e}_i)^2
		%		= n^{-1} \sum_{i=1}^n (\e_i - \hat{\e}_i)^2
		\\& \leq 3 (\bar{Y}-\eo[Y] )^2
		+ 3 (\langle \beta, \bar{X}-\eo[X] \rangle)^2
		+ 3 n^{-1} \sum_{i=1}^n \langle \hat{\beta}_J - \beta, X_i - \bar{X} \rangle^2 
		\\& \leq 3 \bar{\e}^2 + 3\|\beta\|^2 \|\bar{X}-\eo[X]\|^2 
		+ 3 \langle \widehat{\ga}(\hat{\beta}_J - \beta), \hat{\beta}_J - \beta \rangle
		\\& = O_\pr(n^{-1}) +  3 \| \widehat{\ga}^{1/2}(\hat{\beta}_J - \beta)\|^2
		%		\\& = O_\pr(n^{-1}) + O_\pr(n^{-1}) + O_\pr(J/n) + O_\pr(J\|B_n\|^2/n).
	\end{align*}
	where the last inequality are derived by the classical law of large numbers and CLT for Hilbertian elements \cite[e.g.,][Theorems~7.7.2 and~7.7.7]{HE15}.
	Using $\hat{\beta}_J-\beta = \widehat{\ga}_J^{-1}U_n + (\widehat{\Pi}_J-I)\beta$ and \autoref{lemQuadBound} below, 
	we have
	\begin{align*}
		\| \widehat{\ga}^{1/2}(\hat{\beta}_J - \beta)\|^2
		& \leq 2\|\widehat{\ga}_J^{-1/2}U_n\|^2 + 2\|\widehat{\ga}^{1/2} (\widehat{\Pi}_J-I)\beta \|^2
		\\& = O_\pr(J/n) + o_\pr(n^{-1/2}) + O \left( \sum_{j>J} \g_j \langle \beta, \phi_j \rangle^2 \right),
	\end{align*}
	since
	\begin{align*}
		\|\widehat{\ga}^{1/2} (\widehat{\Pi}_J-I)\beta \|^2
		=& \langle \widehat{\ga} (\widehat{\Pi}_J-I)\beta, (\widehat{\Pi}_J-I)\beta \rangle
		\\ \leq & |\langle (\widehat{\ga}-\ga) (\widehat{\Pi}_J-I)\beta, (\widehat{\Pi}_J-I)\beta \rangle|
		+ \|\ga^{1/2} (\widehat{\Pi}_J-I)\beta\|^2
		\\ \leq & \opnorm{\widehat{\ga}-\ga}_\infty (2\|(\widehat{\Pi}_J-\Pi_J)\beta\|^2 + 2\|(\Pi_J-I)\beta\|^2)
		\\& + 2 \|\ga^{1/2} (\widehat{\Pi}_J-\Pi_J)\beta\|^2
		+ 2 \|\ga^{1/2} (\Pi_J-I)\beta\|^2
		\\=& O_\pr(n^{-1/2}) O_\pr \left( \left\{ (n^{-1/2}J^2 \log J)^2 + \sum_{j>J} \langle \beta, \phi_j \rangle^2 \right\} \right)
		\\& + O_\pr(J/n) + O \left( \sum_{j>J} \g_j \langle \beta, \phi_j \rangle^2 \right),
	\end{align*}
	where Lemmas~\ref{lemPerturb1}--\ref{lemPerturb2}(a) are applied above. 
	Overall, the bootstrap error consistency holds 
	as
	\begin{align} \label{eq_thmBTSrate}
		\wsd^X(Q,\widehat{Q})^2 = O_\pr(n^{-1/2}) + O_\pr(J/n) + O \left( \sum_{j>J} \g_j \langle \beta, \phi_j \rangle^2 \right).
	\end{align}
\end{proof}

\begin{lem} \label{lemQuadBound}
	Suppose that $\s^2 = \eo[\e^2|X] \equiv \eo[\e^2]  \in (0,\infty)$.
	Then, we have 
	\begin{align*}
		\eo^X[\|\widehat{\ga}_J^{-1/2}U_n\|^2] = \s^2 J/n.
	\end{align*}
\end{lem}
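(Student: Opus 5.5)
Conditionally on $\xx_n$, the operator $\widehat{\ga}_J^{-1/2}$ and the centered regressors $X_i-\bar{X}$ are fixed, so the only randomness left in $\widehat{\ga}_J^{-1/2}U_n$ comes from the errors. The plan is to expand the squared norm, use independence and homoscedasticity of the errors to compute the cross moments, and reduce the result to a trace identity. That identity is the same one already used in the proof of \eqref{eq_thmMainBTS1}.

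First we rewrite $U_n$. Since $\sum_{i=1}^n (X_i-\bar{X}) = 0$, we have $U_n = n^{-1}\sum_{i=1}^n (X_i-\bar{X})\e_i$, so the $\bar{\e}$ term drops out. Writing $W_i \equiv \widehat{\ga}_J^{-1/2}(X_i-\bar{X})$, which is $\xx_n$-measurable, we get
\begin{align*}
\eo^X[\|\widehat{\ga}_J^{-1/2}U_n\|^2] = n^{-2}\sum_{i=1}^n\sum_{k=1}^n \langle W_i, W_k\rangle \eo^X[\e_i\e_k].
\end{align*}
The pairs $(X_i,\e_i)$ are independent across $i$, and $\eo[\e_i|X_i]=0$ (from the model with $\eo[\e^2|X]=\s^2$ and the usual exogeneity). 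Hence $\eo^X[\e_i\e_k] = \s^2\I(i=k)$, and the sum collapses to $n^{-2}\s^2\sum_{i=1}^n \|W_i\|^2$.

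Next we evaluate $n^{-1}\sum_{i=1}^n\|W_i\|^2$. It equals the nuclear norm, or trace, of $n^{-1}\sum_{i=1}^n W_i^{\otimes 2} = \widehat{\ga}_J^{-1/2}\widehat{\ga}\,\widehat{\ga}_J^{-1/2}$. In the basis $\{\hat{\phi}_j\}$ this operator is $\sum_{j=1}^J \hat{\g}_j^{-1/2}\hat{\g}_j\hat{\g}_j^{-1/2}\hat{\phi}_j^{\otimes 2} = \widehat{\Pi}_J$, whose trace is $J$. Substituting gives $\eo^X[\|\widehat{\ga}_J^{-1/2}U_n\|^2] = \s^2 J/n$.

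The only delicate step is making sure $\hat{\g}_J>0$, so that $\widehat{\ga}_J^{-1/2}$ is well defined. This holds with probability tending to one under \ref{condA2ev} and $J\le n-1$, and it is implicitly assumed wherever the FPCR estimator is defined. The conditional-moment computation also needs $\eo[\e_i|\xx_n]=0$ and $\eo[\e_i^2|\xx_n]=\s^2$, which follow from independence across $i$ and the model assumptions.
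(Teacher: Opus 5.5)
Your argument is correct and is the paper's computation with the order of summation over $i$ and $j$ interchanged: the paper evaluates $\eo^X[\langle U_n,\hat{\phi}_j\rangle^2]=\s^2\hat{\g}_j/n$ coordinate by coordinate, multiplies by $\hat{\g}_j^{-1}$ and sums over $j\le J$, whereas you first collapse the double sum over observations and then read off $J$ as the trace of $\widehat{\ga}_J^{-1/2}\widehat{\ga}\,\widehat{\ga}_J^{-1/2}=\widehat{\Pi}_J$, exactly as in the proof of \eqref{eq_thmMainBTS1}. One small quibble with your side remark: $\hat{\g}_J>0$ with probability tending to one does not follow from \ref{condA2ev} and $J\le n-1$ alone; it comes from $\pr(\aaa_J^c)\to0$ in \autoref{lemCMS07}(e) under Conditions~\ref{condMomentX}--\ref{condBasicTrunc}. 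This does not affect the identity, which is only asserted where $\widehat{\ga}_J^{-1/2}$ is defined.
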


\begin{proof}
	It follows that
	\begin{align*}
		\eo^X[\|\widehat{\ga}_J^{-1/2}U_n\|^2]
		= \sum_{j=1}^J \hat{\g}_j^{-1} \eo^X[\langle U_n, \hat{\phi}_j \rangle^2]
		= \s^2 J/n,
	\end{align*}
	since
	\begin{align*}
		\eo^X[\langle U_n, \hat{\phi}_j \rangle^2]
		& = \eo \left[ \left\langle n^{-1} \sum_{i=1}^n \e_i (X_i-\bar{X}), \hat{\phi}_j \right\rangle^2 \right]
		= \eo^X \left[ \left( n^{-1} \sum_{i=1}^n\langle  X_i-\bar{X}, \hat{\phi}_j \rangle  \e_i \right)^2 \right]
		\\& = n^{-2} \sum_{i=1}^n \langle X_i-\bar{X}, \hat{\phi}_j \rangle^2 \eo[\e^2]
		= \s^2 n^{-1} \langle \widehat{\ga} \hat{\phi}_j, \hat{\phi}_j \rangle
		= \s^2 n^{-1} \hat{\g}_j.
	\end{align*}
\end{proof}

\subsection{Proof of \autoref{thmTest}} \label{appTechTest}

%\begin{proof}[Proof of \autoref{thmTest}]
On one hand, when $\HH = L^2([0,1])$, the map $\Psi_{\mathrm{norm}}:\HH\to\R$ defined by $\Psi_{\mathrm{norm}}(x) \equiv \|x\| = \left\{ \int_0^1 x(u)^2 du \right\}^{1/2}$ for $x \in L^2([0,1])$ is Lipschitz continuous with constant 1.
On the other hand, when $\HH = \W^{1,2}([0,1])$, by \autoref{lemSobolevEmb},
the map $\Psi_{\mathrm{sup}}:\HH\to\R$ defined by $\Psi_{\mathrm{sup}}(x) \equiv \sup_{u \in [0,1]} |x(u)|$ for $x \in \HH$ is Lipschitz continuous on $\W^{1,2}([0,1])$ with constant $\sqrt{2}$.
\autoref{lemWSDcont} then yield
\begin{align*}
	\wsd^X(S_{l,J},\Psi_l(\s J^{-1/2}G_J)) 
	& \leq C \cdot \wsd^X(T_J, \s J^{-1/2}G_J), \\
	\wsd^X(S_{l,J}^*,\Psi_l(\s J^{-1/2}G_J))
	& \leq C \cdot \wsd^X(T_J^*, \s J^{-1/2}G_J),
\end{align*}
for $l =\mathrm{sq}$ when $\HH = L^2([0,1])$ and for $l=\mathrm{sup}$ when $\HH = \W^{1,2}([0,1])$,
where $C$ is a generic constant.
Finally, 
%	Propositions~\ref{propGauVar}--\ref{propBias} 
\autoref{thmMain} and \autoref{lemKDbddWD} along with Lemmas~\ref{lemSqDensity}--\ref{lemSupDensity} conclude the desired result.
For instance, 
when $l=\mathrm{sup}$ and $\HH = \W^{1,2}([0,1])$,
using \autoref{lemSupDensity} (and \autoref{lemKDbddWD}), 
we bound the Kolmogorov distance by the Wasserstein distance as
\begin{align*}
	& \sup_{s \in \R} |\pr(S_{\mathrm{sup},J} \leq s |\xx_n) - \pr(\Psi_{\mathrm{sup}}(\s J^{-1/2}G_J) \leq s )|^2
	\\ \leq & \sup_{s \in \R} 
	\left| \pr \left( \sup_{u \in [0,1]} J^{1/2}|T_J(u)| \leq s \Big| \xx_n \right) 
	- \pr \left( \sup_{u \in [0,1]} |\s G_J(u)| \leq s \right) \right|^2
	\\ \leq & C J^{1/4} \wsd^X(J^{1/2} T_J, \s G_J)
	= C J^{3/4} \wsd^X(T_J, \s J^{-1/2}G_J)
	\\=& O_\pr(n^{-1/2}J^{7/4}).
\end{align*}
The bootstrap counterpart can be derived by 	
replacing $J^{3/4}\wsd^X(T_J, \s J^{-1/2}G_J)$
with 
\begin{align*}
	J^{3/4}\wsd^X(T_J, \s J^{-1/2}G_J) + J^{3/4}\wsd^X(T_J, T_J^*)
	= O_\pr(n^{-1/2}J^{7/4}) + O \left( J^{3/4} \sum_{j>J} \g_j \langle \beta, \phi_j \rangle^2 \right).
\end{align*}
Similarly, the approximation results hold for the $L^2$ norm statistics using \autoref{lemSqDensity} when  $l =\mathrm{sq}$ and $\HH = L^2([0,1])$
as 
\begin{align*}
	& \sup_{s \in \R} |\pr(S_{\mathrm{sq},J} \leq s |\xx_n) - \pr(\s^2 J^{-1}\|G_J\|^2) \leq s )|^2
	\\=& \sup_{s \in \R} |\pr(S_{\mathrm{sq},J}^{1/2} \leq s |\xx_n) - \pr(\Psi_{\mathrm{norm}}(\s J^{-1/2}G_J) \leq s )|^2
	\\ \leq & \sup_{s \in \R} 
	\left| \pr \left( J^{1/2}\|T_J\| \leq s \Big| \xx_n \right) 
	- \pr \left( \s \|G_J\| \leq s \right) \right|^2
	\\ \leq & C J^{-1/2} \wsd^X(J^{1/2} T_J, \s G_J)
	= C \wsd^X(T_J, \s J^{-1/2}G_J)
	= O_\pr(n^{-1/2}J).
\end{align*}
Finally, we have the corresponding bootstrap result
by replacing $\wsd^X(T_J, \s J^{-1/2}G_J)$ with 
\begin{align*}
	\wsd^X(T_J, \s J^{-1/2}G_J) + \wsd^X(T_J, T_J^*)
	& = O_\pr(n^{-1/2}J) + O \left( \sum_{j>J} \g_j \langle \beta, \phi_j \rangle^2 \right).
\end{align*}
%	Similarly, we can (i) derive the bound for the bootstrap counterpart for $l=\mathrm{sup}$ when $\HH = \W^{1,2}([0,1])$ and (ii) prove the bootstrap approximation result for the $L^2$ norm statistics using \autoref{lemSqDensity} when  $l =\mathrm{sq}$ and $\HH = L^2([0,1])$.
This completes the proof. 
%\end{proof}

\subsection{Proof of \autoref{thmProj}} \label{appTechProj}

%\begin{proof}[Proof of \autoref{thmProj}]
We define $y \in \mathrm{span}(\{\hat{\phi}_j\}_{j=1}^J)$ such that $\langle y, \hat{\phi}_j \rangle = \hat{\g}_j^{-1/2} \langle v, \hat{\phi}_j \rangle$, implying $\widehat{\ga}_J^{1/2}y = \widehat{\Pi}_J v$.
Then, using  $\hat{\beta}_J-\beta = \widehat{\ga}_J^{-1}U_n + (\widehat{\Pi}_J-I)\beta$,
we see that
\begin{align}
	\langle \hat{\beta}_J-\beta, v \rangle
	& = \langle \hat{\beta}_J-\beta, \widehat{\Pi}_Jv \rangle
	+ \langle \hat{\beta}_J-\beta, (I-\widehat{\Pi}_J)v \rangle  \nonumber
	\\& = \langle \widehat{\ga}_J^{-1}U_n, \widehat{\Pi}_Jv \rangle
	- \langle (\widehat{\Pi}_J-I)\beta, (\widehat{\Pi}_J-I)v \rangle  \nonumber
	\\& = \langle \widehat{\ga}_J^{-1/2}U_n, y \rangle
	- \sqrt{J/n}B_n, \label{eq_thmProjDecomp}\\
	\langle \hat{\beta}_J^*-\hat{\beta}_J, v \rangle
	& = \langle \widehat{\ga}_J^{-1/2}U_n^*, y \rangle, \nonumber
\end{align}
where
\begin{align}
	B_n \equiv \sqrt{n/J}\langle (\widehat{\Pi}_J-I)\beta, (\widehat{\Pi}_J-I)v \rangle. \label{eq_thmProjBias}
\end{align}
We will later show that the bias term $B_n$ in \eqref{eq_thmProjBias} is negligible using perturbation theory given in \autoref{appTechBias}.

We first derive the Gaussian approximation result.
Note that $\langle G_J, y \rangle \sim \nd(0,\hat{\tau}_J(v))$, because $\var[\langle G_J, y \rangle|\xx_n] = \sum_{j=1}^J \langle y, \hat{\phi}_j\rangle^2 = \hat{\tau}_J(v)$.
Then, by \autoref{lemKDbddWD}, we have
\begin{align*}
	& \sup_{s \in \R}|\pr(T_{\mathrm{proj},J}(v) \leq s|\xx_n) - \Phi(s/\s)|^2
	\\=& \sup_{s \in \R}|\pr(T_{\mathrm{proj},J}(v) \leq s|\xx_n) - \pr (\s \langle G_J,y \rangle / \sqrt{\hat{\tau}_J(v)} \leq s|\xx_n) |^2
	\\ \leq & {1 \over \sqrt{2\pi}} \wsd^X(T_{\mathrm{proj},J}(v), \s \langle G_J,y \rangle / \sqrt{\hat{\tau}_J(v)})
	\\=& \sqrt{J \over \hat{\tau}_J(v)} \wsd^X \left( \sqrt{n/J} \langle \hat{\beta}_J-\beta, v \rangle, \s J^{-1/2} \langle G_J, y \rangle \right).
\end{align*}
Using the decomposition in \eqref{eq_thmProjDecomp}, the Wasserstein distance in the preceding display is bounded as
\begin{align*}
	\wsd^X \left( \sqrt{n/J} \langle \hat{\beta}_J-\beta, v \rangle, \s J^{-1/2} \langle G_J, y \rangle \right)^2
	\leq 2 \wsd^X \left( \sqrt{n/J} \langle \widehat{\ga}_J^{-1/2}U_n, y \rangle, \s J^{-1/2} \langle G_J, y \rangle \right)
	+ 2 B_n^2.
\end{align*}
Since the map $\HH \ni x \mapsto \langle x, y \rangle$ is Lipschitz continuous with constant $\|y\| = \|\widehat{\ga}_J^{-1/2}v\| = \hat{\tau}_J(v)$,
the Wasserstein distance in the upper bound is further bounded by using \autoref{lemWSDcont} as
\begin{align*}
	\wsd^X \left( \sqrt{n/J} \langle \widehat{\ga}_J^{-1/2}U_n, y \rangle, \s J^{-1/2} \langle G_J, y \rangle \right)
	& \leq \hat{\tau}_J(v) \wsd^X(\sqrt{n/J} \widehat{\ga}_J^{-1/2}U_n, \s J^{-1/2} G_J)
	\\& = \hat{\tau}_J(v) O_\pr(n^{-1/2}J),
\end{align*}
where the last rate comes from the proof of \autoref{thmMain} (cf.~Equation~\eqref{eq_thmGau_rate}). 
To sum up, the Kolmogorov distance is bounded as
\begin{align}
	& \sup_{s \in \R}|\pr(T_{\mathrm{proj},J}(v) \leq s|\xx_n) - \Phi(s/\s)|^2 \nonumber
	\\ \leq & C \left( J^{1/2} \wsd^X(\sqrt{n/J} \widehat{\ga}_J^{-1/2}U_n, \s J^{-1/2} G_J) + {J \over \hat{\tau}_J(v)} B_n^2 \right)^{1/4} \label{eq_thmProjRate}
	\\=& C \{J^{1/2} O_\pr(n^{-1/2}J) + O_\pr(1) o_\pr(1)\}^{1/4} 
	= o_\pr(1). \nonumber
\end{align}
Here, we were able to apply ${J / \hat{\tau}_J(v)} = O_\pr(1)$,
due to the fact that
\begin{align*}
	\left| {\hat{\tau}_J(v) \over \tau_J(v)} - 1 \right|
	& = {1 \over J} {J \over \tau_J(v)} \langle (\widehat{\ga}_J^{-1}-\ga_J^{-1})v, v \rangle
	= O_\pr(n^{-1/2} J^3 (\log J)^2)^{1/2} J^{-1} O(1)
	\\& = O_\pr(n^{-1/2} J (\log J)^2)^{1/2}
\end{align*}
derived from \autoref{lemPerturb2}(b).
The bootstrap approximation can be derived in an analogous manner,
by replacing the term $\wsd^X(\sqrt{n/J} \widehat{\ga}_J^{-1/2}U_n, \s J^{-1/2} G_J)$ in \eqref{eq_thmProjRate}
with 
\begin{align*}
	& \wsd^X(\sqrt{n/J} \widehat{\ga}_J^{-1/2}U_n, \s J^{-1/2} G_J)
	+ \wsd^X(\sqrt{n/J} \widehat{\ga}_J^{-1/2}U_n^*, \sqrt{n/J} \widehat{\ga}_J^{-1/2}U_n)
	\\=& O_\pr(n^{-1/2}J) + O\left( \sum_{j>J} \g_j \langle \beta, \phi_j \rangle^2 \right),
\end{align*}
where the convergence rate is obtained by combining the rates in Equations~\eqref{eq_thmBTSrate} and~\eqref{eq_thmGau_rate}.
Since $J^{1/2} \sum_{j>J} \g_j \langle \beta, \phi_j \rangle^2 \leq C J^{1/2} \sum_{j>J} \langle \beta, \phi_j \rangle^2 = o(1)$ due to Condition~\ref{condProjSmooth}, 
the bootstrap result follows.

The proof will be complete if we show that the second term in \eqref{eq_thmProjBias} is negligible,
which can be proved by the following decomposition:
\begin{align}
	B_n
	= & \sqrt{n/J} \langle (\widehat{\Pi}_J-I)\beta, (\widehat{\Pi}_J-I)v \rangle \nonumber
	\\=& \sqrt{n/J} \langle (\widehat{\Pi}_J-\Pi_J)\beta, (\widehat{\Pi}_J-\Pi_J)v \rangle \label{eq_thmProj_bias1}
	\\& + \sqrt{n/J} \langle (\widehat{\Pi}_J-\Pi_J)\beta, (\Pi_J-I)v \rangle \label{eq_thmProj_bias2}
	\\& + \sqrt{n/J} \langle (\widehat{\Pi}_J-\Pi_J)v, (\Pi_J-I)\beta \rangle  \label{eq_thmProj_bias3}
	\\& + \sqrt{n/J} \langle (I-\Pi_J)\beta, (I-\Pi_J)v \rangle. \label{eq_thmProj_bias4}
\end{align}
The firs term \eqref{eq_thmProj_bias1} is bounded, by using \autoref{lemPerturb2}(a), as
\begin{align*}
	\sqrt{n/J} \langle (\widehat{\Pi}_J-\Pi_J)\beta, (\widehat{\Pi}_J-\Pi_J)v \rangle 
	& = O_\pr \left( \sqrt{n/J} (n^{-1/2} J^2 \log J)^2 \right)
	\\& = O_\pr \left( n^{-1/2} J^{7/2} (\log J)^2 \right),
\end{align*}
which is negligible by Condition~\ref{condProjRate}.
The second term \eqref{eq_thmProj_bias2} is bounded by
\begin{align*}
	& |\sqrt{n/J} \langle (\widehat{\Pi}_J-\Pi_J)\beta, (I-\Pi_J)v \rangle |
	\\ \leq & \sqrt{n/J} (n^{-1/2} J^2 \log J) \left( \sum_{j>J} \langle v, \phi_j \rangle^2 \right)^{1/2}
	\\=& \left( J^3 (\log J)^2 \sum_{j>J} \langle v, \phi_j \rangle^2 \right)^{1/2},
\end{align*}
using \autoref{lemPerturb2}(a).
Again using \autoref{lemPerturb2}(a),
the term in \eqref{eq_thmProj_bias3} is similarly bounded as
\begin{align*}
	|\sqrt{n/J} \langle (\widehat{\Pi}_J-\Pi_J)v, (I-\Pi_J)\beta \rangle| 
	= \left( J^3 (\log J)^2 \sum_{j>J} \langle \beta, \phi_j \rangle^2 \right)^{1/2}.
\end{align*}
%	
%	\tred{}
%	Since Condition~\ref{condProjRate} implies that $J^3 (\log J)^2 \leq C J^{3+\dt/2} \asymp \sqrt{n/J}$, 
%	using Condition~\ref{condProjSmooth}, 
%	we have that 
%	\begin{align*}
	%		& |\sqrt{n/J} \langle (I-\Pi_J)\beta, (I-\Pi_J)v \rangle|
	%		\\=& \left| \sqrt{n/J} \sum_{j>J} \langle \beta, \phi_j \rangle \langle v, \phi_j \rangle \right|
	%		\\ \asymp & \left( \sqrt{n \over J}\sum_{j>J} \langle \beta, \phi_j \rangle^2 \right)^{1/2}
	%		\left( \sqrt{n \over J}\sum_{j>J} \langle v, \phi_j \rangle^2 \right)^{1/2}.
	%	\end{align*}
%	
The fourth term \eqref{eq_thmProj_bias3} is bounded as
\begin{align*}
	& |\sqrt{n/J} \langle (I-\Pi_J)\beta, (I-\Pi_J)v \rangle|
	\\=& \left| \sqrt{n/J} \sum_{j>J} \langle \beta, \phi_j \rangle \langle v, \phi_j \rangle \right|
	\\ \leq & \left( \sqrt{n \over J}\sum_{j>J} \langle \beta, \phi_j \rangle^2 \right)^{1/2}
	\left( \sqrt{n \over J}\sum_{j>J} \langle v, \phi_j \rangle^2 \right)^{1/2}.
\end{align*}
Condition~\ref{condProjRate} implies that $J^3 (\log J)^2 \leq C J^{3+\dt/2} \asymp \sqrt{n/J}$, 
and using Condition~\ref{condProjSmooth}, 
we have that 
\begin{align*}
	\max \left\{ \sqrt{n/J}, J^3 (\log J)^2 \right\} \sum_{j>J} \langle \beta, \phi_j \rangle^2
	= J^{3+\dt/2}
	\sum_{j>J} \langle \beta, \phi_j \rangle^2 
	= o(1).
\end{align*}
The same argument can apply to $v$ instead of $\beta$,
which concludes the proof.
%	\tred{change Condition~\ref{condProjSmooth} with a multiplicative form}
%\end{proof}

\section{Functional calculus} \label{appTechBias}

Throughout this section, let $C$ denote a generic positive constant that can be different depending on cases.
%Also, for sequences $\{a_n\}$ and $\{b_n\}$ of positive real numbers, 
%$a_n \asymp b_n$ means that $a_n/b_n$ is asymptotically bounded away from both zero and infinity as $n\to\infty$. 
%Write $\Pi_J \equiv \sum_{j=1}^J \phi_j^{\otimes 2}$ and $\widehat{\Pi}_J \equiv \sum_{j=1}^J \hat{\phi}_j^{\otimes 2}$ for the projections operators onto the linear space spanned by $\{\phi_j\}_{j=1}^J$ and $\{\hat{\phi}_j\}_{j=1}^J$, respectively,
%and $\ga_J^{1/2} \equiv \sum_{j=1}^J \g_j^{1/2} \phi_j^{\otimes 2}$. 
%The bias term $B_n \equiv T_J - \sqrt{n/J} \widehat{\ga}_J^{-1/2} U_n$ in \eqref{eqDecompStat} is decomposed as
%\begin{align}
%	B_n
%	& \equiv \sqrt{n/J}  (\widehat{\ga}_J^{1/2} - \ga_J^{1/2}) (\widehat{\Pi}_J - \Pi_J) \beta \label{eqBias1}
%	\\& \hspace{12pt} + \sqrt{n/J}  \ga_J^{1/2} (\widehat{\Pi}_J - \Pi_J) \beta \label{eqBias2}
%	\\& \hspace{12pt} + \sqrt{n/J}  (\widehat{\ga}_J^{1/2}-\ga_J^{1/2}) (\Pi_J \beta - \beta)  \label{eqBias3}
%	\\& \hspace{12pt}  +  \sqrt{n/J}  \ga_J^{1/2} (\Pi_J \beta - \beta) \label{eqBias4}
%\end{align}
%To handle the terms in \eqref{eqBias1}–\eqref{eqBias4}, 
We employ standard techniques in functional calculus, following the approaches developed in \cite{CMS07} and \cite{YDN23RB}. 
Although our arguments draw on ideas from these works, 
the results we obtain do not follow directly from theirs.
Nevertheless, several technical steps resemble arguments already appearing in the literature;
therefore, we omit these repeated details and present only the points where our analysis departs from the existing literature.

Let $\bb_j \equiv \{ z \in \C:|z-\g_j| = \dt_j/2 \}$ be the oriented circle in the complex plane $\C$.
%and set $\cc \equiv \bigcup_{j=1}^J \bb_j$ to be their union.
with its sample counterpart $\widehat{\bb}_j \equiv \{ z \in \C:|z-\hat{\g}_j| = \hat{\dt}_j/2 \}$,
where $\hat{\dt}_1 \equiv \hat{\g}_1-\hat{\g}_2$ and $\hat{\dt}_j \equiv \min\{ \hat{\g}_j-\hat{\g}_{j-1}, \hat{\g}_{j+1}-\hat{\g}_j \}$ for $j \geq 2$. 
%and $\widehat{\cc}_n \equiv \bigcup_{j=1}^J \widehat{\bb}_j$. 
Then, using the resolvents $(zI - \ga)^{-1}$ and $(zI - \widehat{\ga})^{-1}$ of $\ga$ and $\widehat{\ga}$,  
we have contour integral expression for operators
as
\begin{alignat}{3}
	\Pi_J 
	& = {1 \over 2\pi \iota} \sum_{j=1}^J \int_{\bb_j} (zI-\ga)^{-1} dz,
	&& \qquad \widehat{\Pi}_J 
	&& = {1 \over 2\pi \iota} \sum_{j=1}^J \int_{\widehat{\bb}_j} (zI-\widehat{\ga})^{-1} dz, \\
	\ga_J^{1/2} 
	& = {1 \over 2\pi \iota} \sum_{j=1}^J \int_{\bb_j} z^{1/2} (zI-\ga)^{-1} dz,
	&& \qquad \widehat{\ga}_J ^{1/2}
	&& = {1 \over 2\pi \iota} \sum_{j=1}^J \int_{\widehat{\bb}_j}  z^{1/2} (zI - \widehat{\ga})^{-1} dz,
	%	\ga_J^{-1} 
	%	& = {1 \over 2\pi \iota} \sum_{j=1}^J \int_{\bb_j} z^{-1} (zI-\ga)^{-1} dz,
	%	&& \qquad \widehat{\ga}_J^{-1}
	%	&& = {1 \over 2\pi \iota} \sum_{j=1}^J \int_{\widehat{\bb}_j} z^{-1} (zI-\widehat{\ga})^{-1} dz,
\end{alignat}
where $\iota^2 = -1$ \citep[Chapter~VII]{DS88}.
We also introduce other notation related to perturbation theory for later development:
\begin{align*}
	G(z) 
	& \equiv (zI - \ga)^{-1/2} (\widehat{\ga}-\ga)(zI - \ga)^{-1/2}, \\
	K(z)
	& \equiv (zI - \ga)^{1/2} (zI - \widehat{\ga})^{-1} (zI - \ga)^{1/2}, \\
	\ee_j 
	& \equiv ( \opnorm{G(z)}_\infty < 1/2, \forall z \in \bb_j), \\
	\aaa_J,
	& \equiv ( \forall j \in \{1, \dots, J\}, |\hat{\g}_j-\hat{\g}_j| < \dt_j/2 ). 
\end{align*}

We will frequently use the following lemmas, the results that come from \cite{CMS07} and \cite{YDN23RB}, without referring to them each time.
\begin{lem} \label{lemCMS07}
	Suppose that  Conditions~\ref{condMomentX}--\ref{condConvexEV} hold.
	Then, we have the following:
	\begin{enumerate}[(a)]
		
		\item 
		$\sup_{z \in \bb_j} \sum_{l=1}^\infty {\g_l / |z-\g_l|} \leq C j \log j$;

		\item 
		$\eo \left[ \sup_{z \in \bb_j} \opnorm{G(z)}_\infty ^2 \right] \leq C n^{-1} (j \log j)^2$;
		
		\item 
		$\sup_{z \in \bb_j} \opnorm{K(z)}_\infty  \I_{\ee_j} \leq C$ almost surely;
		
		\item 
		$\pr(\ee_j^c) \leq C n^{-1/2} j\log j$; and
		
		\item 
		$\pr(\aaa_J^c) \leq C \left( n^{-1} \sum_{j=1}^J \dt_j^{-2} + n^{-1/2} \sum_{j=1}^J j \log j \right)$.
		
	\end{enumerate}
	In this case, parts~(d)--(e) imply that, for any random quantities $\{q_j\}_{j=1}^J$ and $r_n$ and for each $\eta>0$, 
	\begin{align*}
		\pr \left( \sum_{j=1}^J q_j \I_{\ee_j^c} >\eta \right)
		&  \leq C n^{-1/2} \sum_{j=1}^J j \log j, \\
		\pr(r_n\I_{\aaa_J^c}>\eta)
		& \leq C \left( n^{-1} \sum_{j=1}^J \dt_j^{-2} + n^{-1/2} \sum_{j=1}^J j \log j \right).
	\end{align*}
\end{lem}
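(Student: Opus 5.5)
The statement collects standard perturbation facts, so the plan is to prove each part directly from the resolvent structure of $\ga$ under Conditions~\ref{condMomentX}--\ref{condConvexEV}, following \cite{CMS07} and \cite{YDN23RB}.

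\emph{Part~(a).} Fix $z \in \bb_j$, so $|z-\g_j| = \dt_j/2$. For $l \neq j$, distinctness of the eigenvalues gives $|z-\g_l| \geq |\g_l-\g_j|/2$, and the $l=j$ term equals $2\g_j/\dt_j$. Convexity of $j\mapsto\g_j$ (Condition~\ref{condConvexEV}) gives $\g_j-\g_{j+1}\geq \g_j/j$ roughly, and more generally $|\g_l-\g_j| \gtrsim \g_j|l-j|/j$ for $l>j$ and $|\g_l-\g_j| \gtrsim \g_l|l-j|/l$ for $l<j$. Summing, the tail $l>j$ contributes at most $C\sum_{l>j} \g_l j /(\g_j (l-j))$; splitting at $l=2j$ and using $\g_l\le\g_j$ bounds this by $Cj\log j$. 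The range $l<j$ contributes at most $C\sum_{l<j} l/(j-l) \le Cj\log j$. The diagonal term is $O(j)$ by the same convexity bound.

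\emph{Part~(b).} Bound $\opnorm{G(z)}_\infty \le \opnorm{G(z)}_2$ and expand $\opnorm{G(z)}_2^2=\sum_{l,m}\langle(\widehat{\ga}-\ga)\phi_l,\phi_m\rangle^2/(|z-\g_l||z-\g_m|)$. Take expectations. Condition~\ref{condMomentX} with Cauchy--Schwarz gives $\eo\langle(\widehat{\ga}-\ga)\phi_l,\phi_m\rangle^2\le C n^{-1}\g_l\g_m$, the centering by $\bar X$ adding only lower-order terms. The expectation is then at most $Cn^{-1}(\sum_l \g_l/|z-\g_l|)^2$, and part~(a) finishes. To move the supremum inside the expectation, replace $|z-\g_l|$ by a uniform lower bound over $z\in\bb_j$ that is comparable to it for all $l$; the part~(a) bound is already uniform in $z$. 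I expect this uniformity step to be the main technical point.

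\emph{Parts~(c)--(e).} On $\ee_j$, write
\begin{align*}
	K(z)=(I-G(z))^{-1}.
\end{align*}
A Neumann series then gives $\opnorm{K(z)}_\infty\le 2$, which is part~(c). Part~(d) follows from Markov's inequality on $\sup_z\opnorm{G(z)}_\infty$ together with part~(b). For part~(e), Weyl's inequality $|\hat\g_j-\g_j|\le\opnorm{\widehat{\ga}-\ga}_\infty$ and Chebyshev's inequality give $\pr(|\hat\g_j-\g_j|\ge\dt_j/2)\le Cn^{-1}\dt_j^{-2}$. A union bound over $j\le J$ then gives part~(e). If one wants a contour-based proof instead, one can add $\sum_j\pr(\ee_j^c)$.

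The displayed consequences follow because $\sum_j q_j\I_{\ee_j^c}>\eta$ forces some $\ee_j^c$ to occur, so a union bound with part~(d) applies. Similarly, $r_n\I_{\aaa_J^c}>\eta$ forces $\aaa_J^c$, and part~(e) applies.
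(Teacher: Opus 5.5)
Parts (b)--(e) of your proposal follow the standard route of \cite{CMS07}; the paper itself gives no proof and simply defers to \cite{CMS07} and \cite{YDN23RB}. These steps are sound: you bound $\sup_{z\in\bb_j}$ through the $z$-free weights $|\g_l-\g_j|/2$ (and $\dt_j/2$ for $l=j$), use $K(z)=(I-G(z))^{-1}$ with a Neumann series on $\ee_j$, apply Markov's inequality for (d), and use union bounds for the displayed consequences. Your Weyl--Chebyshev argument for (e) is cleaner than a contour argument and even gives the sharper bound $Cn^{-1}\sum_{j\le J}\dt_j^{-2}$.

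The genuine gap is part (a), on which (b), (d) and the first displayed consequence all rest.

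\begin{enumerate}[(i)]
\item Your gap inequalities are misstated. The inequality $\g_j-\g_l\gtrsim \g_j(l-j)/j$ cannot hold for $l$ large relative to $j$: the left side is below $\g_j$, while the right side exceeds any fixed multiple of $\g_j$. For $l<j$, the natural form has $j$, not $l$, in the denominator.
\item More decisively, the range $l>2j$ is not controlled. For $z\in\bb_j$ and $l>2j$ one has $|z-\g_l|\le \dt_j/2+\g_j<\tfrac32\g_j$, so this range contributes at least $\tfrac23\g_j^{-1}\sum_{l>2j}\g_l$. Your estimate via $\g_l\le\g_j$ instead produces $\sum_{l>2j} j/(l-j)=\infty$. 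What you need is a tail bound $\sum_{l>2j}\g_l\le Cj\log j\,\g_j$, and convexity does not supply it. For example, $\g_l=1/\{l\log l\,(\log\log l)^2\}$ is convex for large $l$, summable, and even satisfies Condition~\ref{condDecayEV}, yet $\g_j^{-1}\sum_{l>2j}\g_l\asymp j\log j\,\log\log j$.
\item Your diagonal claim ``$\g_j-\g_{j+1}\ge \g_j/j$ roughly'' also does not follow from convexity alone. Monotonicity of the gaps only yields $\g_j-\g_{j+1}\ge(\g_j-\g_{2j})/j$. This is of order $\g_j/j$ only if $\g_{2j}\le c\,\g_j$ for some $c<1$.
\end{enumerate}

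Consequently, part (a) cannot be closed from Conditions~\ref{condMomentX}--\ref{condConvexEV} alone; the statement of (a) itself needs a stronger eigenvalue assumption. You must import a regular-decay condition of the kind the cited source effectively relies on, for instance $\g_j\asymp j^{-a}$ with $a>1$ together with consecutive gaps $\g_m-\g_{m+1}\gtrsim m^{-a-1}$. Under such a condition, summing consecutive gaps handles every range:
\begin{enumerate}[(i)]
\item for $l\le j/2$, $\g_l-\g_j\gtrsim \g_l$;
\item for $j/2<l\le 2j$ with $l\ne j$, $|\g_l-\g_j|\gtrsim j^{-a-1}|l-j|$, giving $O(j\log j)$;
\item for $l>2j$, $\g_j-\g_l\gtrsim\g_j$ and $\g_j^{-1}\sum_{l>2j}\g_l\lesssim j$;
\item the diagonal term $\g_j/\dt_j$ is $O(j)$.
\end{enumerate}
These estimates are stated for the $z$-free weights, so they transfer directly to the supremum in (b), and the rest of your plan then goes through.
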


\begin{lem} \label{lemPerturb1}
	Suppose that Conditions~\ref{condMomentX}--\ref{condBasicTrunc} hold. 
	Then, we have
	\begin{align*}
		\|\sqrt{n/J} \ga^{1/2} (\widehat{\Pi}_J - \Pi_J) \beta\|^2
		= D_{1n}  + D_{2n}
	\end{align*}
	where $\eo[D_{1n}] = o(1) + O(n^{-1/2} J^{5/2} (\log J)^2)$ and $r_nD_{2n} = o_\pr(1)$ for any random quantity $r_n$. 
\end{lem}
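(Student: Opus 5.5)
Using the resolvent representations, I will write
\begin{align*}
	\widehat{\Pi}_J - \Pi_J = \frac{1}{2\pi\iota}\sum_{j=1}^J \int_{\bb_j} \{(zI-\widehat{\ga})^{-1} - (zI-\ga)^{-1}\}\,dz
\end{align*}
on the event $\aaa_J$, where the sample contours $\widehat{\bb}_j$ may be replaced by the population contours $\bb_j$. The complementary event $\aaa_J^c$, together with each $\ee_j^c$, goes into $D_{2n}$. By \autoref{lemCMS07}(d)--(e) and Condition~\ref{condBasicTrunc}, $\pr(\aaa_J^c)\to0$ and $\pr(\cup_{j\le J}\ee_j^c)\le Cn^{-1/2}J^2\log J\to0$. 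Hence anything multiplied by these indicators satisfies $r_nD_{2n}=o_\pr(1)$ for any $r_n$.

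The resolvent identity gives
\begin{align*}
	(zI-\widehat{\ga})^{-1}-(zI-\ga)^{-1} = (zI-\ga)^{-1/2}\,G(z)\,K(z)\,(zI-\ga)^{-1/2},
\end{align*}
or its second-order expansion $(zI-\ga)^{-1/2}\{G(z)+G(z)K(z)G(z)\}(zI-\ga)^{-1/2}$. I split into a linear term $L_n$ and a remainder $R_n$. For each $j$,
\begin{align*}
	\ga^{1/2}\int_{\bb_j}(zI-\ga)^{-1/2}G(z)(zI-\ga)^{-1/2}\beta\,dz
\end{align*}
is computed explicitly in the eigenbasis. Its $(l,k)$ coefficient is $\g_l^{1/2}\langle(\widehat{\ga}-\ga)\phi_k,\phi_l\rangle\beta_k$, multiplied by contour integrals of $(z-\g_l)^{-1}(z-\g_k)^{-1}$. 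These produce the familiar factors $(\g_j-\g_k)^{-1}$ when exactly one of $l,k$ equals $j$, and zero otherwise.

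Summing over $j\le J$, the linear term equals
\begin{align*}
	\sum_{l\le J<k}\frac{\g_l^{1/2}\langle(\widehat{\ga}-\ga)\phi_k,\phi_l\rangle\beta_k}{\g_l-\g_k}\phi_l + (\text{symmetric part with } l>J\ge k).
\end{align*}
The terms with both indices $\le J$ cancel because $\Pi_J$ commutes. I then take the expectation of $n\|L_n\|^2/J$. Under Condition~\ref{condMomentX}, $n\eo\langle(\widehat{\ga}-\ga)\phi_k,\phi_l\rangle^2\le C\g_k\g_l$, so the contribution is $J^{-1}\sum_{l\le J<k}\g_l^2\g_k\beta_k^2(\g_l-\g_k)^{-2}$, plus the symmetric piece. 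Cross terms need the fourth-moment bound together with Cauchy--Schwarz across $k$. By convexity (Condition~\ref{condConvexEV}), $\g_l-\g_k\ge(1-k/l\ldots)$-type bounds give $\g_l/(\g_l-\g_k)\le Ck/(k-l)$ or $\le CJ$. Since $\sum_k\beta_k^2<\infty$ and $\g_k\to0$, the first piece is $o(1)$ after the $J^{-1}$ normalisation; this is the source of the $o(1)$ in $\eo[D_{1n}]$.

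The remainder is bounded using \autoref{lemCMS07}(a)--(c). On $\ee_j$ we have $\opnorm{K}_\infty\le C$, and $\sup_z\opnorm{G(z)}^2_\infty$ has expectation $Cn^{-1}(j\log j)^2$. The factor $\ga^{1/2}(zI-\ga)^{-1/2}$ is bounded by $C(j\log j)^{1/2}$-type quantities via part~(a), and the contour length is $\dt_j$. This gives per-$j$ norms of order $n^{-1}j^{3/2}(\log j)^2\|\beta\|$, up to the contour factor. Summing over $j\le J$ and applying Cauchy--Schwarz gives $n\|R_n\|^2/J=O(n^{-1/2}J^{5/2}(\log J)^2)$ in expectation.

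The main obstacle is this remainder bookkeeping: obtaining exactly the exponent $5/2$ requires using the $\ga^{1/2}$ prefactor to absorb one factor $|z-\g_l|^{-1/2}$ and not wasting a factor $J$ when summing over $j$. I would first try to bound $\sum_j\opnorm{\cdot}$ directly using Condition~\ref{condDecayEV} ($\g_j\lesssim(j\log j)^{-1}$), and fall back to cruder bounds if needed, since Condition~\ref{condBasicTrunc} leaves room.
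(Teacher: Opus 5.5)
Your proposal follows essentially the same proof as the paper. Like the paper (following \citet{CMS07}), you split $\widehat{\Pi}_J-\Pi_J$ on the population contours into three parts: a first-order term with $o(1)$ second moment, a second-order resolvent remainder controlled on $\ee_j$ through \autoref{lemCMS07}(a)--(c), and the events $\aaa_J^c$, $\ee_j^c$ absorbed into $D_{2n}$. You write out the first-order term in the eigenbasis, where the paper cites \citet{CMS07} and \citet{Yeon26} to drop $\sum_j|\langle\beta,\phi_j\rangle|<\infty$; your symmetric remainder $GKG$ works as well as the paper's $G^2K$.

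For the bookkeeping you flagged, note that, exactly as in the paper, this argument yields a first-moment bound on the unsquared norm, not a bound on $\eo[n\|R_n\|^2/J]$. The expected contribution of $\bb_j$ on $\ee_j$ is at most $C\,\dt_j\,(j\log j)^{1/2}\,n^{-1}(j\log j)^2\,\dt_j^{-1/2}\|\beta\|\le Cn^{-1}(j\log j)^2$, since $\dt_j\le\g_j\le C(j\log j)^{-1}$ by Condition~\ref{condDecayEV}; so the per-$j$ order is $j^2$, not $j^{3/2}$. Summing over $j\le J$ and multiplying by $\sqrt{n/J}$ then gives $n^{-1/2}J^{5/2}(\log J)^2$ directly, with no Cauchy--Schwarz step.
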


\begin{proof}
	Following the proof of \citet[Proposition~2]{CMS07}, 
	$\widehat{\Pi}_J - \Pi_J$ is decomposed as
	\begin{align*}
		\widehat{\Pi}_J - \Pi_J
		\equiv \sss_J + \rr_J + r_n\I_{\aaa_J^c},
	\end{align*}
	where
	\begin{align*}
		\sss_J 
		& \equiv {1 \over 2\pi\iota} \sum_{j=1}^J \int_{\bb_j} (zI - \ga^{-1}) (\widehat{\ga}-\ga) (zI - \ga)^{-1} dz, \\
		\rr_J
		& \equiv {1 \over 2\pi\iota} \sum_{j=1}^J \int_{\bb_j} (zI - \ga)^{-1} (\widehat{\ga}-\ga) (zI - \ga)^{-1} (\widehat{\ga}-\ga) (zI - \widehat{\ga})^{-1} dz,
	\end{align*}
	and $r_n$ is some random quantity.
	Then,
	\begin{align}
		& \|\sqrt{n/J} \ga^{1/2} (\widehat{\Pi}_J - \Pi_J) \beta\| \nonumber
		\\ \leq & \|\sqrt{n/J} \ga^{1/2} \sss_J \beta\|
		+ \|\sqrt{n/J} \ga^{1/2} \rr_J \beta\|
		+ \|\sqrt{n/J} \ga^{1/2} r_n \beta\| \I_{\aaa_J^c} \label{eqLemBias1decomp}
	\end{align}

	Following the argument in \citet[pages~347--350]{CMS07},
	we derive that 
	\begin{align} \label{eqLemBias1sss}
		\eo[\|\sqrt{n/J} \ga^{1/2} \sss_J \beta\|^2]
		= {n \over J} \sum_{j=1}^\infty  \g_j \eo[\langle \sss_J \beta, \phi_j \rangle^2] = o(1). 
	\end{align}
	We note that the assumption that $\sum_{j=1}^\infty |\langle \beta, \phi_j \rangle|<\infty$ imposed by \cite{CMS07} and \cite{YDN23RB} is not necessary for proving \eqref{eqLemBias1sss}, 
	as shown by \cite{Yeon26} (cf.~Section~S3.1 of its supplement). 
	Since it is straightforward to remove the condition that $\sum_{j=1}^\infty |\langle \beta, \phi_j \rangle|<\infty$, 
	we do not provide the details behind it. 
	
	For the second term in \eqref{eqLemBias1decomp}, 
	note that
	\begin{align*}
		\sqrt{n \over J} \ga^{1/2} \rr_n\beta = {\sqrt{n/J} \over 2\pi\iota} \sum_{j=1}^J \int_{\bb_j} \ga^{1/2} (zI-\ga)^{-1/2} G_n(z)^2 K_n(z) (zI-\ga)^{-1/2}\beta dz.
	\end{align*}
	Using \autoref{lemCMS07}(a), we first observe 
	\begin{align*}
		\sup_{z \in \bb_j} \opnorm{\ga^{1/2} (zI-\ga)^{-1/2}}_2^2 = \sum_{l=1}^\infty {\g_l \over |z-\g_l|}
		\leq C j \log j.
	\end{align*}
	Then, by following the argument in \citet[pages~351--532]{CMS07} for deriving Equation~(28) there, 
	we have $\|\sqrt{n/J}\ga_J^{1/2} \rr_n\beta\| \leq \sum_{j=1}^J A_j I_{\ee_j} + \sum_{j=1}^J A_j \I_{\ee_j^c}$,
	where
	\begin{align*}
		\eo \left[ \sum_{j=1}^J A_j I_{\ee_j} \right]
		& \leq C \sum_{j=1}^J \dt_j (j \log j)^{1/2} (n^{-1/2} j \log j)^2  \dt_j^{-1/2}
		\\& \leq C n^{-1/2} \sum_{j=1}^J (j \log j)^2
		\leq n^{-1/2} J^{5/2} (\log J)^2. 
	\end{align*}
	Since $\sum_{j=1}^J A_j \I_{\ee_j^c} = o_\pr(1)$ and $\|\sqrt{n/J} \ga^{1/2} r_n \beta\| \I_{\aaa_J^c} = o_\pr(1)$ by \autoref{lemCMS07}(d)--(e), 
	we have the desired result.
\end{proof}

\begin{lem}\label{lemPerturb2}
	Suppose that  Conditions~\ref{condMomentX}--\ref{condBasicTrunc} hold.
	Then, we have the following:
	\begin{enumerate}[(a)]
		%		\item 
		%		$\opnorm{\widehat{\ga}_J^{1/2} - \ga_J^{1/2}}_\infty = D_{1n} + D_{2n}$, where $\eo[D_{1n}] = O(n^{-1/2} J^{3/2} (\log J)^{1/2})$ and $r_nD_{2n} = o_\pr(1)$ for any random quantity $r_n$. 
		
		\item 
		$\opnorm{\widehat{\Pi}_J - \Pi_J}_\infty = D_{1n} + D_{2n}$, where $\eo[D_{1n}] = O(n^{-1/2} J^2 \log J)$ and $r_nD_{2n} = o_\pr(1)$ for any random quantity $r_n$. 
		
		\item
		$\opnorm{\widehat{\ga}_J^{-1} - \ga_J^{-1}}_\infty = D_{1n} + D_{2n}$, where $\eo[D_{1n}] = o(n^{-1/2} J^3 (\log J)^2)^{1/2}$ and $r_nD_{2n} = o_\pr(1)$ for any random quantity $r_n$.

	\end{enumerate}
\end{lem}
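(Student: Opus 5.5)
Both parts rest on the resolvent (contour) calculus set up above, with the usual split into a good event and a negligible remainder. On $\aaa_J\cap\bigcap_{j\le J}\ee_j$ the sample contours can be replaced by the population contours $\bb_j$. Off this event, the contribution is absorbed into $D_{2n}$, because by \autoref{lemCMS07}(d)--(e) the probability of its complement is at most $C(n^{-1}\sum_{j\le J}\dt_j^{-2}+n^{-1/2}J^2\log J)=o(1)$ under Condition~\ref{condBasicTrunc}. Hence $r_n D_{2n}=o_\pr(1)$ for any $r_n$.

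\textbf{Part (a).} I use the same decomposition as in the proof of \autoref{lemPerturb1}: $\widehat{\Pi}_J-\Pi_J=\sss_J+\rr_J+r_n\I_{\aaa_J^c}$.

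For the linear term, I write $\sss_J=(2\pi\iota)^{-1}\sum_{j\le J}\int_{\bb_j}(zI-\ga)^{-1/2}G(z)(zI-\ga)^{-1/2}dz$. I then bound $\opnorm{(zI-\ga)^{-1/2}}_\infty\le (\dt_j/2)^{-1/2}$ on $\bb_j$ and use the contour length $\pi\dt_j$. This gives $\opnorm{\sss_J}_\infty\le C\sum_{j\le J}\sup_{z\in\bb_j}\opnorm{G(z)}_\infty$. By \autoref{lemCMS07}(b) and Jensen, its expectation is at most $Cn^{-1/2}\sum_{j\le J} j\log j\le Cn^{-1/2}J^2\log J$.

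For the quadratic term, I write $\rr_J$ through $(zI-\ga)^{-1/2}G(z)^2K(z)(zI-\ga)^{-1/2}$. On $\ee_j$, \autoref{lemCMS07}(c) gives $\opnorm{K(z)}_\infty\le C$, and $\opnorm{G(z)}_\infty^2\le \opnorm{G(z)}_\infty/2$. So the same bound applies, and the $\ee_j^c$ pieces go into $D_{2n}$.

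\textbf{Part (b).} I write $\widehat{\ga}_J^{-1}-\ga_J^{-1}$ as a contour integral of $z^{-1}[(zI-\widehat{\ga})^{-1}-(zI-\ga)^{-1}]$ over $\bigcup_{j\le J}\bb_j$. Via the resolvent identity, the integrand is $z^{-1}(zI-\ga)^{-1/2}G(z)K(z)(zI-\ga)^{-1/2}$. On $\bb_j$ we have $|z|^{-1}\le C\g_j^{-1}$, since $\dt_j\le\g_j$. Together with \autoref{lemCMS07}(b)--(c), this bounds the $\opnorm{\cdot}_\infty$-norm of the $j$-th piece on $\ee_j$ by $C\g_j^{-1}\sup_{\bb_j}\opnorm{G(z)}_\infty$.

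Summing naively gives $n^{-1/2}\sum_j \g_j^{-1}j\log j$, which is too crude. The main obstacle is therefore obtaining $(n^{-1/2}J^3(\log J)^2)^{1/2}$. My first approach is to exploit that the operator norm of a sum of pieces localized near orthogonal eigendirections behaves like a maximum or square-sum rather than a plain sum. Concretely, I would bound $\opnorm{\cdot}_\infty^2$ by the Hilbert--Schmidt norm squared and control $\eo[\opnorm{\sum_j\cdot}_2^2]$ through $\eo\langle(\widehat{\ga}-\ga)\phi_k,\phi_l\rangle^2\le Cn^{-1}\g_k\g_l$, using Condition~\ref{condMomentX}. This yields $\sum_{j}\sum_{k,l}$ terms like $n^{-1}\g_k\g_l/(\g_j^2|z-\g_k||z-\g_l|)$. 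Combining \autoref{lemCMS07}(a) with the convexity bound $\g_j/\dt_j\le Cj$ (Condition~\ref{condConvexEV}) and Condition~\ref{condDecayEV}, I expect $\eo[D_{1n}^2]\lesssim n^{-1/2}J^3(\log J)^2$ up to $o(\cdot)$, and Jensen then gives the stated rate. If this bookkeeping fails, I would fall back to citing the analogous bound in \citet[Lemma~S2]{YDN23RB}, adapted with \autoref{lemCMS07}.
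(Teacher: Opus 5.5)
Your part~(a) is correct and is the standard contour argument of \cite{CMS07} that the paper invokes; the paper omits the proof and points to the treatment of the second term in \eqref{eqLemBias1decomp}. The steps are: $\opnorm{\sss_J}_\infty\le C\sum_{j\le J}\sup_{z\in\bb_j}\opnorm{G(z)}_\infty$; the quadratic remainder is dominated by the same quantity on $\ee_j$; and the events $\ee_j^c$, $\aaa_J^c$ are absorbed into $D_{2n}$ via \autoref{lemCMS07}(d)--(e) and Condition~\ref{condBasicTrunc}.

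Part~(b), however, has a genuine gap, and it is not merely unfinished bookkeeping. The target rate $(n^{-1/2}J^3(\log J)^2)^{1/2}$ cannot be reached for the unweighted operator norm, because that norm always carries a factor $\g_J^{-1}$. Deterministically, $\opnorm{\widehat{\ga}_J^{-1}-\ga_J^{-1}}_\infty\ge|\hat{\g}_J^{-1}-\g_J^{-1}|$. If $\hat{\g}_J\le\g_J$, test against $\hat{\phi}_J$: then $\langle\widehat{\ga}_J^{-1}\hat{\phi}_J,\hat{\phi}_J\rangle=\hat{\g}_J^{-1}$, while $\langle\ga_J^{-1}\hat{\phi}_J,\hat{\phi}_J\rangle=\sum_{k\le J}\g_k^{-1}\langle\hat{\phi}_J,\phi_k\rangle^2\le\g_J^{-1}$. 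If $\hat{\g}_J>\g_J$, test against $\phi_J$ in the same way. For Gaussian $X$, the usual first-order expansion gives $\hat{\g}_J-\g_J=\langle(\widehat{\ga}-\ga)\phi_J,\phi_J\rangle+o_\pr(n^{-1/2}\g_J)$, since the second-order term is of order $n^{-1}\g_J J\log J$. The leading term is distributed as $\g_J(\chi^2_{n-1}/n-1)$. Hence $\opnorm{\widehat{\ga}_J^{-1}-\ga_J^{-1}}_\infty\gtrsim n^{-1/2}\g_J^{-1}$ on an event of probability bounded away from zero, and such an event cannot be hidden in $D_{2n}$.

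In your Hilbert--Schmidt computation this shows up in the diagonal terms $k=l=j$. The residue of $z^{-1}(z-\g_j)^{-2}$ at $\g_j$ is $-\g_j^{-2}$, so the first-order part of the operator has $(j,j)$ entry $-\g_j^{-2}\langle(\widehat{\ga}-\ga)\phi_j,\phi_j\rangle$. Its squared Hilbert--Schmidt norm therefore has mean at least of order $n^{-1}\sum_{j\le J}\g_j^{-2}$. Neither \autoref{lemCMS07}(a) nor Conditions~\ref{condConvexEV}--\ref{condDecayEV} can remove these inverse eigenvalues.

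Concretely, take Gaussian $X$ with $\g_j=j^{-6}$ and $J\asymp n^{1/16}$. This satisfies Conditions~\ref{condMomentX}--\ref{condBasicTrunc}, since $n^{-1}\sum_{j\le J}\dt_j^{-2}\asymp n^{-1}J^{15}\to0$. Yet $n^{-1/2}\g_J^{-1}\asymp n^{-1/8}$, while $(n^{-1/2}J^3(\log J)^2)^{1/2}\asymp n^{-5/32}\log n$. So (b) as stated fails in this generality, and neither sharper bookkeeping nor your fallback citation can close the step.

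What the contour argument does deliver is a bound with inverse eigenvalues, for example your \emph{crude} $\eo[D_{1n}]\le Cn^{-1/2}\sum_{j\le J}\g_j^{-1}j\log j$. A rate free of $\g_J^{-1}$ could only be expected for a weighted operator such as $\ga^{1/2}(\widehat{\ga}_J^{-1}-\ga_J^{-1})\ga^{1/2}$. The places where (b) is used downstream would have to be adjusted accordingly.
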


For brevity, we omit the proof of \autoref{lemPerturb2}, as it follows from the standard argument same as that for deriving the convergence of the second term in \eqref{eqLemBias1decomp}. 
See \citet[Lemma~S3 of the supplement]{YDN23RB}, for example.

\section{Ancillary lemmas} \label{appTechLem}

\begin{lem} \label{lemAlge}
	For $x,y \in \HH$ and a bounded linear operator $A$ on $\HH$, we have the following:
	\begin{enumerate}[(a)]
		\item 
		$\opnorm{x\otimes y}_1 = \|x\|\|y\|$;
		
		\item 
		$(Ax)^{\otimes 2} = Ax^{\otimes 2} A^\top$,
		where $A^\top$ stands for the adjoint operator of $A$.
	\end{enumerate}
\end{lem}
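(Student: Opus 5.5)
The statement to prove is \autoref{lemAlge}. Both parts are elementary identities for rank-one operators, and I will verify each directly from the definition $(x\otimes y)(z)=\langle z,x\rangle y$, with no appeal to earlier results.

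For part (a), the trivial cases come first: if $x=0$ or $y=0$, then $x\otimes y=0$ and both sides vanish. Otherwise the trace norm is computed through the polar decomposition, which is more reliable than eigenvalues because the operator need not be self-adjoint. The nuclear norm is the sum of singular values, that is, of the eigenvalues of $|\ttt|=(\ttt^\top\ttt)^{1/2}$; for self-adjoint $\ttt$, such as those arising from \autoref{lemAlge}(b), this agrees with the paper's definition. The adjoint of $x\otimes y$ is $y\otimes x$, since $\langle (x\otimes y)z,w\rangle=\langle z,x\rangle\langle y,w\rangle=\langle z,(y\otimes x)w\rangle$. Composing gives
\begin{align*}
(y\otimes x)(x\otimes y)z=\langle z,x\rangle\langle y,y\rangle x=\|y\|^2 (x\otimes x)z .
\end{align*}
The operator $x\otimes x$ has the single nonzero eigenvalue $\|x\|^2$, with eigenvector $x$. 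Hence $\|y\|^2(x\otimes x)$ has the single nonzero eigenvalue $\|x\|^2\|y\|^2$, and its positive square root has singular value $\|x\|\|y\|$. Therefore $\opnorm{x\otimes y}_1=\|x\|\|y\|$.

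For part (b), fix $z\in\HH$ and expand both sides:
\begin{align*}
(Ax)^{\otimes 2}z=\langle z,Ax\rangle Ax,\qquad
A x^{\otimes 2}A^\top z=A\big(\langle A^\top z,x\rangle x\big)=\langle z,Ax\rangle Ax .
\end{align*}
The two expressions agree for every $z$, which proves the identity.

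The only step requiring care is the convention behind the nuclear norm in part (a). Read literally through eigenvalues, the paper's definition can fail for non-self-adjoint operators: when $x\perp y$, the operator $x\otimes y$ is nilpotent, so all its eigenvalues are zero while $\|x\|\|y\|>0$. I will therefore state explicitly that $\opnorm{\cdot}_1$ is taken as the sum of singular values. In the paper's actual use, $\opnorm{\cdot}_1$ is applied only to positive operators of the form $n^{-1}\sum_i\{\widehat{\ga}_J^{-1/2}(X_i-\bar X)\}^{\otimes 2}$, where the two definitions coincide. That use also combines part (a) with linearity of the trace for positive operators, namely $\sum_i\|u_i\|^2=\opnorm{\sum_i u_i^{\otimes 2}}_1$.
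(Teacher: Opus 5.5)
Your proof is correct. The paper gives no argument of its own for this lemma; it simply cites two lemmas from the supplement of \cite{Yeon26}. Your route is a direct, self-contained verification, and it is only a few lines long:
\begin{itemize}
\item identify the adjoint of $x\otimes y$ as $y\otimes x$;
\item compute $(y\otimes x)(x\otimes y)=\|y\|^2\,x\otimes x$ and read off the single singular value $\|x\|\|y\|$;
\item check (b) pointwise on each $z\in\HH$.
\end{itemize}

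The more valuable thing your version does is expose an issue the citation hides. Under the paper's stated definition of $\opnorm{\cdot}_1$ as a sum of absolute eigenvalues, part (a) is false in general. The only nonzero eigenvalue of $x\otimes y$ is $\langle x,y\rangle$, with eigenvector $y$. So the eigenvalue-based quantity equals $|\langle x,y\rangle|$, which is strictly smaller than $\|x\|\|y\|$ unless $x$ and $y$ are parallel, and it vanishes when $x\perp y$, as you point out.

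The singular-value convention you adopt is therefore the one under which the lemma actually holds. As you also note, the paper only applies (a) with $x=y$, inside a sum of positive rank-one operators in the proof of \autoref{thmMain}(b). There the two conventions coincide, and $\sum_i\|u_i\|^2=\opnorm{\sum_i u_i^{\otimes 2}}_1$ follows from linearity of the trace, so the downstream argument is unaffected.
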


\begin{proof}
	These follow from  \citet[Lemmas~S2(c) and~S1(d)]{Yeon26}.
\end{proof}

\begin{lem} \label{lemWSDcont}
	Let $U,V$ be random elements taking values in a Banach space $\B$ with norm $\|\cdot\|$,
	and $f:\B\to\B'$ be a Lipschitz continuous function on $\B$ with constant $K \in (0,\infty)$ with a potentially distinct Banach space $\B'$ with norm $\|\cdot\|'$.
	Then, we have 
	\begin{align*}
		\wsd(f(U), f(V)) \leq K \cdot \wsd(U, V).
	\end{align*}
\end{lem}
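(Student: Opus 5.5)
The plan is a coupling argument. Recall the definition \eqref{eqWassDist}, with $\B$ in place of the space carrying $U,V$ and $\B'$ carrying $f(U),f(V)$. Fix $\eta>0$. Choose a pair $(U',V')$ on a common probability space with $U'\ed U$ and $V'\ed V$ such that
\begin{align*}
\eo[\|U'-V'\|^2]^{1/2}\le \wsd(U,V)+\eta.
\end{align*}
Such a pair exists by the definition of the infimum. If the infimum is attained, as in \citet[Lemma~8.1]{Mallows72}, we may even take $\eta=0$, but this is not needed.

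Next push the coupling forward through $f$. Set $U''\equiv f(U')$ and $V''\equiv f(V')$. These are random elements of $\B'$: $f$ is Lipschitz, hence continuous, hence Borel measurable. Moreover $U''\ed f(U)$ and $V''\ed f(V)$, because equality in distribution is preserved under a measurable map. So $(U'',V'')$ is an admissible coupling in the infimum defining $\wsd(f(U),f(V))$.

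The Lipschitz property gives, pointwise, $\|U''-V''\|'\le K\|U'-V'\|$. Squaring and taking expectations yields
\begin{align*}
\wsd(f(U),f(V))\le \eo[(\|U''-V''\|')^2]^{1/2}\le K\,\eo[\|U'-V'\|^2]^{1/2}\le K\{\wsd(U,V)+\eta\}.
\end{align*}
Letting $\eta\downarrow0$ gives the claim.

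The identical argument applies to the conditional distance $\wsd^X$ of \eqref{eqWassDistCond}, with $\eo$ replaced by $\eo^X$; this is the form used later in the paper. If $\wsd(U,V)=\infty$ there is nothing to prove.

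No step is a real obstacle. The only points needing care are the measurability of $f$ (which follows from continuity) and the fact that pushforward preserves the marginal laws.
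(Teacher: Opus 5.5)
Your proposal is correct and uses the same coupling argument as the paper: push a coupling of $(U,V)$ forward through $f$, apply the Lipschitz bound pointwise, then take expectations. The only difference is that you take an $\eta$-optimal coupling and let $\eta\downarrow 0$, whereas the paper invokes attainment of the infimum \citep[Lemma~8.1]{Mallows72}; your version is slightly more self-contained, since attainment is not actually needed.
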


\begin{proof}
	Since the infimum in the Wasserstein distance is achieved \cite[cf.][Lemma~8.1]{Mallows72},
	we take random elements $\check{U}$ and $\check{V}$ taking values in $\B$ such that $\check{U} \ed U$, $\check{V} \ed V$ and $\wsd(U,V)^2 = \eo[\|\check{U}-\check{V}\|^2]$.
	Then, using the Lipschitz continuity, we have the desired result, as
	\begin{align*}
		\wsd(f(U), f(V))
		& = \inf_{U^\dagger\ed U, V^\dagger\ed V} \eo[\|f(U^\dagger)-f(V^\dagger)\|'{^2}]
		\\& \leq K^2 \inf_{U^\dagger\ed U, V^\dagger\ed V} \eo[\|U^\dagger-V^\dagger\|{^2}]
		\\&  \leq K^2 \eo[\|\check{U}-\check{V}\|^2]
		= K^2 \wsd(U,V)^2.
	\end{align*}
\end{proof}

\begin{lem} \label{lemSobolevEmb}
	For $f \in \W^{1,2}([0,1])$, we have
	\begin{align*}
		\sup_{u \in [0,1]}|f(u)| \leq \sqrt{2} \|f\|_{\W^{1,2}([0,1])},
	\end{align*}
	where $\|\cdot\|_{\W^{1,2}([0,1])}$ denotes the norm on $\W^{1,2}([0,1])$ defined by  
	\begin{align*}
		\|f\|_{\W^{1,2}([0,1])}^2 \equiv \|f\|_{L^2([0,1])}^2 + \|f'\|_{L^2([0,1])}^2,
		\quad f \in \W^{1,2}([0,1]),
	\end{align*}
	and $\|\cdot\|_{L^2([0,1])}$ is the norm on $L^2([0,1])$ defined by $\|f\|_{L^2([0,1])}^2 \equiv \int_0^1 f(u)^2 du$ for $f \in L^2([0,1])$.
\end{lem}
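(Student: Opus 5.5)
The plan is to prove the inequality pointwise and then take the supremum. Since $f\in\W^{1,2}([0,1])$, we may take the absolutely continuous representative of $f$, so that the fundamental theorem of calculus gives $f(u)-f(t)=\int_t^u f'(s)\,ds$ for all $u,t\in[0,1]$. I would fix $u\in[0,1]$ and write
\begin{align*}
	f(u) = f(t) + \int_t^u f'(s)\, ds, \qquad t \in [0,1],
\end{align*}
and then integrate both sides over $t\in[0,1]$. This averaging replaces $f(t)$ by $\int_0^1 f(t)\,dt$ and removes the unknown point value.

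The next step is to bound the two resulting terms separately with Cauchy--Schwarz. For the averaged term, $|\int_0^1 f(t)\,dt|\le \|f\|_{L^2([0,1])}$. For the derivative term, $|\int_t^u f'(s)\,ds|\le \int_0^1|f'(s)|\,ds\le \|f'\|_{L^2([0,1])}$ for every $t$, so its average over $t$ is bounded by the same quantity. Together these give
\begin{align*}
	|f(u)| \leq \|f\|_{L^2([0,1])} + \|f'\|_{L^2([0,1])}.
\end{align*}
Finally, the elementary inequality $a+b\le\sqrt{2}(a^2+b^2)^{1/2}$, i.e.\ Cauchy--Schwarz in $\R^2$, yields $|f(u)|\le\sqrt2\|f\|_{\W^{1,2}([0,1])}$. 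Since the bound does not depend on $u$, taking the supremum over $u\in[0,1]$ completes the argument.

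The only step needing care is the justification that elements of $\W^{1,2}([0,1])$ have absolutely continuous representatives whose classical derivative agrees almost everywhere with the weak derivative $f'$. This is the standard one-dimensional characterization of Sobolev functions, which I would cite. The supremum is then understood for this continuous representative, and this is the representative that is implicitly used in defining $S_{\mathrm{sup},J}$. Everything else is elementary.
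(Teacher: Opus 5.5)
Your proposal is correct and takes essentially the same route as the paper: apply the fundamental theorem of calculus, average over the second point to obtain $|f(u)|\le \int_0^1|f'(s)|\,ds+\int_0^1|f(v)|\,dv$, and finish with Cauchy--Schwarz and $a+b\le\sqrt{2}(a^2+b^2)^{1/2}$. Your pointwise bound $|\int_t^u f'(s)\,ds|\le\int_0^1|f'(s)|\,ds$ for each $t$ is slightly cleaner than the paper's Fubini step, which as written accounts only for $v<u$. Your remark on the absolutely continuous representative also makes explicit a justification the paper leaves implicit.
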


\begin{proof}
	We follow the line of the proof of an Sobolev embedding theorem such as those in \citet[Theorem~4 in Chapter~5]{Evans10}. 
	By the fundamental theorem of calculus, for $u,v \in [0,1]$, we have
	\begin{align*}
		|f(u)-f(v)| = \left| \int_v^u f'(s)ds \right|
		\leq \int_v^u |f'(s)|ds,
	\end{align*}
	implying
	\begin{align*}
		|f(u)| \leq |f(u)-f(v)| + |f(v)|
		\leq \int_v^u |f'(s)| ds + |f(v)|.
	\end{align*}
	Then, for each $u \in [0,1]$, 
	\begin{align*}
		|f(u)|
		& = \int_0^1 |f(u)| dv
		\leq \int_0^1 \int_v^u |f'(s)| ds dv + \int_0^1 |f(v)| dv
		\\& = \int_0^u \int_0^s |f'(s)| dv ds + \int_0^1 |f(v)| dv
		\\& = \int_0^u s|f'(s)| ds + \int_0^1 |f(v)| dv
		\\& \leq  \int_0^1 |f'(s)|ds + \int_0^1 |f(v)| dv.
	\end{align*}
	This concludes that
	\begin{align*}
		\sup_{u \in [0,1]} |f(u)|
		&\leq \int_0^1 |f'(s)|ds + \int_0^1 |f(v)| dv
		\\& \leq \left( \int_0^1 f'(s)^2ds \right)^{1/2} + \left( \int_0^1 f(v)^2 dv \right)^{1/2}
		\\& = \|f\|_{L^2([0,1])} + \|f'\|_{L^2([0,1])}
		\\& \leq \sqrt{2} \left( \|f\|_{L^2([0,1])}^2 + \|f'\|_{L^2([0,1])}^2 \right)^{1/2}
		= \sqrt{2} \|f\|_{\W^{1,2}([0,1])}.
	\end{align*}
	
\end{proof}

\begin{lem} \label{lemKDbddWD}
	Let $F,G$ be the cumulative distribution functions with probability distributions $P,Q$ on $\R$, respectively.
	If $G$ is absolutely continuous with $\sup_{s \in \R} |G'(s)|\leq C$,
	then 
	\begin{align*}
		\sup_{s \in \R} |F(s)-G(s)| \leq \sqrt{2C \wsd(P,Q)}. 
	\end{align*}
\end{lem}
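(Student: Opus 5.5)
The plan is to couple $P$ and $Q$ optimally and smooth the indicator of $(-\infty,s]$ at a bandwidth that trades the Lipschitz bound on $G$ against the transport cost. By Lemma~8.1 of \cite{Mallows72}, the infimum in \eqref{eqWassDist} is attained. We therefore fix random variables $U\sim P$ and $V\sim Q$ on a common probability space with $\eo[(U-V)^2]=\wsd(P,Q)^2$. Write $w\equiv\wsd(P,Q)$ and fix $s\in\R$ and $h>0$.

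For the upper bound on $F(s)-G(s)$, we split on the event $\{|U-V|>h\}$:
\begin{align*}
F(s)=\pr(U\le s)\le \pr(V\le s+h)+\pr(|U-V|>h)\le G(s)+Ch+\pr(|U-V|>h).
\end{align*}
The second inequality uses $G(s+h)-G(s)=\int_s^{s+h}G'(t)\,dt\le Ch$, which is valid because $G$ is absolutely continuous. For the reverse direction, note that $G(s-h)\le \pr(U\le s)+\pr(|U-V|>h)$, since $\{V\le s-h\}\subseteq\{U\le s\}\cup\{|U-V|>h\}$. Hence
\begin{align*}
G(s)-F(s)\le Ch+\pr(|U-V|>h).
\end{align*}
Together these give $\sup_s|F(s)-G(s)|\le Ch+\pr(|U-V|>h)$.

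Next we control the tail term by Markov's inequality at first order. By Cauchy--Schwarz,
\begin{align*}
\pr(|U-V|>h)\le h^{-1}\eo|U-V|\le h^{-1}w.
\end{align*}
The bound becomes $Ch+w/h$. If $w=0$, letting $h\downarrow0$ gives $F=G$, so we may assume $w>0$. Minimizing over $h$ at $h=\sqrt{w/C}$ yields $2\sqrt{Cw}$. Since $2\sqrt{Cw}\le\sqrt{2Cw}$ fails, this constant falls short of the stated $\sqrt{2C\wsd(P,Q)}$ by a factor $\sqrt2$.

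The only delicate point is recovering the constant $\sqrt{2}$, and the fix is to use the density bound more carefully. Define $\psi(u)\equiv\min\{1,(s+h-u)_+/h\}$, which is $h^{-1}$-Lipschitz and satisfies $\I(u\le s)\le\psi(u)\le\I(u\le s+h)$. Then
\begin{align*}
F(s)\le\eo\psi(U)\le\eo\psi(V)+h^{-1}\eo|U-V|\le G(s)+\eo[\psi(V)-\I(V\le s)]+w/h.
\end{align*}
Here $\eo[\psi(V)-\I(V\le s)]=\int_s^{s+h}(s+h-t)h^{-1}G'(t)\,dt\le Ch/2$. The lower direction is symmetric. This gives $\sup_s|F(s)-G(s)|\le Ch/2+w/h$, and the choice $h=\sqrt{2w/C}$ yields exactly $\sqrt{2Cw}$, as claimed.
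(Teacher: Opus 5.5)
Your final argument is correct and is essentially the proof the paper relies on: the paper just cites \citet[Proposition~1.2]{Ross11}, whose proof is exactly your ramp-smoothing of $\I(u\le s)$ at bandwidth $h$, giving $Ch/2+w/h$ and then choosing $h=\sqrt{2w/C}$ with $w=\mathsf{W}(P,Q)$. What you add is the Cauchy--Schwarz step $\eo|U-V|\le \mathsf{W}(P,Q)$ under an optimal coupling, which the paper leaves implicit and which is needed to apply Ross's bound (stated for the first-order, Lipschitz-test-function Wasserstein distance) to the second-order distance used here; your preliminary Markov-based detour giving $2\sqrt{Cw}$ can simply be dropped.
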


\begin{proof}
	See \citet[Prposition~1.2]{Ross11}, for example.
\end{proof}

\begin{lem} \label{lemSqDensity}
	Let $G_J$ be a Gaussian random element taking values in $\HH$ with mean zero and covariance operator $\Pi_J \equiv \sum_{j=1}^J \phi_j^{\otimes 2}$ for some orthonormal basis elements $\{\phi_j\}_{j=1}^\infty$ in $\HH$.
	\begin{enumerate}[(a)]
		\item 
		The distribution of $\|G_J\|^2$ is continuous.
		
		\item 
		The density of  $\|G_J\|^2$  is bounded by $CJ^{-1/2}$ for sufficiently large $J$, where $C$ is a generic constant.
		
	\end{enumerate}
\end{lem}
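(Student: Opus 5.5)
The plan is to reduce everything to a chi-square variable. Since $G_J$ is a centered Gaussian element with covariance $\Pi_J=\sum_{j=1}^J\phi_j^{\otimes 2}$, its coordinates $\xi_j\equiv\langle G_J,\phi_j\rangle$, $j=1,\dots,J$, form a centered Gaussian vector with identity covariance. Hence they are i.i.d.\ $\nd(0,1)$. Moreover, $\langle G_J,\phi_j\rangle$ has variance zero for $j>J$ and for any direction orthogonal to all $\phi_j$. So almost surely $G_J=\sum_{j=1}^J\xi_j\phi_j$ and $\|G_J\|^2=\sum_{j=1}^J\xi_j^2\sim\chi^2_J$. Both parts then become statements about the explicit $\chi^2_J$ density
\begin{align*}
	f_J(s)={s^{J/2-1}e^{-s/2}\over 2^{J/2}\ga_{\mathrm{func}}(J/2)}, \quad s>0.
\end{align*}

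For part (a), this density exists for every $J\ge1$, so the distribution is absolutely continuous and in particular has no atoms.

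For part (b), with $J\ge3$ I will maximize $f_J$ by setting the logarithmic derivative $(J/2-1)/s-1/2$ to zero. This gives the mode $s=J-2$ and
\begin{align*}
	\sup_s f_J(s)={(J-2)^{J/2-1}e^{-(J-2)/2}\over 2^{J/2}\ga_{\mathrm{func}}(J/2)}.
\end{align*}
Writing $k=J/2-1$, this equals $k^ke^{-k}/\{2\ga_{\mathrm{func}}(k+1)\}$. Stirling's lower bound $\ga_{\mathrm{func}}(k+1)\ge\sqrt{2\pi k}\,k^ke^{-k}$ then gives $\sup_s f_J(s)\le (2\sqrt{2\pi k})^{-1}\le CJ^{-1/2}$ for $J$ large.

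The only real obstacle is the first step: making rigorous that $G_J$ lives almost surely in $\mathrm{span}\{\phi_1,\dots,\phi_J\}$. I would argue that $\eo\|(I-\Pi_J)G_J\|^2=\opnorm{(I-\Pi_J)\Pi_J(I-\Pi_J)}_1=0$, which forces $(I-\Pi_J)G_J=0$ almost surely.
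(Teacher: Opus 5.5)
Your proof is correct and follows the paper's argument: write $G_J=\sum_{j=1}^J\xi_j\phi_j$ so that $\|G_J\|^2\sim\chi^2_J$, locate the mode of the density at $J-2$, and bound the maximal density with Stirling's formula (you additionally justify that $G_J$ lies almost surely in the span of $\phi_1,\dots,\phi_J$, which the paper takes for granted). Your non-asymptotic bound $\Gamma(k+1)\ge\sqrt{2\pi k}\,k^ke^{-k}$ holds for all real $k>0$, which covers the half-integer case $k=J/2-1$ for odd $J$; it yields the explicit bound $\{2\sqrt{\pi(J-2)}\}^{-1}$ for every $J\ge3$ and is a slightly cleaner version of the paper's asymptotic Stirling step.
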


\begin{proof}
	The Karhunen--Lo\'{e}ve expansion provides $G_J = \sum_{j=1}^J \xi_j \phi_j$, where $\{\xi_j\}_{j=1}^J$ are iid standard normal variables.
	This then implies that $\|G_J\|^2 = \sum_{j=1}^J \xi_j^2 \sim \chi^2(J)$, whose distribution is continuous.
	For the second part, it is well known that the density of $\chi^2(J)$ is maximized at $J-2$ if $J > 2$. 
	The maximum value is then 
	\begin{align*}
		M_J \equiv {1 \over 2^{J/2} \ga(J/2)} (J-2)^{J/2-1} e^{-(J-2)/2}.
	\end{align*}
	Using the Sterling's formula \cite[e.g.,][Formula~25.15]{SLL99},
	as $J\to\infty$, we have
	\begin{align*}
		\ga(J/2) = {\sqrt{2\pi} \over (J/2)^{1/2}} {(J/2)^{J/2} \over e^{J/2}} \{1 + O(2/J)\},
	\end{align*}
	yielding
	\begin{align*}
		M_J
		= {e \over 2 \sqrt{\pi}} {J^{1/2} \over J-2} (1-2/J)^{J/2} \{1 + O(2/J)\}.
	\end{align*}
	Since $\lim_{J\to\infty}  (1-2/J)^{J/2} = e^{-1}$, we conclude $M_J = O(J^{-1/2})$.
\end{proof}

\begin{lem} \label{lemSupDensity}
	Let $\{G_J(u)\}_{u \in [0,1]}$ be a Gaussian process with mean zero and covariance function $\Pi_J$ on $[0,1]^2$ defined by $\Pi_J(u_1,u_2) \equiv \sum_{j=1}^J \phi_j(u_1)\phi_j(u_2)$ for $(u_1,u_2)^\top \in [0,1]^2$,
	where $\{\phi_j\}_{j=1}^\infty$ satisfies $\int_0^1 \phi_j(u) \phi_j'(u) du = \I(j=j')$ for each $j,j' \in \N$. 
	\begin{enumerate}[(a)]
		\item The distribution of $\sup_{u \in [0,1]}|G_J(u)|$ is continuous.
		
		\item The density of $\sup_{u \in [0,1]}|G_J(u)|$ is bounded by $C J^{1/4}$, 
		where $C$ is a generic constant.
	\end{enumerate}
\end{lem}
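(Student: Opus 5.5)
The plan is to write $G_J$ through its Karhunen--Lo\`{e}ve expansion, $G_J(u)=\sum_{j=1}^J \xi_j\phi_j(u)$ with $\{\xi_j\}_{j=1}^J$ iid standard normal, as in the proof of \autoref{lemSqDensity}. This makes $\{G_J(u)\}_{u\in[0,1]}$ a centered Gaussian process driven by a $J$-dimensional nondegenerate Gaussian vector. In the setting where the lemma is used ($\HH=\W^{1,2}([0,1])$), the $\phi_j$ are continuous by \autoref{lemSobolevEmb}, so $G_J$ has continuous sample paths. Then $M_J\equiv\sup_{u\in[0,1]}|G_J(u)|$ is a measurable, almost surely finite supremum over a countable dense set, and it is a convex, $1$-homogeneous functional of $\bm{\xi}=(\xi_1,\dots,\xi_J)^\top$. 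I will also rewrite $M_J=\sup_{(u,s)\in[0,1]\times\{-1,1\}} sG_J(u)$, a supremum of a centered Gaussian process without absolute values, so that standard results on suprema of Gaussian processes apply directly.

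For part~(a), I would invoke Tsirelson's theorem. It says that the supremum of a separable centered Gaussian process with almost surely bounded paths has a distribution that is absolutely continuous on $(r_0,\infty)$, where $r_0$ is the left endpoint of its support, and can have an atom only at $r_0$. Here $r_0=0$, since $\bm{\xi}$ has full support near $\bm{0}$. An atom at $0$ would require $\pr(G_J\equiv 0)>0$, i.e.\ $\pr(\sum_j\xi_j\phi_j=0)>0$. This is impossible because the $\phi_j$ are linearly independent and $\bm{\xi}$ has a density on $\R^J$. Hence the distribution of $M_J$ is continuous.

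For part~(b), I would apply the anti-concentration inequality of Chernozhukov, Chetverikov and Kato for suprema of Gaussian processes. In its nonconstant-variance form it gives, for every $\epsilon>0$,
\begin{align*}
\sup_{x\in\R}\pr(|M_J-x|\le\epsilon)\le C\epsilon\,{\bar\sigma_J\over\underline\sigma_J^2}\left(\eo[M_J]+\sqrt{1\vee\log(\underline\sigma_J/\epsilon)}\right),
\end{align*}
where $\bar\sigma_J^2=\sup_u\Pi_J(u,u)$ and $\underline\sigma_J^2=\inf_u\Pi_J(u,u)$. Dividing by $2\epsilon$ and letting $\epsilon\downarrow0$ bounds the density, provided the logarithmic term is first removed. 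I would remove it by restricting to $x$ bounded away from $0$, or by using the sharper version for constant-variance-normalized processes. The remaining work is to control three quantities: (i) $\bar\sigma_J^2=\sup_u\sum_{j\le J}\phi_j(u)^2$, which is at most $2\sum_{j\le J}\|\phi_j\|_{\W^{1,2}}^2$ by \autoref{lemSobolevEmb}, and is $\asymp J$ for a uniformly bounded system such as the trigonometric basis \eqref{eqBasisTri}; (ii) $\eo[M_J]\le\eo[\sup_u|G_J(u)|]$, which I would bound by Dudley's entropy integral using the canonical metric $d(u,u')^2=\sum_j(\phi_j(u)-\phi_j(u'))^2\le |u-u'|\sum_j\|\phi_j'\|^2$, giving $\eo[M_J]\lesssim\bar\sigma_J\sqrt{\log J}$; and (iii) a lower bound $\underline\sigma_J^2\gtrsim J$. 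Combining these gives a density bound of order $\bar\sigma_J\eo[M_J]/\underline\sigma_J^2\lesssim\sqrt{\log J}$, which is $\le CJ^{1/4}$ for large $J$. It would be natural to first rescale $G_J$ by $J^{-1/2}$ and track how the density bound transforms back.

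The main obstacle is step~(iii), the uniform lower bound on the pointwise variance $\sum_{j\le J}\phi_j(u)^2$. Orthonormality alone does not prevent this variance from degenerating at some points. If it fails, I would fall back on a cruder route: use the log-concavity of the law of the convex functional $M_J(\bm{\xi})$, together with the Gaussian concentration of $M_J$ around its median at scale $\bar\sigma_J$, to bound the density by $C\,\mathrm{median}(M_J)/\var(M_J)$-type quantities. I would then aim only for the weaker $CJ^{1/4}$ rate stated in the lemma, which leaves slack for polynomial losses in $J$ in these variance estimates.
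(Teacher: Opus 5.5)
Your part (a) is fine. Tsirelson's theorem, together with your check that the only possible atom (at $0$) has mass $\pr(\bm{\xi}=\bm{0})=0$, is a valid substitute for the paper's citation of a continuity result for Gaussian extremes.

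Part (b), however, has a genuine gap. The Chernozhukov--Chetverikov--Kato route depends on your step (iii), $\inf_u \Pi_J(u,u)\gtrsim J$. This is false for families the lemma allows: $\phi_j(u)=\sqrt{2}\sin(j\pi u)$ is orthonormal in $L^2([0,1])$, yet $\Pi_J(0,0)=0$ for every $J$. The Brownian-motion eigenfunctions $\sqrt{2}\sin((j-1/2)\pi u)$ behave the same way. So $\underline\sigma_J=0$, and the non-constant-variance inequality gives nothing: it requires $\underline\sigma>0$, and its constant depends on $(\underline\sigma,\bar\sigma)$ in an unspecified way. Steps (i) and (ii) are also family-specific. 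For a general orthonormal family, neither $\sup_u\sum_{j\le J}\phi_j(u)^2$ nor the derivative norms in Dudley's integral are controlled by $J$. So even where the argument runs, it gives a family-dependent bound rather than $CJ^{1/4}$ with a generic $C$. (Your displayed inequality is also mis-normalized: it is not invariant under $(G_J,\varepsilon)\mapsto(\lambda G_J,\lambda\varepsilon)$, and the CCK term should be $\eo[\sup_t X_t/\sigma_t]$.) The fallback does not repair this, because it points the wrong way. Gaussian concentration of $M_J$ at scale $\bar\sigma_J$ bounds the spread of $M_J$ from above. A density bound is an anti-concentration statement and needs the spread bounded from below. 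Moreover, the density of $\|\bm{\xi}\|_\phi$ need not be log-concave; Ehrhard's inequality only gives concavity of $\Phi^{-1}\circ F$ for its distribution function $F$.

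The missing idea is one you noted in passing. $M_J=\|\bm{\xi}\|_\phi$, where $\|\bm{v}\|_\phi\equiv\sup_u|\bm{v}^\top\bm{\phi}(u)|$ is a norm on $\R^J$, so this is a question of finite-dimensional convex geometry and needs no process-level quantities. The paper's argument runs as follows:
\begin{enumerate}[(i)]
\item Write $\pr(r<M_J\le r+\varepsilon)=\pr(\bm{\xi}\in(B_{\phi,r})^{\phi,\varepsilon}\setminus B_{\phi,r})$.
\item Note that the $\|\cdot\|_\phi$-enlargement of the convex set $B_{\phi,r}$ lies inside its Euclidean $\varepsilon/C_1$-enlargement whenever $\|\bm{v}\|_\phi\ge C_1\|\bm{v}\|_{\R^J}$.
\item Apply Ball's bound on the Gaussian surface area of convex sets in $\R^J$, which is of order $J^{1/4}$ (Rai\v{c} states it for $\varepsilon$-neighbourhoods with explicit constants). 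This gives $\pr(r<M_J\le r+\varepsilon)\le CJ^{1/4}\varepsilon/C_1$ uniformly in $r$.
\end{enumerate}
The paper justifies $C_1$ only by equivalence of norms, which alone would allow $C_1$ to depend on $J$. But $L^2$-orthonormality gives $\|\bm{v}\|_\phi\ge\|\bm{v}^\top\bm{\phi}\|_{L^2([0,1])}=\|\bm{v}\|_{\R^J}$, so $C_1=1$. The bound then holds with a constant free of $J$ and of the family, with no variance lower bound, entropy estimate, or control of $\eo[M_J]$. The exponent $1/4$ in the statement is exactly Ball's exponent.
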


\begin{proof}
	The Karhunen--Lo\'{e}ve expansion provides $G_J(u) = \sum_{j=1}^J \xi_j \phi_j(u)$ for $t \in [0,1]$, where $\{\xi_j\}_{j=1}^J$ are iid standard normal variables.
	Then, by the continuity of $\phi_j$ \cite[cf.][Section~7.3]{HE15} and the compactness of $[0,1]$,
	we have
	\begin{align*}
		\sup_{u \in [0,1]} |G_J(u)| \leq \sum_{j=1}^J |\xi_j| \sup_{u \in [0,1]} |\phi_j(u)| < \infty.
	\end{align*}
	Therefore, the distribution of $\sup_{u \in [0,1]}|G_J(u)|$ is continuous \cite[e.g.,][Theorems~1 or~3]{Yang25}.
	
	To prove the second part, 
	let us introduce some notation along with several well-known facts.
	Define $\bm{\phi}:[0,1]\to\R^J$ as $\bm{\phi}(u) \equiv (\phi_1(u), \dots, \phi_J(u))^\top$ for $u \in [0,1]$.
	We then construct a norm $\|\cdot\|_\phi$ on $\R^J$ as $\|\bm{v}\|_\phi \equiv \sup_{u \in [0,1]} |\bm{v}^\top \bm{\phi}(u)|$ for $\bm{v} \in \R^J$ and denote the Euclidean norm on $\R^J$ as $\|\cdot\|_{\R^J}$. 
	Since any norms on $\R^J$ are equivalent,
	there exist constants $C_1>0,C_2 > 0$ such that $C_1 \|\bm{v}\|_{\R^J} \leq \|\bm{v}\|_\phi \leq C_2 \|\bm{v}\|_{\R^J}$ for all $\bm{v} \in \R^J$. 
	For $r \geq 0$, we write the closed ball based on the $\phi$-norm $\|\cdot\|_\phi$ as $B_{\phi,r}$
	\begin{align*}
		%		B_r 
		%		& \equiv \{ \bm{v} \in \R^J:\|\bm{v}\| \leq r \}, \quad
		B_{\phi,r}
		\equiv \{ \bm{v} \in \R^J: \|\bm{v}\|_\phi \leq r \}.
	\end{align*}
	Since $\|\cdot\|_\phi$ is a norm, the $\phi$-ball $B_{\phi,r}$ is convex in $\R^J$. 
	For a set $A \subseteq \R^J$ and $\e \in (0,\infty)$, we define its $\e$-enlargements in either norms as 
	\begin{align*}
		A^\e 
		& \equiv \{ \bm{x} \in \R^J: \inf\{ \|\bm{x}-\bm{y}\|:\bm{y} \in A \} \leq \e \}, \\
		A^{\phi,\e} 
		& \equiv \{ \bm{x} \in \R^J: \inf\{ \|\bm{x}-\bm{y}\|:\bm{y} \in A \} \leq \e \},
	\end{align*}
	respectively.
	Due to the equivalence between the norms, for each $r \geq 0$ and $\e \in (0,\infty)$, we have $(B_{\phi,r})^{\phi,\e} \subseteq (B_{\phi,r})^{\e/C_1}$. 
	
	We now set $\bm{\xi} \equiv (\xi_1, \dots, \xi_J)^\top$ so that $\bm{\xi} \sim \nd(\bm{0},\bm{I}_J)$ is a $J$-dimensional standard Gaussian random vector and
	\begin{align*}
		M_J \equiv \sup_{u \in [0,1]}|G_J(u)| 
		= \sup_{u \in [0,1]} |\bm{\xi}^\top \bm{\phi}(u)| = \|\bm{\xi}\|_\phi.
	\end{align*}
	Then, by using the upper bounds of the Gaussian probability over the boundary of a convex set in $\R^J$ as in \cite{Ball93} or \cite{Raic19},
	we derive
	\begin{align*}
		& \sup \left\{ \e^{-1} \{\pr (M_J \leq r+\e) - \pr(M_J \leq r)\} : \e \in (0,\infty) \right\}
		\\ = & \sup \left\{ \e^{-1}\pr(\bm{\xi} \in (B_{\phi,r})^{\phi,\e} \bsh B_{\phi,r}):\e \in (0,\infty) \right\}
		\\ \leq & C_1^{-1} \sup \left\{ C_1\e^{-1}\pr(\bm{\xi} \in (B_{\phi,r})^{\e/C_1} \bsh B_{\phi,r}):\e \in (0,\infty) \right\}
		\\ \leq & CJ^{1/4},
	\end{align*}
	where $C$ denotes a generic constant.
	This proves the second part and completes the proof. 
\end{proof}

\begin{lem} \label{egCounterCond3}
	Let $\{a_j\}$ be a non-increasing sequence of positive real values such that $\sum_{j=1}^\infty a_j<\infty$.
	Then, there is a case where $\sup_{j \in \N} a_j j \log j = \infty$. 
\end{lem}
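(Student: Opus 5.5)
The plan is to exhibit an explicit non-increasing summable sequence that equals $1/(j\log j)$ along a very sparse set of indices and is flat in between. Choose a rapidly increasing sequence of integers $N_1<N_2<\cdots$, say $N_k \equiv 2^{2^k}$ or any sequence growing fast enough to make the series below converge. Define $a_j$ to be constant on each block $(N_{k-1},N_k]$, with value $c_k \equiv (N_k\log N_k)^{-1}$; for the first block, including $j=1$, where $\log 1=0$, take $a_j\equiv c_1$.

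\emph{Monotonicity.} Since $N\mapsto N\log N$ is increasing, $c_k$ is decreasing in $k$. Hence $\{a_j\}$ is non-increasing and positive.

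\emph{Divergence of the supremum.} At $j=N_k$ the product is
\begin{align*}
a_{N_k} N_k\log N_k = 1 .
\end{align*}
This alone only gives boundedness, so I will modify the block values to $c_k \equiv k(N_k\log N_k)^{-1}$. Then $a_{N_k}N_k\log N_k = k\to\infty$, so $\sup_{j} a_j j\log j=\infty$.

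\emph{Monotonicity after the modification.} This needs $k/(N_k\log N_k)$ to be decreasing in $k$. That holds whenever $N_{k+1}\log N_{k+1} \geq \frac{k+1}{k} N_k\log N_k$, which is automatic for the doubly exponential choice.

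\emph{Summability, the main step.} The $k$-th block contributes at most
\begin{align*}
N_k c_k = \frac{k}{\log N_k}.
\end{align*}
With $N_k = 2^{2^k}$ we have $\log N_k = 2^k\log 2$. Therefore
\begin{align*}
\sum_j a_j \leq \sum_k \frac{k}{2^k \log 2} < \infty .
\end{align*}
The only delicate point is balancing the two requirements. The block length is $\approx N_k$ and the block value is $\approx k/(N_k\log N_k)$, so each block's mass is $k/\log N_k$. This must be summable while $k\to\infty$ drives the supremum to infinity. Any choice with $\sum_k k/\log N_k<\infty$ works, and doubly exponential growth is the natural one. I also need to handle the small indices $j=1,2$ (where $\log j\le \log 2$) by fixing the first block's value, which does not affect either conclusion.
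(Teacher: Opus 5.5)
Your construction is correct and is essentially the paper's: both take $a_j$ piecewise constant on very sparse blocks, with block values tuned so that the block masses are summable while $a_j j\log j$, evaluated near the end of each block, diverges. The paper uses the blocks $2^{(k-1)^4+1}\le j<2^{k^4+1}$ with value $1/(k^2 2^{k^4})$ (mass at most $2/k^2$, product $k^2\log 2$ at $j=2^{k^4}$), whereas you use $(N_{k-1},N_k]$ with $N_k=2^{2^k}$ and value $k/(N_k\log N_k)$ (mass at most $k/(2^k\log 2)$, product $k$ at $j=N_k$).
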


\begin{proof}
	For this counterexample, we consider a sequence defined through dyadic blocks.
	Set $a_1=1$, and for each $j \in \N \bsh \{1\}$, pick $m \in \N$ such that $2^m \leq j < 2^{m+1}$ and $k\in \N$ such that $(k-1)^4 < m \leq k^4$,
	and define 
	\begin{align*}
		a_j = {1 \over k^2 2^{k^4}}.
	\end{align*}
	Note that the $a_j$'s are positive and the sequence is non-increasing. 
	Also, this sequence is summable as
	\begin{align*}
		\sum_{j=1}^\infty a_j
		& = 1 + \sum_{k=1}^\infty \sum_{m>(k-1)^4}^{k^4} \sum_{j=2^m}^{2^{m+1}-1} {1 \over k^2 2^{k^4}}
		= 1 + \sum_{k=1}^\infty \sum_{m>(k-1)^4}^{k^4} 2^m {1 \over k^2 2^{k^4}}
		\\& \leq 1 + \sum_{k=1}^\infty {2 \over k^2} <\infty
	\end{align*}
	since $\sum_{m>(k-1)^4}^{k^4} 2^m
	= 2^{k^4+1}-2^{(k-1)^4}
	< 2^{k^4+1}$.
	However, $a_j j \log j$ is not bounded 
	because
	\begin{align*}
		\sup_{j \in \N} a_j j \log j
		& \geq \sup_{k \in \N} {2^{k^4} \log (2^{k^4}) \over k^2 2^{k^4}}
		= \sup_{k \in \N} k^2 \log 2 = \infty. 
	\end{align*}
	
\end{proof}

\begin{lem} \label{lemProjIndicator}
	Suppose that the eigenfunctions are given by the trigonometric functions in \eqref{eqBasisTri} that forms an orthonormal basis for $\HH = L^2([0,1])$ \citep[Theorem~2.4.18]{HE15} and that $\g_j \asymp j^{-a}$ for some $a>2$.
	We further assume that the projection direction of interest is given by an indicator as $v(u) = \I(a \leq u \leq b)$ for some $a,b \in [0,1]$ with $a<b$ and $l \equiv b-a < 1$; clearly, $v \in \HH$. 
	Then, Condition~\ref{condProjScale} is satisfied as 
	\begin{align*}
		{J \over \tau_J(v)} \asymp J^{-(a-2)} = o(1).
	\end{align*}
\end{lem}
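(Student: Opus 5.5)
The plan is to compute the Fourier coefficients of the indicator $v$ explicitly in the trigonometric basis \eqref{eqBasisTri}. Then I will show that $\tau_J(v)=\sum_{j=1}^J \g_j^{-1}\langle v,\phi_j\rangle^2$ grows exactly like $J^{a-1}$, which gives $J/\tau_J(v)\asymp J^{-(a-2)}$. The statement reuses the letter $a$ for both the decay exponent and the left endpoint, so in the argument I will write the interval as $[a_0,b_0]$ with $l=b_0-a_0\in(0,1)$ and reserve $a$ for the exponent in $\g_j\asymp j^{-a}$.

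\textbf{Step 1 (coefficients).} Directly integrating gives $\langle v,\phi_{\mathrm{tri},1}\rangle=l$. For $m\in\N$,
\begin{align*}
\langle v,\phi_{\mathrm{tri},2m}\rangle=\frac{\sqrt2\{\cos(2m\pi a_0)-\cos(2m\pi b_0)\}}{2m\pi},\qquad
\langle v,\phi_{\mathrm{tri},2m+1}\rangle=\frac{\sqrt2\{\sin(2m\pi b_0)-\sin(2m\pi a_0)\}}{2m\pi}.
\end{align*}
Summing the squares of each pair and using $(\cos x-\cos y)^2+(\sin y-\sin x)^2=2-2\cos(y-x)=4\sin^2\{(y-x)/2\}$, I expect
\begin{align*}
\langle v,\phi_{\mathrm{tri},2m}\rangle^2+\langle v,\phi_{\mathrm{tri},2m+1}\rangle^2=\frac{2\sin^2(m\pi l)}{m^2\pi^2}.
\end{align*}

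\textbf{Step 2 (reduction).} Since $\g_{2m}\asymp\g_{2m+1}\asymp m^{-a}$, grouping indices in pairs up to $M=\lfloor (J-1)/2\rfloor\asymp J$ gives
\begin{align*}
\tau_J(v)\asymp 1+\sum_{m=1}^{M} m^{a-2}\sin^2(m\pi l),
\end{align*}
where a possible unpaired last index contributes only one term of the same order and is harmless. The upper bound $\tau_J(v)\lesssim \sum_{m\le M}m^{a-2}\asymp J^{a-1}$ is immediate because $a>2$.

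\textbf{Step 3 (lower bound; the main point).} The obstacle is that $\sin^2(m\pi l)$ may be small for individual $m$, so I cannot bound it below term by term. I will pair consecutive indices instead. From $\sin(\pi l)=\sin((m+1)\pi l)\cos(m\pi l)-\cos((m+1)\pi l)\sin(m\pi l)$ it follows that $|\sin(m\pi l)|+|\sin((m+1)\pi l)|\ge|\sin(\pi l)|=:c_l>0$, which uses $0<l<1$. Hence $\sin^2(m\pi l)+\sin^2((m+1)\pi l)\ge c_l^2/2$.

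Because $m^{a-2}$ is nondecreasing, each consecutive pair satisfies $m^{a-2}\sin^2(m\pi l)+(m+1)^{a-2}\sin^2((m+1)\pi l)\ge m^{a-2}c_l^2/2$. Summing over odd $m\le M-1$ then gives $\sum_{m\le M}m^{a-2}\sin^2(m\pi l)\gtrsim \sum_{m\le M,\,m\text{ odd}} m^{a-2}\gtrsim M^{a-1}\asymp J^{a-1}$.

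Combining Steps 2 and 3 yields $\tau_J(v)\asymp J^{a-1}$. Therefore $J/\tau_J(v)\asymp J^{-(a-2)}=o(1)$ since $a>2$, and in particular Condition~\ref{condProjScale} holds.
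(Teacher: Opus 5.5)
Your proposal is correct and follows essentially the same route as the paper. Both compute the explicit Fourier coefficients of the indicator, reduce $\tau_J(v)$ to $\sum_{m\le M} m^{a-2}\sin^2(m\pi l)\asymp J^{a-1}$, and get the lower bound by pairing consecutive indices $(2r-1,2r)$ and using that $m^{a-2}$ is nondecreasing. The only difference is cosmetic: the paper bounds each pair via $\sin^2 x+\sin^2(x+y)=1-\cos(2x+y)\cos y\ge 1-|\cos(\pi l)|$, whereas you use the sine subtraction formula to get $|\sin(m\pi l)|+|\sin((m+1)\pi l)|\ge \sin(\pi l)$; your renaming of the interval endpoints to avoid the clash with the exponent $a$ is a welcome clarification.
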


\begin{proof}
	We first note that
	\begin{align*}
		\langle v, \hat{\phi}_{\mathrm{tri},2m} \rangle
		& = \int_a^b \sqrt{2} \sin (2m\pi u) du
		= {1 \over \sqrt{2} m \pi} \{\cos (2m\pi a) - \cos(2m\pi b)\}, \\
		\langle v, \hat{\phi}_{\mathrm{tri},2m+1} \rangle
		& \int_a^b \sqrt{2} \cos (2m \pi u) du
		= {1 \over \sqrt{2} m \pi} \{\sin (2m\pi b) - \sin(2m\pi a)\}, 
	\end{align*}
	implying
	\begin{align*}
		& \langle v, \hat{\phi}_{\mathrm{tri},2m} \rangle^2
		+ \langle v, \hat{\phi}_{\mathrm{tri},2m+1} \rangle^2
		\\& = {1 \over 2\pi^2 m^2} [2 - 2 \{\cos(2m\pi a) \cos(2m\pi b) + \sin(2m\pi a) \sin(2m\pi a)\}]
		\\& = {1 \over \pi^2 m^2} \{1-\cos(2m\pi l)\}.
		= {2 \over \pi^2 m^2} \sin^2(m\pi l).
	\end{align*}
	This leads us to studying the sum $\ka^p(M) \equiv \sum_{m=1}^M m^p \sin^2(m\pi l)$
	for a real number $p>-1$.
	We claim that
	\begin{align} \label{eq_lemProjIndicator}
		\ka^p(M) \asymp M^{p+1},
	\end{align} 
	where its derivation is deferred later. 
	Then, the claim in \eqref{eq_lemProjIndicator} implies that  $\tau_J(v) \asymp J^{a-1}$, 
	where the latter derives the desired result.

	For any real numbers $x$ and $y$, 
	using various trigonometric identities,
	we observe that
	\begin{align*}
		\sin^2(x) + \sin^2(x+y)
		& = {1- \cos(2x) \over 2} + {1-\cos(2x+2y) \over 2}
		= 1-\cos(2x+y)\cos(y) 
		\\& \geq 1-|\cos(y)|,
	\end{align*}
	where the last inequality simply comes from the fact that $|\cos(2x+y)| \leq 1$. 
	For $r \in \N$, 
	by putting $x = \pi(2r-1)l$ and $y = \pi l$,
	we obtain
	\begin{align*}
		\sin^2(\pi(2r-1)l) + \sin^2(2\pi r l) \geq c_l \equiv 1-|\cos(\pi l)| > 0. 
	\end{align*}
	This yields a lower bound for $\ka(M)$ as
	\begin{align*}
		\ka(M)
		& \geq \sum_{r=1}^{\lfloor M/2 \rfloor} \left\{ (2r-1)^p \sin^2 (\pi(2r-1)l) + (2r)^p \sin^2(2\pi rl) \right\}
		\\& \geq c_l \sum_{r=1}^{\lfloor M/2 \rfloor} (2r-1)^p
		\asymp M^p+1. 
	\end{align*}
	Therefore, the claim in \eqref{eq_lemProjIndicator} is proved,
	because $\ka^p(M) \leq \sum_{m=1}^M m^p \asymp M^{p+1}$.
\end{proof}

\section{Additional discussion about theoretical results}
\label{secAddDiscuss}

In light of comments by an anonymous reviewer, 
we expand our discussion below on several key theoretical points, 
which help to deepen our insights into the problem.

\subsection{Relationship with prediction error}
\label{ssecPE}

A substantive point is that our result changes the usual perspective on FPCR-based functional regression inference, 
and our operator scaling is clearly distinct from the role of square-root covariance operator appearing in prediction error.

To clarify this distinction, consider prediction under the SoFR framework. Let $X_0$ be a new predictor and
$Y_0 = \mu(X_0) + \e_0$ 
be a new response at $X_0$
where the error $\e_0$ independent of the data $\dd_n \equiv \{(X_i,Y_i)\}_{i=1}^n$.
The (conditional) mean function is 
\begin{align*}
	\mu(x) 
	\equiv \eo[Y|X=x] 
	= \ap + \langle \beta, x \rangle
	= \eo[Y] + \langle \beta, x-\eo[X] \rangle.
\end{align*}
Given an estimator $\hat{\beta}$ for $\beta$,
a natural estimated mean function $\hat{\mu}$ is defined by $\hat{\mu}(x) = \bar{Y} + \hat{\beta}(x-\bar{X})$,
which produces the predicted value $\hat{Y}_0 \equiv \hat{\mu}(X_0)$ at $X_0$.
Conditional on the data $\dd_n$, the prediction error satisfies
\begin{align*}
	\eo[(\hat{Y}_0-Y_0)^2|\dd_n]
	& = \eo[
	\{(\bar{Y} + \langle \hat{\beta}, X_0-\bar{X}\rangle) 
	- (\eo[Y] + \langle \beta, X_0-\eo[X] \rangle + \e_0)\}^2
	|\dd_n]
	\\& = \eo [
	(\bar{Y}-\eo[Y] + \langle \hat{\beta}-\beta, X_0-\eo[X] \rangle - \langle \beta, \bar{X}-\eo[X] \rangle + \e_0)^2
	|\dd_n ]
	\\& \leq 4(\bar{Y}-\eo[Y])^2 
	+ 4\langle \beta, \bar{X}-\eo[X] \rangle^2
	+ 4\e_0^2
	+ 4\eo[\langle \hat{\beta}-\beta, X_0-\eo[X] \rangle^2|\dd_n].
\end{align*}
Thus, apart from the noise and mean-estimation terms,
the slope-estimation contribution to prediction is governed by
\begin{align*}
	\eo[\langle \hat{\beta}-\beta, X_0-\eo[X] \rangle^2|\dd_n]
	& = \eo[\langle (X_0-\eo[X])^{\otimes 2} (\hat{\beta}-\beta), \hat{\beta}-\beta \rangle|\dd_n]
	\\& = \langle \eo[(X_0 - \eo[X])^{\otimes 2}] (\hat{\beta}-\beta), \hat{\beta}-\beta \rangle.
\end{align*}
When $X_0$ has the same distribution as the observed predictors,
we have $\var[X_0] = \eo[(X_0 - \eo[X])^{\otimes 2}] = \ga,$
and hence this term becomes 
\begin{align*}
	\eo[\langle \hat{\beta}-\beta, X_0-\eo[X] \rangle^2|\dd_n]
	& = \langle \ga (\hat{\beta}-\beta), \hat{\beta}-\beta \rangle
	\\& = \|\ga^{1/2}(\hat{\beta}-\beta)\|^2
\end{align*}
Technically, prediction does not use ``operator scaling'' as a deliberate device for constructing the predictor. 
Rather, the operator $\ga^{1/2}$ appears because the prediction error naturally induces the $\ga^{1/2}$-weighted norm. 
This is different from our use of the scaling $\widehat{\ga}_J^{1/2}$ for inference,
which not only stabilizes the limiting behavior of the statistic but also completely removes the bias terms from the statistic. 
Moreover, if $X_0$ is not identically distributed as $X_1$, 
then the norm induced by the prediction error involves $\var[X_0]^{1/2}$, 
which may differ from $\ga^{1/2}$.

By contrast, inference necessarily involves centering and scaling. 
As the simplest example, the sample mean has a non-degenerate limiting distribution only after scaling by $\sqrt{n}$.
In more complex problems, the appropriate scaling can be slower than $\sqrt{n}$, such as $\sqrt{nh}$ with bandwidth $h\to 0$ in local linear regression, or $n^{1/3}$ in shape-constrained problems. 
For the FPCR estimator $\hat{\beta}_J$, however, one does not generally obtain a non-degenerate CLT under any scalar scaling \citep{CMS07}. 
To obtain meaningful distributional approximations for $\hat{\beta}_J$, 
we therefore introduce the operator scaling $\widehat{\ga}_J^{1/2}$. 
As described in the Introduction, this operator scaling leads to simple and effective inference tools for SoFR 
and can be extended to other regression problems where FPCR-type estimators are used.
This is distinct from prediction, 
where $\ga^{1/2}$ arises naturally from the prediction error itself, rather than being introduced as a device to improve prediction.

Numerically, these two statistical tasks need not be aligned.
For instance, one may choose an estimator or tuning rule that minimizes prediction error, for example through cross-validation. 
However, our numerical studies suggest that such a choice does not necessarily lead to reliable inference in the settings considered here. 
At the same time, prediction error still appears in the theoretical analysis of bootstrap consistency: 
as shown in Appendix~\ref{appTechMain1}, the Wasserstein distance between the bootstrap and true error distributions is controlled by a term involving the prediction error. 
Thus, while prediction performance and inferential validity may differ substantially in practice, prediction error naturally enters the argument for the proposed bootstrap approximation.

\subsection{Inference stability under exponential decay rates}
\label{ssecExpDecay}

Suppose that the eigen-gaps decay exponentially, say $\dt_j \asymp e^{-dj}$, 
as is the case when exponentially decaying eigenvalues are sufficiently well separated. 
Then the last term in Condition~(C4) becomes
\begin{align*}
	n^{-1}\sum_{j=1}^J \dt_j^{-2}
	\asymp
	n^{-1}\sum_{j=1}^J e^{2dj}
	\asymp
	n^{-1}e^{2dJ}.
\end{align*}
Thus, Condition~(C4) requires
\[
e^{2dJ}/n=o(1),
\]
or equivalently $2dJ-\log n\to -\infty$. 
This illustrates that exponential decay imposes a stringent stability requirement on the truncation level, because the inverse-eigen-gap factors grow exponentially fast.

Recall also that the empirical prediction error appearing in the upper bound  $\mathsf{W}^X(Q, \widehat{Q})$ for the Wasserstein distance from the bootstrap statistic satisfies
\begin{align*}
	\|\hat{\ga}^{1/2}(\hat{\beta}_J-\beta)\|^2
	= O_\pr(n^{-1/2})
	+ O_\pr(J/n)
	+ O\left(\sum_{j>J}\g_j\langle \beta,\phi_j\rangle^2\right)
\end{align*}
(cf.~\autoref{appTechMain1}). 
Under the above stability requirement, the variance term $J/n$ is negligible; in particular, Condition~(C4) implies that $J/n=o(1)$. 
However, the bias term can still be sensitive to the choice of $J$. 
For instance, if the tail term behaves like $e^{-J}$, then minimizing the unconstrained upper bound $J/n+e^{-J}$ suggests a choice of order $J\asymp \log n$. 
Such a choice may be incompatible with Condition~(C4): for the canonical choice $J \asymp \log n$, we have
\[
n^{-1}e^{2dJ} \asymp n^{2d-1},
\]
which is not negligible when $d\ge 1/2$. 
More generally, if the tail term behaves like $e^{-\eta J}$, then the unconstrained choice is $J\asymp \eta^{-1}\log n$, and compatibility with Condition~(C4) requires $\eta>2d$, up to constants. 
Therefore, when the eigen-gaps decay exponentially, Condition~(C4) can impose a stronger upper bound on the admissible truncation level $J$ 
than the one suggested by minimizing the prediction-error upper bound. 
Thus, bootstrap validity may require choosing $J$ with inference stability in mind, 
rather than using the unconstrained prediction-error-optimal choice. 
When the coefficients of $\beta$ decay sufficiently rapidly, this restriction may be less consequential, 
since the bias term can already be small for a more conservative choice of $J$.

\subsection{Further discussion of projection inference} \label{ssecFurtherDiscussionProjInfer}

We briefly discuss the existing literature on projection inference in functional regression. 
To our knowledge, the only papers in this direction are \cite{YDN23RB, CMS07, GM11, KH16a}. 
First, all of these works assume that $X$ and $Y$ have mean zero.
Moreover, the latter three do not account for centering by the sample means $\bar{X}$ and $\bar{Y}$, 
which may limit the practical relevance of their theoretical results. 
The results in \cite{YDN23RB} and \citet[Theorem~2]{CMS07} apply only to the case of random $v$, 
and therefore are not directly comparable to the results in \cite{GM11, KH16a} and \cite[Theorem~3]{CMS07}, nor to our result in \autoref{thmProj}, all of which consider non-random directions $v$. 
Most importantly, however, none of the existing works for non-random $v$ provides a justification for the case in which the direction $v$ is less smooth than $X$, 
a feature captured in our framework by Condition~\ref{condProjScale}.

\section{Extra simulation results} \label{appExtraSim}

\begin{table}[htbp]
	\centering
	\caption{
		Average computation times (seconds) of proposed versus benchmark tests over 100 replications.
		Our function \texttt{FPCR} simultaneously computes both the \(L^{2}\) and supremum norms in \eqref{eqStatTest},
		using candidate truncation levels $\jj = \{1, \dots, 49\}$, 
		and provides the p-values for all threshold levels \(\rho \in \{0.75, 0.85, 0.95\}\).
	}
	\label{tb_computing}
	\renewcommand{\arraystretch}{1.2}
	%	\resizebox{0.95\linewidth}{!} 
	\medskip
	{
		\begin{tabular}{c|ccccc}
			\hline\hline
			& FPCR    & HMV13 & Lei14  & SC15 & LL25   \\ \hline
			$n=50$ & 175.328 & 1.294 & 1.130  & 11.123 & 234.46 \\
			$n=200$ & 344.008 & 4.374 & 2.086 & 407.483 & 1225.114\\
			\hline\hline
		\end{tabular}
	}
\end{table}

\begin{figure}[htbp]
	\centering
	\includegraphics[width=0.99\linewidth]{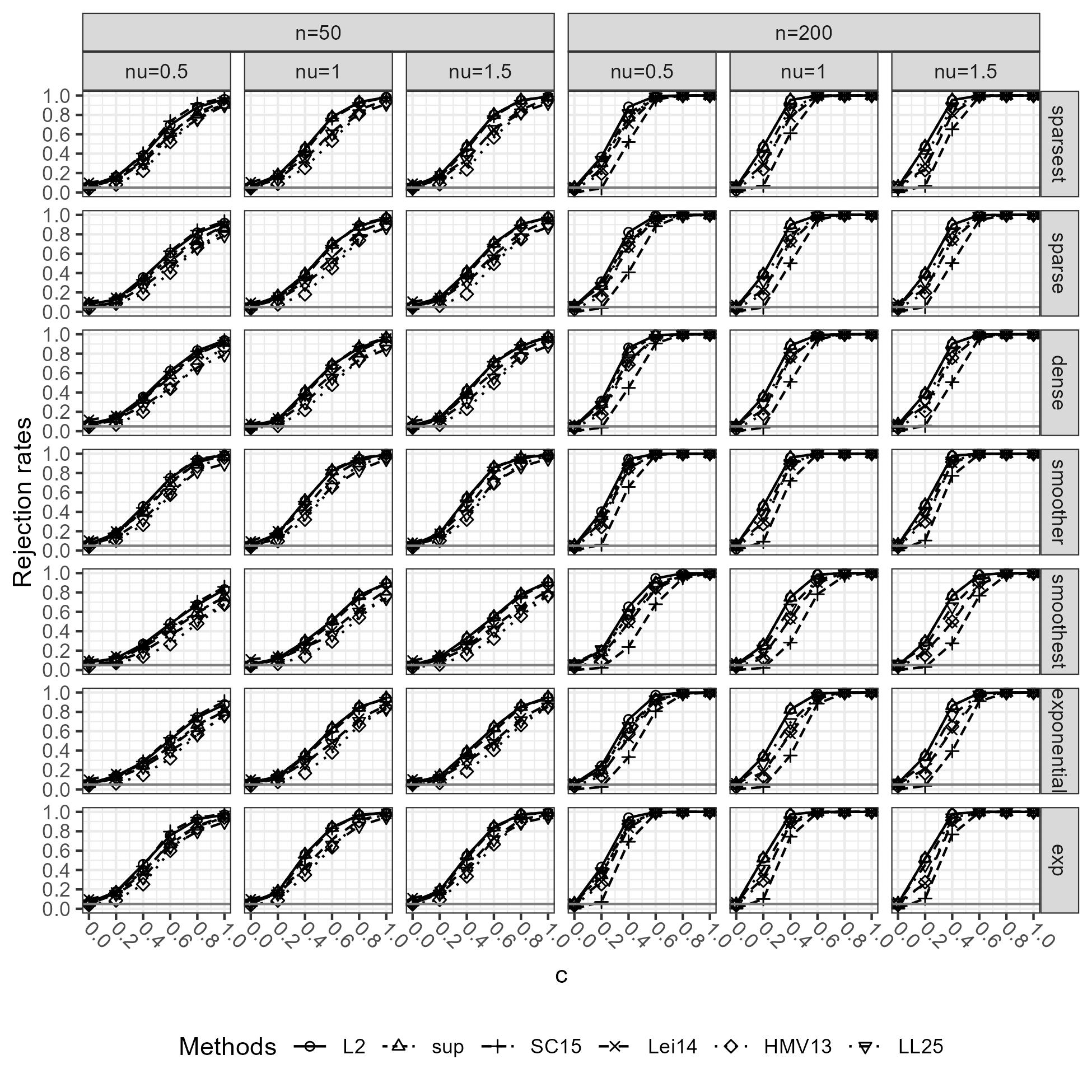}
	\caption{
		Extra results of the empirical rejection rates of the bootstrap tests using statistics  $S_{\mathrm{sq},J}$ and $S_{\mathrm{sup},J}$ in \eqref{eqStatTest} when $J$ is chosen by the FVE in \eqref{eqFVE} with $\rho=0.75$ and the benchmark tests.
	}
	\label{fig_test1sofr_appendix}
\end{figure}

\begin{figure}[htbp]
	\centering
	\includegraphics[width=0.99\linewidth]{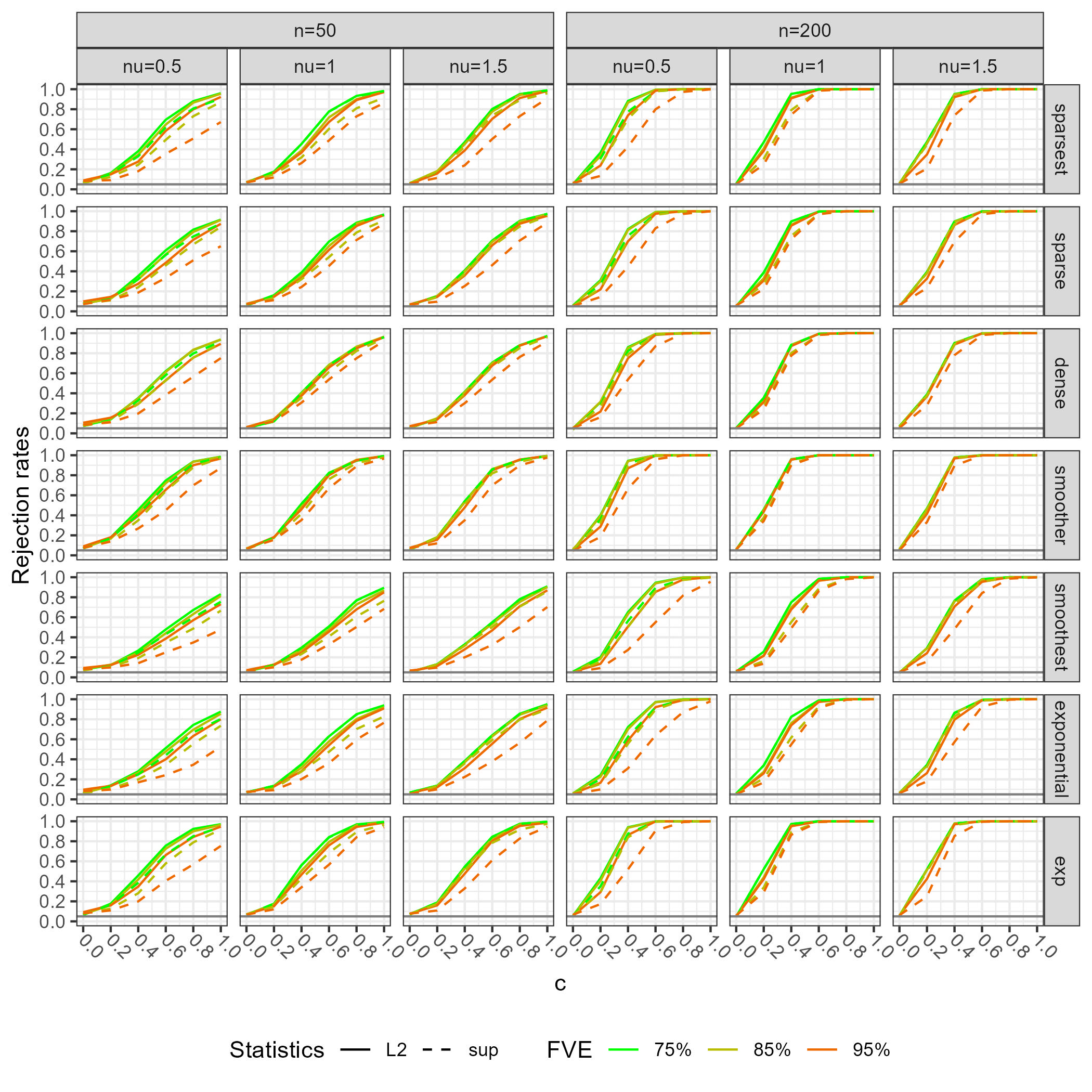}
	\caption{
		Extra results of the empirical rejection rates of the bootstrap tests using statistics  $S_{\mathrm{sq},J}$ and $S_{\mathrm{sup},J}$ in \eqref{eqStatTest} using various threshold values $\rho \in \{0.75, 0.85, 0.95\}$ for the FVE criterion in \eqref{eqFVE}. 
	}
	\label{fig_test1sofr_FVE}
\end{figure}

\end{appendix}

%%%%%%%%%%%%%%%%%%%%%%%%%%%%%%%%%%%%%%%%%%%%%%
%% Support information, if any,             %%
%% should be provided in the                %%
%% Acknowledgements section.                %%
%%%%%%%%%%%%%%%%%%%%%%%%%%%%%%%%%%%%%%%%%%%%%%
\section*{Acknowledgments}
%The author gratefully acknowledges a reviewer’s thoughtful feedback, particularly the suggestions to broaden the simulation scenarios and to expand the discussion on selecting truncation levels. 
The author thanks the authors of \cite{LL25} for kindly sharing their code,
which significantly helps implementing the benchmark tests.
%This work was supported in part by startup funds from Kent State University.
Part of this work was carried out while the author was at Kent State University. 
\clearpage
\bibliographystyle{dcu}
\bibliography{test1sofr}

@book{Joll86,
	title        = {Principal Component Analysis},
	author       = {Jolliffe, Ian T.},
	edition      = {1},
	series       = {Springer Series in Statistics},
	publisher    = {Springer},
	address      = {New York, NY},
	year         = {1986}
}

@book{DS88,
  title={Linear Operators, Part 1: General Theory},
  author={Dunford, Nelson and Schwartz, Jacob T},
  volume={10},
  year={1988},
  publisher={John Wiley \& Sons}
}

@book{SLL99,
	title={Mathematical Handbook of Formulas and Tables},
  	author={Spiegel, Murray R and Lipschutz, Seymour and Liu, John},
	volume={65},
	year={1999},
	publisher={McGraw Hill}
}

@book{vaart98,
	title={Asymptotic Statistics},
	author={Van der Vaart, Aad W},
	year={1998},
	publisher={Cambridge University Press}
}

@book{Bosq00,
	title={Linear Processes in Function Spaces: Theory and Applications},
	author={Bosq, Denis},
	volume={149},
	year={2000},
	publisher={Springer Science \& Business Media}
}

@book{Evans10,
	title={Partial Differential Equations},
	author={Evans, Lawrence C},
	volume={19},
	year={2010},
	publisher={American Mathematical Society}
}

@book{HK12,
	title={Inference for Functional Data with Applications},
	author={Horv{\'a}th, Lajos and Kokoszka, Piotr},
	volume={200},
	year={2012},
	publisher={Springer Science \& Business Media}
}

@book{HE15,
	title={Theoretical Foundations of Functional Data Analysis, with an Introduction to Linear Operators},
	author={Hsing, Tailen and Eubank, Randall},
	volume={997},
	year={2015},
	publisher={John Wiley \& Sons}
}

@book{KR17,
	title={Introduction to Functional Data Analysis},
	author={Kokoszka, Piotr and Reimherr, Matthew},
	year={2017},
	publisher={Chapman and Hall/CRC}
}

@book{CGLC24,
	title={Functional Data Analysis with R},
	author={Crainiceanu, Ciprian M and Goldsmith, Jeff and Leroux, Andrew and Cui, Erjia},
	year={2024},
	publisher={CRC Press}
}

@article{Mallows72,
	title={A note on asymptotic joint normality},
	author={Mallows, Colin L},
	journal={Annals of Mathematical Statistics},
	pages={508--515},
	year={1972},
	publisher={JSTOR}
}

@article{free81,
	title={Bootstrapping regression models},
	author={Freedman, David A},
	journal={Annals of Statistics},
	volume={9},
	number={6},
	pages={1218--1228},
	year={1981},
	publisher={Institute of Mathematical Statistics}
}

@article{KF92,
	title={Bootstrapping stationary autoregressive moving-average models},
	author={Kreiss, Jens-Peter and Franke, J{\"u}rgen},
	journal={Journal of Time Series Analysis},
	volume={13},
	number={4},
	pages={297--317},
	year={1992},
	publisher={Wiley Online Library}
}

@article{Ball93,
	title={{The reverse isoperimetric problem for Gaussian measure}},
	author={Ball, Keith},
	journal={Discrete \& Computational Geometry},
	volume={10},
	number={4},
	pages={411--420},
	year={1993},
	publisher={Springer-Verlag Berlin, Heidelberg}
}

@article{CFS99,
  title={Functional linear model},
  author={Cardot, Herv{\'e} and Ferraty, Fr{\'e}d{\'e}ric and Sarda, Pascal},
  journal={Statistics \& Probability Letters},
  volume={45},
  number={1},
  pages={11--22},
  year={1999},
  publisher={Elsevier}
}

@article{KF00,
	title={Asymptotics for lasso-type estimators},
	author={Knight, Keith and Fu, Wenjiang},
	journal={Annals of Statistics},
	pages={1356--1378},
	year={2000},
	publisher={JSTOR}
}

@article{CFMS03,
	title={Testing hypotheses in the functional linear model},
	author={Cardot, Herv{\'e} and Ferraty, Fr{\'e}d{\'e}ric and Mas, Andr{\'e} and Sarda, Pascal},
	journal={Scandinavian Journal of Statistics},
	volume={30},
	number={1},
	pages={241--255},
	year={2003},
	publisher={Wiley Online Library}
}

@article{YMW05b,
	title={Functional linear regression analysis for longitudinal data},
	author={Yao, Fang and M{\"u}ller, Hans-Georg and Wang, Jane-Ling},
	journal={Annals of Statistics},
	pages={2873--2903},
	year={2005}
}

@article{CH06,
  title={Prediction in functional linear regression},
  author={Cai, T Tony and Hall, Peter},
  journal={Annals of Statistics},
  volume={34},
  number={5},
  pages={2159--2179},
  year={2006},
  publisher={Institute of Mathematical Statistics}
}

@article{HH07,
  title={Methodology and convergence rates for functional linear regression},
  author={Hall, Peter and Horowitz, Joel L},
  journal={Annals of Statistics},
  volume={35},
  number={1},
  pages={70--91},
  year={2007},
  publisher={Institute of Mathematical Statistics}
}

@article{CMS07,
	title={{CLT} in functional linear regression models},
	author={Cardot, Herv{\'e} and Mas, Andr{\'e} and Sarda, Pascal},
	journal={Probability Theory and Related Fields},
	volume={138},
	number={3-4},
	pages={325--361},
	year={2007},
	publisher={Springer}
}

@article{KMSZ08,
	title={Testing for lack of dependence in the functional linear model},
	author={Kokoszka, Piotr and Maslova, Inga and Sojka, Jan and Zhu, Lie},
	journal={Canadian Journal of Statistics},
	volume={36},
	number={2},
	pages={207--222},
	year={2008},
	publisher={Wiley Online Library}
}

@article{MY08,
	title={Functional additive models},
	author={M{\"u}ller, Hans-Georg and Yao, Fang},
	journal={Journal of the American Statistical Association},
	volume={103},
	number={484},
	pages={1534--1544},
	year={2008},
	publisher={Taylor \& Francis}
}

@article{YC10,
	title={A reproducing kernel {Hilbert}
         space approach to functional linear regression},
	author={Yuan, Ming and Cai, T Tony},
	journal={Annals of Statistics},
	volume={38},
	number={6},
	pages={3412-3444},
	year={2010},
	publisher={Institute of Mathematical Statistics}
}

@article{CL11,
	title={Bootstrapping lasso estimators},
	author={Chatterjee, Arindam and Lahiri, Soumendra Nath},
	journal={Journal of the American Statistical Association},
	volume={106},
	number={494},
	pages={608--625},
	year={2011},
	publisher={Taylor \& Francis}
}

@article{Ross11,
	title={{Fundamentals of Stein’s method}},
	author={Ross, Nathan},
	journal={Probability Surveys},
	volume={8},
	pages={210--293},
	year={2011}
}

@article{GM11,
  title={Bootstrap in functional linear regression},
  author={Gonz{\'a}lez-Manteiga, Wenceslao and Mart{\'\i}nez-Calvo, Adela},
  journal={Journal of Statistical Planning and Inference},
  volume={141},
  number={1},
  pages={453--461},
  year={2011},
  publisher={Elsevier}
}

@article{SL12,
	title={On prediction rate in partial functional linear regression},
	author={Shin, Hyejin and Lee, Myung Hee},
	journal={Journal of Multivariate Analysis},
	volume={103},
	number={1},
	pages={93--106},
	year={2012},
	publisher={Elsevier}
}

@article{DPZ12,
	author    = {Winston Wei Dou and David Pollard and Harrison H. Zhou},
	title     = {Estimation in functional regression for general exponential families},
	journal   = {Annals of Statistics},
	volume    = {40},
	number    = {5},
	pages     = {2421--2451},
	year      = {2012}
}

@article{HKR13,
	title={Estimation of the mean of functional time series and a two-sample problem},
	author={Horv{\'a}th, Lajos and Kokoszka, Piotr and Reeder, Ron},
	journal={Journal of the Royal Statistical Society Series B: Statistical Methodology},
	volume={75},
	number={1},
	pages={103--122},
	year={2013},
	publisher={Oxford University Press}
}

@article{UF13,
	title={{Applications of functional data analysis: A systematic review}},
	author={Ullah, Shahid and Finch, Caroline F},
	journal={BMC medical research methodology},
	volume={13},
	number={1},
	pages={43},
	year={2013},
	publisher={Springer}
}

@article{SGS13,
	title={An introduction with medical applications to functional data analysis},
	author={S{\o}rensen, Helle and Goldsmith, Jeff and Sangalli, Laura M},
	journal={Statistics in Medicine},
	volume={32},
	number={30},
	pages={5222--5240},
	year={2013},
	publisher={Wiley Online Library}
}

@article{HMV13,
	title={Minimax adaptive tests for the functional linear model},
	author={Hilgert, Nadine and Mas, Andr{\'e} and Verzelen, Nicolas},
	journal={Annals of Statistics},
	volume={41},
	number={2},
	pages={838--869},
	year={2013},
	publisher={Institute of Mathematical Statistics}
}

@article{CM13,
	title={Asymptotics of prediction in functional linear regression with functional outputs},
	author={Crambes, Christophe and Mas, Andr{\'e}},
	journal={Bernoulli},
	volume={19},
	number={5B},
	pages={2627--2651},
	year={2013},
	publisher={Bernoulli Society for Mathematical Statistics and Probability}
}

@article{lei14,
	title={Adaptive global testing for functional linear models},
	author={Lei, Jing},
	journal={Journal of the American Statistical Association},
	volume={109},
	number={506},
	pages={624--634},
	year={2014},
	publisher={Taylor \& Francis}
}

@article{Cuevas14,
	title={A partial overview of the theory of statistics with functional data},
	author={Cuevas, Antonio},
	journal={Journal of Statistical Planning and Inference},
	volume={147},
	pages={1--23},
	year={2014},
	publisher={Elsevier}
}

@article{SC15,
	title={Nonparametric inference in generalized functional linear models},
	author={Shang, Zuofeng and Cheng, Guang},
	journal={Annals of Statistics},
	pages={1742--1773},
	year={2015},
	publisher={JSTOR}
}

@article{KXYZ16,
	title={Partially functional linear regression in high dimensions},
	author={Kong, Dehan and Xue, Kaijie and Yao, Fang and Zhang, Hao H},
	journal={Biometrika},
	volume={103},
	number={1},
	pages={147--159},
	year={2016},
	publisher={Oxford University Press}
}

@article{KH16a,
  title={On asymptotic distribution of prediction in functional linear regression},
  author={Khademnoe, Omid and Hosseini-Nasab, S Mohammad E},
  journal={Statistics},
  volume={50},
  number={5},
  pages={974--990},
  year={2016a},
  publisher={Taylor \& Francis}
}

@article{KSM16,
	title={Classical testing in functional linear models},
	author={Kong, Dehan and Staicu, Ana-Maria and Maity, Arnab},
	journal={Journal of Nonparametric Statistics},
	volume={28},
	number={4},
	pages={813--838},
	year={2016},
	publisher={Taylor \& Francis}
}

@article{PCTM18,
	title={Singular additive models for function to function regression},
	author={Park, Byeong U and Chen, Chun-Jui and Tao, Wenwen and M{\"u}ller, Hans-Georg},
	journal={Statistica Sinica},
	volume={28},
	number={4},
	pages={2497--2520},
	year={2018},
	publisher={JSTOR}
}

@article{CR18,
	title={A geometric approach to confidence regions and bands for functional parameters},
	author={Choi, Hyunphil and Reimherr, Matthew},
	journal={Journal of the Royal Statistical Society Series B: Statistical Methodology},
	volume={80},
	number={1},
	pages={239--260},
	year={2018},
	publisher={Oxford University Press}
}

@article{DBZ17,
	title={High-dimensional simultaneous inference with the bootstrap},
	author={Dezeure, Ruben and B{\"u}hlmann, Peter and Zhang, Cun-Hui},
	journal={Test},
	volume={26},
	number={4},
	pages={685--719},
	year={2017},
	publisher={Springer}
}

@article{Eck18,
	title={Bootstrapping for multivariate linear regression models},
	author={Eck, Daniel J},
	journal={Statistics \& Probability Letters},
	volume={134},
	pages={141--149},
	year={2018},
	publisher={Elsevier}
}

@article{LY19,
	title={Intrinsic Riemannian functional data analysis},
	author={Lin, Zhenhua and Yao, Fang},
	journal={Annals of Statistics},
	volume={47},
	number={6},
	pages={3533--3577},
	year={2019}
}

@article{Raic19,
	author = {Martin Raič},
	title = {{A multivariate Berry–Esseen theorem with explicit constants}},
	volume = {25},
	journal = {Bernoulli},
	number = {4A},
	publisher = {Bernoulli Society for Mathematical Statistics and Probability},
	pages = {2824 -- 2853},
	year = {2019}
}

@article{LZ20,
	title={Inference for generalized partial functional linear regression},
	author={Li, Ting and Zhu, Zhongyi},
	journal={Statistica Sinica},
	volume={30},
	number={3},
	pages={1379--1397},
	year={2020},
	publisher={JSTOR}
}

@article{Bonis20,
	title={{Stein’s method for normal approximation in Wasserstein distances with application to the multivariate central limit theorem}},
	author={Bonis, Thomas},
	journal={Probability Theory and Related Fields},
	volume={178},
	number={3},
	pages={827--860},
	year={2020},
	publisher={Springer}
}

@article{Dai22,
	title={{Statistical inference on the Hilbert sphere with application to random densities}},
	author={Dai, Xiongtao},
	journal={Electronic Journal of Statistics},
	volume={16},
	number={1},
	pages={700--736},
	year={2022},
	publisher={The Institute of Mathematical Statistics and the Bernoulli Society}
}

@article{LL25,
	title={Hypothesis testing for functional linear models via bootstrapping},
	author={Lin, Yinan and Lin, Zhenhua},
	journal={Bernoulli},
	volume={31},
	number={4},
	pages={3309--3330},
	year={2025},
	publisher={Bernoulli Society for Mathematical Statistics and Probability}
}

@article{Yang25,
	title={{Continuity of Gaussian extreme distributions}},
	author={Yang, Lijian},
	journal={Statistics \& Probability Letters},
	volume={216},
	pages={110274},
	year={2025},
	publisher={Elsevier}
}

@article{KDD22,
	title={Statistical inference for the slope parameter in functional linear regression},
	author={Kutta, Tim and Dierickx, Gauthier and Dette, Holger},
	journal={Electronic Journal of Statistics},
	volume={16},
	number={2},
	pages={5980--6042},
	year={2022},
	publisher={The Institute of Mathematical Statistics and the Bernoulli Society}
}

@article{CLM23,
	title={Wasserstein regression},
	author={Chen, Yaqing and Lin, Zhenhua and M{\"u}ller, Hans-Georg},
	journal={Journal of the American Statistical Association},
	volume={118},
	number={542},
	pages={869--882},
	year={2023},
	publisher={Taylor \& Francis}
}

@article{CKR23,
  	title={Inference for projection parameters in linear regression: beyond d=o(n\textsuperscript{1/2})},
	author={Chang, Woonyoung and Kuchibhotla, Arun Kumar and Rinaldo, Alessandro},
	journal={arXiv preprint arXiv:2307.00795},
	year={2023}
}

@article{LLM23,
	title={{High-dimensional MANOVA via bootstrapping and its application to functional and sparse count data}},
	author={Lin, Zhenhua and Lopes, Miles E and M{\"u}ller, Hans-Georg},
	journal={Journal of the American Statistical Association},
	volume={118},
	number={541},
	pages={177--191},
	year={2023},
	publisher={Taylor \& Francis}
}

@article{ZYZ23,
	title={Functional linear regression for discretely observed data: from ideal to reality},
	author={Zhou, Hang and Yao, Fang and Zhang, Huiming},
	journal={Biometrika},
	volume={110},
	number={2},
	pages={381--393},
	year={2023},
	publisher={Oxford University Press}
}

@article{KH24,
	title={On the validity of the bootstrap hypothesis testing in functional linear regression},
	author={Khademnoe, Omid and Hosseini-Nasab, S Mohammad E},
	journal={Statistical Papers},
	volume={65},
	number={4},
	pages={2361--2396},
	year={2024},
	publisher={Springer}
}

@article{DT24,
	title={Statistical inference for function-on-function linear regression},
	author={Dette, Holger and Tang, Jiajun},
	journal={Bernoulli},
	volume={30},
	number={1},
	pages={304--331},
	year={2024},
	publisher={Bernoulli Society for Mathematical Statistics and Probability}
}

@article{Warm24,
	title={{Over 30 years of using functional data analysis in human movement. What do we know, and is there more for sports biomechanics to learn?}},
	author={Warmenhoven, John},
	journal={Sports Biomechanics},
	pages={1--32},
	year={2024},
	publisher={Taylor \& Francis}
}

@article{ST25,
	title={{Operator learning: A statistical perspective}},
	author={Subedi, Unique and Tewari, Ambuj},
	journal={Annual Review of Statistics and Its Application},
	volume={13},
	year={2025},
	publisher={Annual Reviews}
}

@article{CCYZ25,
	title={High-dimensional multiresponse partially functional linear regression},
	author={Cai, Xiong and Cao, Jiguo and Yan, Xingyu and Zhao, Peng},
	journal={Statistics in Medicine},
	volume={44},
	number={13-14},
	pages={e70140},
	year={2025},
	publisher={Wiley Online Library}
}

@article{CP25,
	title={{High-dimensional Hilbert--Schmidt linear regression with Hilbert manifold variables}},
	author={Choi, Changwon and Park, Byeong U},
	journal={Annals of Statistics},
	volume={53},
	number={6},
	pages={2673--2701},
	year={2025},
	publisher={Institute of Mathematical Statistics}
}

@article{PRRP26,
	title={{PCA for point processes}},
	author={Picard, Franck and Rivoirard, Vincent and Roche, Angelina and Panaretos, Victor M},
	journal={Annals of Statistics},
	volume={54},
	number={2},
	pages={929--956},
	year={2026},
	publisher={Institute of Mathematical Statistics}
}

@article{SP26,
	title={{Statistical inference for Bures-Wasserstein flows}},
	author={Santoro, Leonardo V and Panaretos, Victor M},
	journal={To appear in Bernoulli},
	year={2026+}
}

@article{YDN23RB,
	author = {Hyemin Yeon and Xiongtao Dai and Daniel J. Nordman},
	title = {{Bootstrap inference in functional linear regression models with scalar response}},
	volume = {29},
	journal = {Bernoulli},
	number = {4},
	publisher = {Bernoulli Society for Mathematical Statistics and Probability},
	pages = {2599 -- 2626},
	year = {2023}
}

@article{YDN25WB,
	title={Wild bootstrap for mean response inference in functional linear regression models},
	author={Yeon, Hyemin and Dai, Xiongtao and Nordman, Daniel J},
	year={2026+},
	journal={Under revision}
}

@article{Yeon26,
	title = {Inference for function-on-function regression: central limit theorem and residual bootstrap},
	author = {Yeon, Hyemin},
	journal = {To appear in Statistics and Its Interface},
	year = {2026+}
}

@article{KKD24,
	title={Generalized linear models with spatial dependence and a functional covariate},
	author={Kim, Sooran and Kaiser, Mark S and Dai, Xiongtao},
	journal={arXiv preprint arXiv:2402.13472},
	year={2024}
}

@article{WCM16,
	title={Functional data analysis},
	author={Wang, Jane-Ling and Chiou, Jeng-Min 
                        and M{\"u}ller, Hans-Georg},
	journal={Annual Review of Statistics and its application},
	volume={3},
	number={1},
	pages={257--295},
	year={2016},
	publisher={Annual Reviews}
}

@article{Morr15,
	title={Functional regression},
	author={Morris, Jeffrey S},
	journal={Annual Review of Statistics and Its Application},
	volume={2},
	number={1},
	pages={321--359},
	year={2015},
	publisher={Annual Reviews}
}

@article{GRLG24,
	title={{Functional data analysis: An introduction and recent developments}},
	author={Gertheiss, Jan and R{\"u}gamer, David and Liew, Bernard XW and Greven, Sonja},
	journal={Biometrical Journal},
	volume={66},
	number={7},
	pages={e202300363},
	year={2024},
	publisher={Wiley Online Library}
}

@article{AD25,
	title={Functional data analysis for wearable sensor data: a systematic review},
	author={Acar-Denizli, Nihan and Delicado, Pedro},
	journal={AStA Advances in Statistical Analysis},
	pages={1--41},
	year={2025},
	publisher={Springer}
}

@article{OOO25,
	title={Functional data analysis applications in medicine: a systematic review},
	author={Orozco, Nicolas and Ortiz, Santiago and Ospina-Tasc{\'o}n, Gustavo A},
	journal={Wiley Interdisciplinary Reviews: Computational Statistics},
	volume={17},
	number={2},
	pages={e70026},
	year={2025},
	publisher={Wiley Online Library}
}

@article{LS26,
	title={{Spectrum-aware debiasing: A modern inference framework with applications to principal components regression}},
	author={Li, Yufan and Sur, Pragya},
	journal={Annals of Statistics},
	volume={54},
	number={2},
	pages={745--770},
	year={2026},
	publisher={Institute of Mathematical Statistics}
}

\end{document}